\begin{document}

\title{Pointed Hopf algebras of dimension $p^2q$ in characteristic $p$}
\author{Rongchuan Xiong}
\address{Department of Mathematics, Changzhou University, Changzhou 213164, China}
\email{rcxiong@foxmail.com}
\makeatletter
\@namedef{subjclassname@2020}{\textup{2020} Mathematics Subject Classification}
\makeatother

\subjclass[2020]{16T05, 16S35, 18D10}
\date{}
\maketitle

\newcommand{\tabincell}[2]{\begin{tabular}{@{}#1@{}}#2\end{tabular}}
\newtheorem{question}{Question}
\newtheorem{defi}{Definition}[section]
\newtheorem{conj}{Conjecture}
\newtheorem{thm}[defi]{Theorem}
\newtheorem{lem}[defi]{Lemma}
\newtheorem{pro}[defi]{Proposition}
\newtheorem{cor}[defi]{Corollary}
\newtheorem{rmk}[defi]{Remark}
\newtheorem{example}{Example}[section]

\newcommand{\AdL}{\text{ad}_L\,}
\newcommand{\AdR}{\text{ad}_R\,}
\newcommand{\K}{\mathds{k}}
\newcommand{\A}{\mathcal{A}}
\newcommand{\C}{\mathcal{C}}
\newcommand{\M}{\mathcal{M}}
\newcommand{\E}{\mathcal{E}}
\newcommand{\D}{\mathcal{D}}
\newcommand{\X}{\mathcal{X}}
\newcommand{\G}{\mathbf{G}}
\newcommand{\Z}{\mathbb{Z}}
\newcommand{\cZ}{\mathcal{Z}}
\newcommand{\I}{\mathbb{I}}
\newcommand{\cI}{\mathcal{I}}
\newcommand{\J}{\mathcal{J}}
\newcommand{\BN}{\mathcal{B}}
\newcommand{\Lam}{\lambda}
\newcommand{\Ome}{\omega}
\newcommand{\HYD}{{}^{H}_{H}\mathcal{YD}}
\newcommand{\As}{^\ast}
\newcommand{\N}{\mathds{N}}
\newcommand{\dH}{\mathds{H}}
\newcommand{\Pp}{\mathcal{P}}
\newcommand{\fH}{\mathfrak{H}}
\newcommand{\bH}{\mathbb{H}}
\newcommand{\fA}{\mathfrak{A}}
\newcommand{\cF}{\mathcal{F}}

\newcommand\ad{\operatorname{ad}}
\newcommand\Ob{\operatorname{Ob}}
 \newcommand\Aut{\operatorname{Aut}}
 \newcommand\coker{\operatorname{coker}}
 \newcommand\Ker{\operatorname{Ker}}
\newcommand\im{\operatorname{Im}}
\newcommand\co{\operatorname{co}}
\newcommand\Der{\operatorname{Der}}
\newcommand\diag{\operatorname{diag}}
\newcommand\End{\operatorname{End}}
\newcommand\id{\operatorname{id}}
\newcommand\Char{\operatorname{char}}
\newcommand\gr{\operatorname{gr}}
\newcommand\GK{\operatorname{GKdim}}
\newcommand{\Hom}{\operatorname{Hom}}
\newcommand\ord{\operatorname{ord}}
\newcommand\cR{\mathcal{R}}
\newcommand\cH{\mathcal{H}}

\begin{abstract}
Let $\mathds{k}$ be an algebraically closed field of characteristic $p$. We give the complete classification of pointed Hopf algebras over  $\mathds{k}$ of dimension $p^2q$ for a prime number $q$. The result shows that there are  finitely many isomorphism classes, including 10 classes that are not generated by group-like elements and skew-primitive elements.  In particular, there are many  new examples of  finite-dimensional pointed Hopf algebras.

\bigskip
\noindent {\bf Keywords:}  Pointed Hopf algebra; Restricted universal enveloping algebra; Nichols algebra; Hochschild cohomology.
\end{abstract}
\section{Introduction}
Let $\mathds{k}$ be  an algebraically closed field of characteristic $p$. This work is devoted to the classification of pointed Hopf algebras over $\K$ of a given dimension.  In general, it is a hard and challenging question even for too many small dimensions. Until now, the complete classifications have been done only for connected Hopf algebras of dimension $p^2$ \cite{W1}, $p^3$ \cite{NWW1,NWW2} and pointed ones of dimension $p^2$ \cite{WW}, $8$ \cite{NW}.

There are some results with some properties. G. Henderson  classified cocommutative  connected Hopf algebras  of dimension $p^2$ and $p^3$ \cite{Hen95};  S. Scherotzke classified rank one pointed Hopf algebras that are generated by the first term of the coalgebra filtration \cite{S}; The author classified non-connected pointed Hopf algebras of dimension $16$ that are generated by group-like elements and skew-primitive elements \cite{X23}; N. Hu et al. constructed   examples of pointed Hopf algebras of dimension $p^n$   \cite{HW07,HW11,TH16,THW15};  C. Cibils et al. \cite{CLW} and N. Andruskiewitsch et al.  \cite{AAH19,ABFF,AP} gave examples of those whose diagrams are Nichols algebras of non-diagonal type.   We mention that  the classification of Hopf algebras of some prime dimensions has been completed in \cite{NW18}.

 Now we  study the classification of pointed Hopf algebras over $\K$ of dimension $p^2m$ with $p\nmid m$.  N. Andruskiewitsch and S. Natale \cite{AN01} classified all pointed Hopf algebras of dimension $p^2q$ in characteristic zero. The results showed that there are $4(p-1)$ isomorphism classes and all are generated by group-like elements and skew-primitive elements. As far as we know, it is still an open question to classify those in characteristic zero when $m$ is not a prime number, see e.g. \cite{BG13} for more details.

Our strategy follows  the  Lifting Method \cite{AS98b} and the Hochschild cohomology of coalgebras \cite{SO}, which are proposed to classify pointed Hopf algebras in characteristic zero of dimension $p^3$ and  subsequently used in a lot of papers. Let $H$ be a pointed Hopf algebra and $\gr H$ be the graded Hopf algebra of $H$ associated to the coradical filtration. Then $\gr H\cong R\sharp H_0$ \cite[Theorem 2]{R85}, where $R$ is a strictly graded Hopf algebra in ${}^{H_0}_{H_0}\mathcal{YD}$. The subalgebra of $R$ generated by the space $R(1)$ is the so-called Nichols algebra $\BN(R(1))$ (also called quantum shuffle algebra \cite{Rosso}), which plays a key role in the classification of pointed complex Hopf algebras under the  following
\begin{conj}\label{conj1}   \cite[Conjecture 2.7]{AS02}
Assume that the base field $\K$ has characteristic zero. Any finite-dimensional connected graded braided Hopf algebra $R=\oplus_{i\geq 0}R(i)$ in ${}^{H_0}_{H_0}\mathcal{YD}$
satisfying $\Pp(R)=R(1)$ is generated by $R(1)$.
\end{conj}
As well-known,  Conjecture \ref{conj1}   is true for abelian
groups in characteristic zero \cite{AI19} but it fails in positive characteristic, see e.g. \cite[Example 2.5]{AS02}. Indeed, there are infinitely many examples of pointed Hopf algebras over $\K$ of dimension $p^n$ whose diagrams are not Nichols algebras, see e.g. \cite{W1,NWW1,NWW2,NW, X22}. The
second Hochschild cohomology groups of coalgebras can control the coproducts of the  generators which are not group-like elements and skew-primitive elements, see e.g. \cite[Theorem 2.7]{NW}.

 In this paper, we first classify coradically graded pointed Hopf algebras of a given dimension, including classes that are not generated by group-like elements and skew-primitive elements, then compute the liftings and finally determine the isomorphism classes. The lifting procedure is highly non-trivial and  computationally challenging especially when  the diagrams $R$ are not Nichols algebras or the relations of $R(1)\sharp H_0$ admit non-trivial deformations. Our main result is

 \begin{thm}[Theorem \ref{thm:dimp2q-completeclassification}]
Let $H$ be  a pointed Hopf algebra over $\K$ of dimension $p^2q$. If $H$ is generated by group-like elements and skew-primitive elements, then $H$ is one of the algebras listed in Propositions \ref{pro:p2q-1} and \ref{pro:p2q-rank-2-liftings}; otherwise $H$ is one of the algebras listed in Proposition \ref{pro:p2q-2}.
\end{thm}

The result shows that there are  finitely many classes of pointed Hopf algebras over $\K$ of dimension $p^2q$. The Hopf algebras listed in Proposition \ref{pro:p2q-1} are rank-one pointed Hopf algebras of the second type and the third type in \cite{S}. In  Proposition \ref{pro:p2q-rank-2-liftings},  except for the classes described in  $(27)$--$(28)$, $(33)$ and $(40)$, they are Radford biproducts of restricted universal enveloping algebras of dimension $p^2$ by $\K[\Z_q]$. We mention that restricted universal enveloping algebras may be not Hopf subalgebras of $H$. Contrary to the situation in characteristic zero \cite{AN01},  there are 10 classes that are not generated by group-like elements and skew-primitive elements listed in Proposition \ref{pro:p2q-2}.
See Sec. \ref{subsec:p2q} for details.

Besides, we also give the complete classification of pointed Hopf algebras of dimension $pq$ and $pqr$ for distinct prime numbers $p,q,r$, see Corollaries \ref{cor:classification-dimPQ} and \ref{cor:classification-dimPQR}. Furthermore, we classify pointed Hopf algebras of dimension $p^2m$ with abelian coradical, where $m$ is square-free and $\Char\K=p\nmid m$.
\begin{thm}[Theorem \ref{thm:p2m-withabeliancoradical}]
Let $m$ be square-free and $\Char\K=p\nmid m$. Let $H$ be a pointed Hopf algebra of dimension $p^2m$ with abelian coradical. Then $H$ is isomorphic to one of the following Hopf algebras:
\begin{itemize}
\item[(i)] $\fA^k_{\G(H)}(g,\chi,f)$ for $k\in\I_{1,2}$ and $\fA^3_{\G(H)}(g,f)$ (see Definition \ref{defi:fA});
\item[(ii)]$\dH^k(\D)$ for $k\in\I_{1,8}$ (see Definition \ref{defi:dH});
\item[(iii)] $\fH^k_{\G(H)}(g,\chi)$ for $k\in\I_{1,4}$ (see Theorem \ref{thm:liftingsofcH}).

\end{itemize}
\end{thm}

The Hopf algebras listed in $(i)$ are rank-one pointed Hopf algebras of the second type and the third type in \cite{S}. The Hopf algebras listed in $(ii)$ are also generated by group-like elements and skew-primitive elements. Among them, $\dH^k(\D)$ for $k\in\I_{1,5}$ are Radford biproducts of  connected (braided) Hopf algebra of dimension $p^2$  by $\K[\G(H)]$, see Remark \ref{rmk:dHk-1-5-Radfordbiproduct} for details. The Hopf algebras listed in $(iii)$ are not generated by group-like elements and skew-primitive elements. Among them, $\fH^k_{\G(H)}(g,\chi)$ for $k\in\I_{1,3}$ are Radford biproducts of connected Hopf algebras of dimension $p^2$ classified in \cite[Lemma 7.3]{W1} by $\K[\G(H)]$, see Remark \ref{rmk:fHk-1-3-Radfordbiproduct} for details. To the best of our knowledge,  $\dH^k(\D)$ for $k\in\I_{6,8}$ and $\fH^4_{\G(H)}(g,\chi)$ constitute new examples of pointed Hopf algebras.

The paper is organized as follows: In Sec. \ref{secPre}, we introduce some necessary notations and concepts. In Sec. \ref{secNicholsalgebra}, we study the classification when the diagrams are Nichols algebras of dimension $p$ or $p^2$. In Sec. \ref{secNonNicholsalgebra}, we study the classification when diagrams are not Nichols algebras and have dimension $p^2$.   In Sec. \ref{sec:p2q}, we give the main results on pointed Hopf algebras of dimension $p^2m$. It seems much more hard to classify pointed Hopf algebras of dimension $p^2m$ without the assumptions. Indeed, we are working on the classification of pointed Hopf algebras of dimension $p^2q^2$. It turns out that there are examples whose diagrams are braided (not usual) Hopf algebras of dimension $pq$ that are not Nichols algebras.

\section{Preliminaries}\label{secPre}
\subsection*{Conventions}
We work over an algebraically closed field $\K$.  Denote by $\Char\K$ the characteristic of $\K$,  by $\N$ the set of natural numbers, and by $\Z_n$ the cyclic group of order $n$.  $\K^{\times}=\K-\{0\}$.   Given~$n\geq k\geq 0$,    $\I_{k,n}=\{k,k+1,\ldots,n\}$.

For a Hopf algebra $H$,  $\G(H):=\{h\in H\mid \Delta(h)=h\otimes h,\ \epsilon(h)=1\}$ is the group of \emph{group-like} elements of $H$.   For any $g\in\G(H)$, denote by~$|g|$~or~$\ord(g)$ the  order of $g$. $H$ is said to be pointed if the coradical $H_0=\K[\G(H)]$. Unless otherwise stated, ``pointed" means ``non-cosemisimple pointed" in our context.

For any $g,h\in\G(H)$, $\Pp_{g,h}(H):=\{c\in H\mid \Delta(c)=c\otimes g+h\otimes c\} $ is   the set of $(g,h)$-\emph{skew primitive elements} of $H$. In particular, $\Pp(H):=\Pp_{1,1}(H)$ is the space of primitive elements of $H$.

\subsection{$q$-binomial coefficients}
We follow \cite{R99} to introduce necessary results on $q$-binomial coefficients and refer to \cite{R99,R11} and the reference therein for more details.
For any given~$q\in\K$, $i\leq n\in\N$, denote by~$|q|$~or~$\ord(q)$ the multiplicative order. The $q$-number and ~$q$-factorial are defined by
\begin{align*}
(0)_q=0, \ (n)_q:=1+q+ \cdots+q^{n-1}, \  \text{and}\
(0)_q!=1,\  (n)_q!=(n)_q(n-1)_q!,\text{respectively}.
\end{align*}
For $0\leq i\leq n$, the $q$-binomial coefficient is defined inductively by
\begin{align*}
\left(\begin{array}{cc}0\\0\end{array}\right)_q=1,\quad \left(\begin{array}{cc}n\\i\end{array}\right)_q=q^i\left(\begin{array}{cc}n-1\\i\end{array}\right)_q+\left(\begin{array}{cc}n-1\\i-1\end{array}\right)_q.
\end{align*}
In particular, if $n\geq 1$ and $(n-1)_q!\neq 0$, then
\begin{align*}
 \left(\begin{array}{cc}n\\i\end{array}\right)_q=\frac{(n)_q!}{(i)_q!(n-i)_q!},\quad i\in\I_{1,n}.
\end{align*}

\begin{thm} \cite[Lemma 3]{R99}\label{thm:q-binomial}
Suppose that  $A$ is an algebra over $\K$ and $x,y\in A$ satisfy $xy=qyx$, where  $q\in\K^{\times}$. Then
\begin{align*}
(x+y)^n=\sum_{i=0}^n\left(\begin{array}{cc}n\\i\end{array}\right)_qx^iy^{n-i}.
\end{align*}
In particular, if~$\Char\K=0$, $\ord(q)=n$, or  $\Char\K=p>0$, $p^k\ord(q)=n$, for $k\in\N$, then
\begin{align*}
(x+y)^n=x^n+y^n.
\end{align*}
\end{thm}

\subsection{Yetter-Drinfeld modules and bosonizations}
 Suppose that the Hopf algebra $H$ has bijective antipode. Then the category  ${}^{H}_{H}\mathcal{YD}$ of Yetter-Drinfeld modules over $H$ is rigid braided monoidal, where the braiding $c$ is determined by
\begin{align}\label{equbraidingYDcat}
c:=c_{V,W}:V\otimes W\mapsto W\otimes V,\ v\otimes w\mapsto v_{(-1)}\cdot w\otimes v_{(0)},\ \forall\,v\in V, w\in W.
\end{align}
and the left dual $V\As$ is defined by
\begin{align*}
\langle h\cdot f,v\rangle=\langle f,S(h)v\rangle,\quad f_{(-1)}\langle f_{(0)},v\rangle=S^{-1}(v_{(-1)})\langle f, v_{(0)}\rangle.
\end{align*}

\begin{rmk}\label{rmk:dimV=1}
Suppose that $V:=\K\{v\}$ is an object of dimension one in $\HYD$. By definition, there is an algebra map $\chi:H\rightarrow\K$ and $g\in\G(H)$ satisfying
\begin{align*}
h_{(1)}\chi(h_{(2)})g=gh_{(2)}\chi(h_{(1)}),
\end{align*}
such that~$\delta(v)=g\otimes v$, $h\cdot v=\chi(h)v$. Moreover, $g$ lies in the center $\cZ(\G(H))$ of $\G(H)$. Such a triple $(\G(H),g,\chi)$ is called a YD-triple for convenience.
\end{rmk}

\begin{rmk}\cite[Remark 1.5]{AS02}
Denote by ${}_G^G\mathcal{YD}$  the category ${}_{\K[G]}^{\K[G]}\mathcal{YD}$ for short, where $\K[G]$ is the group algebra of $G$. For $V\in{}_G^G\mathcal{YD}$, set $V_g:=\{v\in V\mid \delta(v)=g\otimes v\}$ for any $g\in G$. Then  as a $G$-comodule, $V:=\oplus_{g\in G}V_g$ is a $G$-graded vector space. Assume in addition that the action of $G$ is diagonalizable, that is, $V=\oplus_{\chi\in \widehat{G}}V^{\chi}$, where $V^{\chi}:=\{v\in V\mid g\cdot v=\chi(g)v,\;\forall g\in G\}$.  Then
\begin{align*}
V=\oplus_{g\in G,\chi\in \widehat{G}}V_{g}^{\chi}, \text{ where }V_{g}^{\chi}=V_g\cap V^{\chi}.
\end{align*}
\end{rmk}

 Let $R$ be a braided Hopf algebra in ${}^{H}_{H}\mathcal{YD}$ and denote $\Delta_R(r)=r^{(1)}\otimes r^{(2)}$ to avoid confusions.  The \emph{bosonization or Radford biproduct} of $R$ by $H$, denoted by $R\sharp H$, is a usual Hopf algebra, whose multiplication and comultiplication are determined by the smash product and smash coproduct, respectively:
\begin{align}
(r\sharp g)(s\sharp h)&=r(g_{(1)}\cdot s)\sharp g_{(2)}h,\quad
\Delta(r\sharp g)=r^{(1)}\sharp (r^{(2)})_{(-1)}g_{(1)}\otimes (r^{(2)})_{(0)}\sharp g_{(2)}.\label{eqSmash}
\end{align}

Clearly, the inclusion $\iota:H\rightarrow R\sharp H, h\mapsto 1\sharp h,\ \forall h\in H$ and the projection
$\pi:R\sharp H\rightarrow H,r\sharp h\mapsto \epsilon_R(r)h,\ \forall r\in R, h\in H$ are both morphisms of Hopf algebras such that $\pi\circ\iota=\id_H$.
Furthermore, $R=(R\sharp H)^{coH}=\{x\in R\sharp H\mid (\id\otimes\pi)\Delta(x)=x\otimes 1\}$.

Conversely, for Hopf algebras $A$ and $H$, if there are Hopf algebra morphisms $\pi:A\rightarrow H$ and $\iota:H\rightarrow A$ such that $\pi\circ\iota=\id_H$, then $A\simeq R\sharp H$, where $R=A^{coH}$ is a braided Hopf algebra in ${}^{H}_{H}\mathcal{YD}$. See \cite[pp. 380-384]{R11} for details.

\subsection{Nichols algebras and Lifting Method}
For $V\in{}^{H}_{H}\mathcal{YD}$, the \emph{Nichols algebra} $\BN(V)$ of $V$ is a strictly   $\N$-graded Hopf algebra $\BN(V)=\oplus_{n\geq 0}\BN^n(V)$ in ${}^{H}_{H}\mathcal{YD}$ that is generated as an algebra by $V$. Namely, $\BN(V)$ is a $\N$-graded Hopf algebra $R=\oplus_{n\geq 0} R(n)$ in ${}^{H}_{H}\mathcal{YD}$ such that
\begin{align*}
R(0)=\mathds{k}, \quad R(1)=V,\quad \Pp(R)=V,\quad
R\;\text{is generated as an algebra by } V.
\end{align*}

Furthermore,  $\BN(V)$ depends only on $(V, c_{V,V})$ and  the same braided vector space  can be realized in $\HYD$ in many ways and for many $H$'s.

\begin{rmk}\label{rmk-1-1-1-1-1}
We say that a braided vector space $(V, c)$ of rank $m$ is of diagonal type, if there is a basis $\{x_i\}_{i\in\I_{1,m}}$ of $V$ and $(q_{i,j})_{i,j\in\I_{1,m}}$ such that $c(x_i\otimes x_j)=q_{ij}x_j\otimes x_i$, see \cite{AA17} for details.    In $\Char\K=p>0$,  if $V$ is of rank 1 with trivial braiding, that is, $V=\K\{x\}$ with $c(x\otimes x)=x\otimes x$, then $\BN(V)\cong\K[x]/(x^p)$.
\end{rmk}

\begin{rmk}
Let $V:=\K\{x_1,x_2\}$ be a braided vector space of diagonal type with the braiding $(q_{i,j})_{i,j\in\I_{1,2}}$ satisfying $q_{i,j}=1$. Then $\BN(V)\cong\K[x,y]/(x^p,y^p)$, where $x,y$ are primitive elements. See \cite{WH,WJrank3,WJrank4} for more details.
\end{rmk}

Following the Lifting Method \cite{AS98b},  we explain how the Nichols algebra enters into the classification program of pointed Hopf algebra.  Let $A$ be a pointed Hopf algebra  and $\{A_n\}_{n=0}^{\infty}$ the coradical filtration of $A$, in which
$A_n=A_0\bigwedge A_{n-1}=\{a\in A\mid \Delta(a)\in A_{0}\otimes A+A\otimes A_{n-1}\}$. Consider the associated graded coalgebra $\gr A$, that is, $\gr A=\oplus_{n=0}^{\infty}\gr^n A$ with $\gr ^0 A=A_0$
and $\gr^n A=A_n/A_{n-1}$. It is well-known that $\gr A$ is a Hopf algebra admitting the Hopf
algebra projection of $\gr A$ onto $A_0$, which is denoted by $\pi: \gr A\twoheadrightarrow A_0$.
Then $\pi$ splits the inclusion $i: A_0\hookrightarrow \gr A$. Therefore, $\gr A\cong R\sharp A_0$, where $R=(\gr A)^{co A_0}$ is a Hopf algebra
in ${}^{A_0}_{A_0}\mathcal{YD}$. Furthermore, $R$ has the following properties:
\begin{itemize}
  \item[(1)] $R=\oplus_{n=0}^{\infty}R(n)$ is a graded Hopf algebra in ${}^{A_0}_{A_0}\mathcal{YD}$, where $R(n)=R\cap H(n)$;
  \item[(2)] $R$ is connected, that is, $R(0)\cong\K$;
  \item[(3)] $\Pp(R)=R(1)$.
\end{itemize}

\begin{defi}
With the notations as above, the algebra $R$ and the part $R(1)$ are called the diagram and the infinitesimal braiding of $A$, respectively.
\end{defi}
Furthermore, the subalgebra of $R$ generated by
$R(1)$ is the Nichols algebra of $R(1)$, which
plays a key role in the classification of pointed complex Hopf algebra under
the following
\begin{conj}  \cite[Conjecture 2.7]{AS02}\label{conj-2}
Let $A$ be a finite-dimensional pointed complex Hopf algebra and keep the notations as above. Then the diagram $R$ is a Nichols algebra.
\end{conj}
\begin{rmk}
To the best of our knowledge, all examples of finite-dimensional Hopf algebras with $\Char\K=0$ support Conjecture \ref{conj-2}. However, it is not always true in positive characteristic, see e.g. \cite[Example 2.5]{AS02}. Furthermore, there are infinite families of pointed Hopf algebras of dimension $p^n$  whose diagrams are not Nichols algebras. See e.g. \cite{NWW1,NWW2,NW,X22} for details.
\end{rmk}

\subsection{The Hochschild cohomology}
We  introduce some basic concepts on the Hochschild cohomology of coalgebras and explain how it enters into the classification program. See \cite{SO,WZZ,W1,NW}) for more details.
\begin{defi}\cite[Sec. 1]{SO}
Let $C$ be a pointed coalgebra and
$(M,\delta_L,\delta_R)$  a $C$-bicomodule. Set $\id_n:=\id_{C^{\otimes n}}$. The \emph{Hochschild cohomology} $H^{\bullet}(M,C)$ of $C$ with coefficients in $M$ is defined by the homology of the complex $(\mathbf{C}^n(M,C), d^n)_{n\in\N}$, where $\mathbf{C}^n(M,C)=\Hom_{\K}(M,C^{\otimes n})$ and
\begin{gather*}
d^n(f)=(\id_1\otimes f)\delta_L+\sum_{i=0}^{n-1}(-1)^{i+1}(\id_{i}\otimes \Delta\otimes \id_{n-i-1})f+(-1)^{n+1}(f\otimes \id_1)\delta_R.
\end{gather*}
\end{defi}
Set $Z^n(M,C):=\Ker\partial^n$, $B^n(M,C):=\im \partial^{n-1}$ and $H^n(M,C):=Z^n(M,C)/B^n(M,C)$. Furthermore,
\begin{pro}\cite[Proposition\,1.4]{SO}\label{proDualCo}
$H^n(M,C)\cong H^n(C^{*},M^{*})$, where $M^{*}$ is endowed with a natural $C^{*}$-bimodule structure.
\end{pro}

For $g,h\in\G(C)$ and $M:={}^{g}\K^h$ with $\delta_L(k)=h\otimes k$ and $\delta_R(k)=k\otimes g$, for $k\in\K$, $H^{\bullet}({}^g\K^h,C)$ can be computed as the homology of the complex $(C^{\otimes n},d^n_{g,h})$, whose differentials are given by
\begin{gather*}
d^0_{g,h}(k)=k(g-h),\quad k\in\K,\\
d^n_{g,h}(x)=h\otimes x+\sum_{i=0}^{n-1}(-1)^{i+1}(\id_{i}\otimes \Delta\otimes \id_{n-i-1})(x)+(-1)^{n+1}x\otimes g,\quad n>0.
\end{gather*}
Set $d^n:=d^n_{1,1}$, $Z^n(\K,C):=Z^n({}^1\K^1,C)$, $ B^n(\K,C):=B^n({}^1\K^1,C)$ and $H^n(\K,C):=H^n({}^1\K^1,C)$ for short.
\begin{rmk}
The differentials in low degrees are explicitly given as follows:
\begin{gather*}
d^1_{g,h}(x)=h\otimes x-\Delta(x)+x\otimes g,\\
d^2_{g,h}(x\otimes y)=h\otimes x\otimes y-\Delta(x)\otimes y+x\otimes \Delta(y)-x\otimes y\otimes g.
\end{gather*}
\end{rmk}

There is a close connection between the Hochschild cohomology of a pointed coalgebra and its coradical filtration.
\begin{thm}\cite[Theorem\,1.2]{SO}\label{thm-SO-thm1.2}
Let $C$ be a pointed coalgebra. Then
\begin{enumerate}
  \item $\Pp_{g,h}(C)/\K(g-h)\cong H^1({}^g\K^h,C)$.
  \item $C_n^{g,h}/C_{n-1}\cong \ker[H^2({}^g\K^h,C_{n-1})\rightarrow H^2({}^g\K^h,C)]$ for any $n>1$, where  $C_n^{g,h}=\{x\in C\mid \Delta(x)=x\otimes g+h\otimes x+\omega,\;\omega\in C_{n-1}\otimes C_{n-1}\}$.
\end{enumerate}
\end{thm}

As an application, we have the following result (see also \cite[Lemma 2.3]{WZZ}).

\begin{pro}\cite[Corollary\,1.3]{SO}\label{pronongraded}
Let $C$ be a pointed coalgebra, $D$ a subcoalgebra of $C$ and $g,h\in\G(C)$.  Suppose that there is $n>0$ such that $D_n=C_n$. Then the differential $d^1_{g,h}$ induces an injective map
\begin{gather*}
d^1_{g,h}:C^{g,h}_{n+1}/D^{g,h}_{n+1}\hookrightarrow H^2({}^g\K^h,D).
\end{gather*}
In particular, if $H^2({}^g\K^h,D)=0$, then $D_{n+1}^{g,h}=C_{n+1}^{g,h}$.
\end{pro}


Consider a strictly graded pointed coalgebra $C=\sum_{i=0}^nC(n)$, then $H^{\bullet}({}^g\K^h,C)$ admits a bi-grading structure with the homological grading and the Adams grading, that is, $H^i({}^g\K^h,C)=\oplus_{j=0}^nH^{i,j}({}^g\K^h,C)$,
where $H^{i,j}({}^g\K^h,C)\subset H^{i}({}^g\K^h,C)$ consists of homogeneous elements whose Adams grading is $j$.

Now we introduce the following braided version of Proposition \ref{pronongraded}  due to \cite[Theorem 2.7]{NW18}:
\begin{thm}\label{thm:non-primitive-genetrators-Hoch}
Let $R=\oplus_{n\geq 0}R(n)$ be a strictly  graded  Hopf algebra in ${}_G^G\mathcal{YD}$ and $S$ be a graded Hopf subalgebra of $R$ in ${}_G^{G}\mathcal{YD}$. Suppose that  there exists $n>1\in\N$ satisfying $R(k)=S(k)$ for $k\in\I_{0,n}$. Then $d^1$ induces an injective map in ${}_{G}^{G}\mathcal{YD}$
\begin{gather*}
d^1:R(n+1)/S(n+1)\hookrightarrow H^{2,n+1}(\K,S).
\end{gather*}
\end{thm}
\begin{proof}
For any $r\in R(n+1)$, by assumptions,
\begin{align*}
-d^1(r)=\Delta_R(r)-r\otimes 1-1\otimes r\in&\sum_{k=1}^{n}R(k)\otimes R(n+1-k)\\&=\sum_{k=1}^nS(k)\otimes S(n+1-k)\subset S\otimes S.
\end{align*}
Furthermore, from the coassociativity of $\Delta_R$, we have $d^2|_S(d^1(r))=0$ and hence $d^1(r)\in Z^2(\K,S)$. Consequently, $d^1$ is well-defined in ${}_G^G\mathcal{YD}$.

Now assume that $d^1(r)=0$ in $H^2(\K,S)$. Then there exists $s\in S$ such that $d^1(r)=d^1(s)$, that is, $\Delta(r-s)=(r-s)\otimes 1+1\otimes (r-s)$. Hence $r-s\in\Pp(R)=R(1)=S(1)\subset S$ and so $r\in S(n+1)$. This completes the proof.
\end{proof}

\subsection{Useful results in positive characteristic}Now we introduce some useful results, which shall be used hereafter. Denote $(\AdL x)(y) := [x, y]$ and $(x)(\AdR y)=[x, y]$.
\begin{pro}\cite[pp.186--187]{J}\label{proJ}
Let $A$ be an associative algebra with $\Char\K=p>0$ and $a, b\in A$. Then
\begin{gather*}
(\AdL a)^p(b)=[a^p,b],\quad
(\AdL a)^{p-1}(b)=\sum_{i=0}^{p-1}a^{i}ba^{p-1-i};\\
(a)(\AdR b)^{p}=[a,b^p],\quad
(a)(\AdR b)^{p-1}=\sum_{i=0}^{p-1}b^{p-1-i}ab^i.
\end{gather*}
Furthermore,
\begin{align*}
(a + b)^p=a^p+b^p+\sum_{k=1}^{p-1}s_k(a,b),
\end{align*}
where $ks_k(a,b)$ is the coefficient of $\lambda^{k-1}$ in $(a)(\AdR \lambda a+b)^{p-1}$, $\lambda$ an indeterminate.
\end{pro}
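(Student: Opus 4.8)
The statement to prove is Proposition~\ref{proJ}, attributed to Jacobson, which collects several classical identities valid in any associative algebra over a field of characteristic $p$. The plan is to establish each assertion by purely algebraic manipulation inside the associative algebra $A$, exploiting the fact that $p=0$ in $\K$ to kill the intermediate binomial coefficients.

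\textbf{The adjoint-power identities.} First I would treat the four formulas involving $(\AdL a)$ and $(\AdR b)$. Writing $\AdL a=L_a-R_a$ as the difference of left and right multiplication by $a$ (which commute as operators on $A$), I would expand $(\AdL a)^p=(L_a-R_a)^p$ by the binomial theorem for commuting operators. Since $\binom{p}{i}\equiv 0\pmod p$ for $0<i<p$, every cross term vanishes and one is left with $(\AdL a)^p=L_a^p-(-1)^pR_a^p=L_{a^p}-R_{a^p}$ (using $(-1)^p=-1$ for odd $p$, and handling $p=2$ separately where $-1=1$ still gives the right sign), hence $(\AdL a)^p(b)=a^pb-ba^p=[a^p,b]$. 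The companion formula $(\AdL a)^{p-1}(b)=\sum_{i=0}^{p-1}a^iba^{p-1-i}$ follows from the same expansion: $(L_a-R_a)^{p-1}=\sum_{i=0}^{p-1}\binom{p-1}{i}(-1)^{i}L_a^{p-1-i}R_a^{i}$, and modulo $p$ one has $\binom{p-1}{i}\equiv(-1)^i$, so the signs cancel and each coefficient becomes $1$. The two right-adjoint formulas are proved identically with the roles of $L$ and $R$ interchanged.

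\textbf{The $p$-th power of a sum.} For the final identity I would invoke the classical Jacobson formula directly, so the real content is to pin down the coefficients $s_i(a,b)$ via the stated recipe rather than to reprove the formula from scratch. Introducing an indeterminate $\lambda$, I would consider $(\lambda a+b)^p$ as an element of $A[\lambda]$ and compare it with $\lambda^p a^p+b^p+\sum_{i=1}^{p-1}s_i(a,b)\lambda^i$; differentiating formally in $\lambda$ and using the Leibniz-type expansion of the derivative of a $p$-th power gives $\tfrac{d}{d\lambda}(\lambda a+b)^p=\sum_{j=0}^{p-1}(\lambda a+b)^j\,a\,(\lambda a+b)^{p-1-j}=(a)(\AdR(\lambda a+b))^{p-1}$ by the already-established right-adjoint identity. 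Matching the coefficient of $\lambda^{i-1}$ on both sides then yields $i\,s_i(a,b)$ equal to that coefficient, which is precisely the claimed characterization. Specializing $\lambda=1$ recovers $(a+b)^p=a^p+b^p+\sum_{i=1}^{p-1}s_i(a,b)$.

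\textbf{Main obstacle.} The routine binomial expansions are straightforward; the only delicate point is the formal-derivative argument for the $s_i(a,b)$, since in characteristic $p$ one cannot literally divide by $i$ when $i$ could be interpreted carelessly, though here $1\le i\le p-1$ so each $i$ is invertible in $\K$ and the extraction of $s_i$ is legitimate. The subtlety is ensuring that the formal derivative of a noncommutative $p$-th power is correctly given by the symmetric sum $\sum_j u^j a u^{p-1-j}$ with $u=\lambda a+b$; this is where I would be most careful, verifying it by induction on the exponent so that the identification with $(a)(\AdR u)^{p-1}$ is rigorous. Since this proposition is quoted from Jacobson, I expect the paper to cite \cite{J} and present only a brief indication rather than the full derivation.
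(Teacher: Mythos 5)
Your argument is correct, and it reproduces the classical proof from Jacobson's book: the paper itself offers no proof of this proposition, citing \cite{J} directly, and your operator decomposition $\AdL a=L_a-R_a$ with the vanishing of $\binom{p}{i}$ and the congruence $\binom{p-1}{i}\equiv(-1)^i$, together with the formal-differentiation argument in $A[\lambda]$ for the $s_i(a,b)$, is exactly the standard derivation found there. No gaps; the point you flag about the noncommutative derivative of $u^p$ being $\sum_j u^j u' u^{p-1-j}$ is indeed the only step needing the small induction, and your use of the invertibility of $i$ for $1\le i\le p-1$ to extract $s_i$ is legitimate.
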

\begin{rmk}
$s_1(a,b)=(a)(\AdR b)^{p-1}=\sum_{i=0}^{p-1}b^iab^{p-1-i}.$
\end{rmk}

\begin{lem}\label{lemHHHH}
Let $H$ be a pointed Hopf algebra over $\K$, $x\in H^{g,h}$,and $y\in H^{1,k}$ for some $g,h,k\in\G(H)$. Assume that $g,h,k,x$ generate a commutative Hopf subalgebra $A$ of $H$ and $d^1_{1,k}(y)\in A$. Then
\begin{gather*}
[d^1_{g,h}(x), \Delta(y)]=d^1_{g,hk}([x,y])+[h,y]\otimes x+xk\otimes [g,y].
\end{gather*}
\end{lem}
\begin{proof}
It follows by a direct computation that
\begin{align*}
[d^1_{g,h}(x),\Delta(y)]&=[x\otimes g+h\otimes x-\Delta(x),\Delta(y)]\\
&=-\Delta([x,y])+[x\otimes g,\Delta(y)]+[h\otimes x,\Delta(y)]\\
&=-\Delta([x,y])+[h,y]\otimes x+[x,y]\otimes g+hk\otimes [x,y]+xk\otimes [g,y]\\
&=d^1_{g,hk}([x,y])+[h,y]\otimes x+xk\otimes [g,y].
\end{align*}
\end{proof}

The following lemma is due to \cite[Corollary 4.10]{S} and \cite{WW} (also \cite[Lemma\,5.1(1)]{NW}).
\begin{lem}\label{pqlem1}
Let $A$ be an associative algebra over $\K$ with generators $g$, $x$, subject to  the relations $g^n=1, gx-xg=  g(1-g)$. Assume that $\Char\K=p>0$. Then
\begin{description}
  \item[(1)] $g^ix=xg^i+ig^{i}-ig^{i+1}$. In particular, $g^px=xg^p$.
  \item[(2)] $(g)(\AdR x)^{p-1}=g-g^p$, $(g)(\AdR x)^{p}=[g,x]$.
  \item[(3)] $(\AdL x)^{p-1}(g)=g-g^p$, $[x^p, g]=(\AdL x)^p(g)=[x,g]$.
\end{description}
\end{lem}
\begin{proof}
\begin{description}
  \item[(1)] Note that $(g)(\AdR x)=[g,x]=g-g^2$. Then $(\AdR x):\K\langle g\rangle\rightarrow \K\langle g\rangle$ is a derivation. It follows that $[g^i,x]=ig^{i-1}[g,x]=ig^{i}-ig^{i+1}$.
  \item[(2)]This is \cite[Lemma\,5.1(1)]{NW} when $n=p^k$ for $k>0$. As stated in the proof of \cite[Lemma\,4.0.1(1)]{NW}, the linear map $(\AdR x):\K\langle g\rangle\rightarrow \K\langle g\rangle$ is diagonalizable with eigenvalues $0,1,\ldots,n-1$. Then by Fermat's little Theorem, $(\AdR x)^p=\AdR x$. Moreover, after a direct computation, $(g)(\AdR x)^{p-1}=g-g^p$.
  \item[(3)]The proof follows the same line of $(2)$.
\end{description}
\end{proof}

The following result  is useful to determine when a coalgebra   map is one-one.
\begin{pro}\cite[Proposition 4.3.3]{R11}\label{pro:R11-4.3.3}
Let $C, D$ be coalgebras over $\K$ and $f: C \rightarrow D$ is a coalgebra map. Assume that $C$ is pointed. Then the following are equivalent:
\begin{itemize}
  \item[(a)] $f$~is one-one.
  \item[(b)] For any~$g,h\in\G(C)$, $f|_{\Pp_{g,h}(C)}$~is one-one.
  \item[(c)] $f|_{C_1}$~is one-one.
\end{itemize}
\end{pro}

\section{The diagrams are Nichols algebras}\label{secNicholsalgebra}
Let $\Char\K=p>0$ and $p\nmid m$. In this section, we study the classification of pointed Hopf algebras whose diagrams are Nichols algebras of dimension $p$ or $p^2$ to obtain our main results. In particular, we give the complete classification of pointed Hopf algebras of dimension $pq$ and $pqr$ for distinct prime numbers $p,q,r$.

\subsection{The diagram has dimension $p$}We determine finite-dimensional pointed Hopf algebras  whose diagrams have dimension $p$, which were essentially classified in \cite{S} in a different way. As a byproduct,  we give the complete classification of pointed Hopf algebras of dimension $pq$ and $pqr$.
\begin{defi}\label{defi:fA}
Let $(G,g,\chi)$ be a YD triple such that $\chi(g)=1$,
 where $G$ is a group of order $n$ with generators $h_1,\cdots,h_t$ and a fixed  set of defining relations $\cR_t$. Let $f$ be a map from $G$ to $\K$ such that $f(hk)=\chi(k)f(h)+f(k)$ for any $h,k\in G$, with condition: $f$ is the zero map if $g=1$. Let
\begin{itemize}
  \item[(1)] $\fA_G^1(g,\chi,f):=\K\langle h_1,\cdots,h_t,x\rangle/(\cR_t, h_1x-\chi(h_1)xh_1-f(h_1)(1-g),\cdots,h_tx-\chi(h_t)xh_t-f(h_t)(1-g),x^p)$, if $f(h)(1-g^p)=0$ for any $h\in G$ and $f(g)=0$;
  \item[(2)] $\fA_G^2(g,\chi,f):=\K\langle h_1,\cdots,h_t,x\rangle/(\cR_t, h_1x-\chi(h_1)xh_1-f(h_1)(1-g),\cdots,h_tx-\chi(h_t)xh_t-f(h_t)(1-g),x^p-x)$, if $\chi^{p-1}=\epsilon$, $g^{p-1}=1$, $f(g)=0$ and $f(h)^p=f(h)$ for any $h\in G$;
  \item[(3)] $\fA_G^3(g,f):=\K\langle h_1,\cdots,h_t,x\rangle/(\cR_t, h_1x-xh_1-f(h_1)h_1(1-g),\cdots, h_tx-xh_t-f(h_t)h_t(1-g),x^p-f(g)x)$, if $p\mid \ord(g)$, $\chi=\epsilon$, $f(g)= 1$ and $f(h)^p=f(h)$ for any $h\in\ G$.
\end{itemize}

\end{defi}

 They admit a Hopf algebra structure given by $\Delta(h)=h\otimes h$ for $h\in G$ and $\Delta(x)=x\otimes 1+g\otimes x$.

\begin{rmk}\label{rmk:important}
\begin{itemize}
\item[(1)]For any $n\in\N$ and $h\in G$, $f(h^n)=\sum_{i=0}^{n-1}\chi(h)^if(h)$. From which, $f(h^q)=0$ when $\chi(h)$ is a primitive $q$th root of unity. Furthermore,
\begin{align*}
h^nx=\chi(h^n)xh^n+f(h^n)h^n(1-g)=\chi(h)^nxh^n+\sum_{i=0}^{n-1}\chi(h)^if(h)h^n(1-g).
\end{align*}
If  $\chi(h)=1$ and $p\nmid\ord(h)$, then $f(h)=0$.  In particular, the case $f(g)\neq0$ occurs only when $g\neq 1$ and $p\mid\ord(g)$.
\item[(2)]If $\chi=\epsilon$, then $f$ is a morphism of groups from $G$ to $(\K,+)$. Furthermore, if $\chi=\epsilon$ and $p\nmid\ord(G)$, then $f$ must be the zero map (we write $f=0$ for short).
  \item [(3)]Let $\fA$ be one of the Hopf algebras in Definition \ref{defi:fA}. If $g=1$, then $f=0$; otherwise  there is an exact sequence of $\K[G]$-modules:
  \begin{align*}
  \K\{1-g\}\hookrightarrow\Pp_{1,g}(\fA)\twoheadrightarrow\K[x].
  \end{align*}
  For any fixed $h\in G$, if $p\nmid\ord(h)$, then as $\langle h\rangle$-modules, the exact sequence is split and hence we can choose $f(h)=0$. Furthermore, we can take $f=0$ when $p\nmid\ord(G)$.

  \item [(4)] The Hopf algebras $\fA_{G}^i(g,\chi,f)$ for $i\in\I_{1,2}$ and $\fA_G^3(g,f)$ are rank-one pointed Hopf algebras of the second type and third type in \cite{S}, respectively.
  \item [(5)]In what follows, denote $\fA_G^i(g,\chi):=\fA_G^i(g,\chi,0)$ with $i\in\I_{1,2}$ for short.
\end{itemize}
\end{rmk}
\begin{rmk}\label{rmk:fA-dual}
\begin{itemize}
\item[(i)]
It is clear that $\{x^{i}h,\ h\in G,i\in\I_{0,p-1}\}$ is a basis of $\fA_G^k(g,\chi)$ for $k\in\I_{1,2}$. Then one can check easily that $\pi:\fA_G^k(g,\chi)\rightarrow \K[G],\ x^ih\rightarrow h$  is a bialgebra map admitting a bialgebra
section $\iota:\K[G]\rightarrow \fA_G^k(g,\chi)$ such that $\pi\circ\iota=\id$. Therefore, $\fA^k(g,\chi)\cong R\sharp\K[G]$ for $k\in\I_{1,2}$, where $R\cong\K[x]/(x^p)$ or $\K[x]/(x^p-x)$.

\item[(ii)] Here $\K[x]/(x^p)$ and $\K[x]/(x^p-x)$ are usual Hopf algebras of dimension $p$ appeared in \cite{W1}. Furthermore, they are dual Hopf algebras of $\K[T]/(T^p)$ and $\K[X]/(X^p-1)$, respectively, where $\Delta(T)=T\otimes 1+1\otimes T$ and $\Delta(X)=X\otimes X$ (See \cite[Corollary 7.2]{W1}). In particular, up to isomorphism, they are regarded as Hopf subalgebras of the algebra of distributions on $G_a$ and $G_m$, respectively
    (see \cite[7.8 and 7.10]{Ja}).

\item[(iii)]For $k\in\I_{1,2}$, by \cite[2.2]{AG99}, $\fA_G^k(\chi,g)\As\cong R\As\sharp(\K[G])\As$, where $R\As\cong\K[T]/(T^p)$ or $\K[X]/(X^p-X)$ with $T\in\Pp(R\As)$ and $X\in\G(R\As)$, respectively.

\end{itemize}
\end{rmk}

\begin{pro}
$\gr\fA_G^i(g,\chi,f)\cong\K[x]/(x^p)\sharp\K[G]$ for $i\in\I_{1,2}$ and $\gr\fA_G^3(g,f)\cong \K[x]/(x^p)\sharp\K[G]$.
\end{pro}
\begin{proof}
Let $\fA$ be one of the Hopf algebras in Definition \ref{defi:fA}. It is easy to check that $\fA_0=\K[G]$ and $\fA_i=\fA_{i-1}+\K[G]\{x^i\}$ for $i\in\I_{1,p-1}$. Therefore, $\gr\fA\cong R\sharp\K[G]$, where $R\cong\K[x]/(x^p)$.
\end{proof}

\begin{pro}\label{pro:isomorohism-rank-1}
\begin{itemize}
\item[(1)]$\fA_G^1(g,\chi,f)\cong\fA_G^1(g',\chi',f')$  if and only if there exists $F\in\Aut(G)$ such that $F(g)=g'$,  $\chi\cdot F^{-1}=\chi'$ and $\alpha f'F-f+\beta(\chi-\epsilon)=0$ for some $\alpha\in\K^{\times},\beta\in\K$ satisfying $\beta(1-g^p)=0$.
\item[(2)]$\fA_G^2(g,\chi,f)\cong\fA_G^2(g',\chi',f')$   if and only if there exists $F\in\Aut(G)$ such that $F(g)=g'$,  $\chi\cdot F^{-1}=\chi'$ and $\alpha f'F-f+\beta(\chi-\epsilon)=0$ for some $\alpha\in\K^{\times},\beta\in\K$ satisfying $\alpha^p=\alpha$ and $(\beta^p-\beta)(1-g)=0$.
\item[(3)]$\fA_G^3(g,f)\cong\fA_G^3(g',f')$ if and only if there exists $F\in\Aut(G)$ such that $F(g)=g'$ and $f\cdot F^{-1}=f'$.
\end{itemize}
\end{pro}
\begin{proof}
\begin{itemize}
  \item [(1)-(2):] Suppose that  there is a Hopf algebra isomorphism  $\phi:\fA_G^i(g,\chi,f)\rightarrow \fA_G^i(g',\chi',f')$, then by Proposition \ref{pro:R11-4.3.3}, $\phi|_{G}\in\Aut(G)$ and   $\phi(\Pp_{1,g}(\fA_G^i(g,\chi)))=\Pp_{1,g'}(\fA_G^i(g',\chi'))$. Hence we assume that $\phi(g)=g'$  and $\phi(x)=\alpha x'+\beta(1-g')$ for $\alpha\neq 0,\beta\in\K$. If $g'=1$, then we assume that $\beta=0$. Since $\phi(hxh^{-1}-\chi(h)x-f(h)(1-g))=0$, it follows that $\chi'\cdot\phi=\chi$ and $[\alpha f'\phi-f+\beta(\chi-\epsilon)](h)(1-g')=0$. Applying $\phi$ to the relation $x^p=(i-1)x$, we have $(i-1)(\alpha^p-\alpha)=0$ and $\beta^p(1-(g')^p)=(i-1)\beta(1-g')$.

      If $g'=1$, then $g=1$,  $f=0=f'$ and $(i-1)(\alpha^p-\alpha)=0$. If $g'\neq 1$ and $(g')^{p}\neq g'$, then $i=1$ and $\alpha f'\phi-f+\beta(\chi-\epsilon)=0$ satisfying $\beta^p(1-(g')^p)=0$. If $g'\neq 1$ and $(g')^{p}= g'$, then $\alpha f'F-f+\beta(\chi-\epsilon)=0$ for some $\alpha\in\K^{\times},\beta\in\K$ satisfying $(i-1)(\alpha^p-\alpha)=0$ and $\beta^p=(i-1)\beta$.

      Conversely, let $\psi$ be the algebra morphism determined by $\psi(h)=F(h),\psi(x)=\alpha x'+\beta(1-g')$, then it is clear  that $\psi$ is a Hopf algebra isomorphism.

  \item [(3):] It follows directly by \cite[Lemma 4.17]{S}.
\end{itemize}
\end{proof}
\begin{pro}\label{pro:non-isomorphism-rank-one}
As Hopf algebras, $\fA_G^1(g,\chi,f)$, $\fA_G^2(g,\chi,f)$ and  $\fA_G^3(g,f)$ are pairwise non-isomorphic.
\end{pro}
\begin{proof}
We first claim that $\fA_G^1(g,\chi,f)\not\cong\fA_G^2(g,\chi,f)$. Indeed, if  there is a Hopf algebra isomorphism  $\phi:\fA_G^1(g,\chi,f)\rightarrow\fA_G^2(g,\chi,f)$, then $\phi|_{G}\in\Aut(G)$ and   $\phi(\Pp_{1,g}(\fA_G^1(g,\chi,f)))=\Pp_{1,g'}(\fA_G^2(g',\chi',f'))$, which implies that $\phi(g)=g'$ and $\phi(x)=\alpha x'+\beta(1-g')$ for $\alpha\neq 0$. On the other hand, we have $\alpha=0$ when applying $\phi$ to the relations $hx-\chi(h)x=f(h)h(1-g)$ and $x^p=0$ in $\fA_G^1(g,\chi,f)$, a contradiction. Consequently, the claim follows.

Similarly,  $\fA_G^k(g,\chi,f)\not\cong\fA_G^3(g,f)$ for $k\in\I_{1,2}$.
\end{proof}

Now we introduce the following result, which is essentially appeared in \cite{S} in a different way.
\begin{thm}\label{thm:classification-rank-one}
Let $H$ be a finite-dimensional pointed Hopf algebra whose diagram has dimension $p$. Then there exists a tuple $(\G(H),g,\chi,f)$  such that $H$ is isomorphic to $\fA_{\G(H)}^1(g,\chi,f)$, $\fA_{\G(H)}^2(g,\chi,f)$ or $\fA_{\G(H)}^3(g,f)$.
\end{thm}
\begin{proof}
Since $x\in\Pp_{1,g}(H)$, it follows that $hx-\chi(h)xh\in\Pp_{h,hg}(H)\cap H_0$.  If $g=1$, then $\Pp_{h,h}(H)\cap H_0=0$; otherwise $\Pp_{h,hg}(H)\cap H_0=\K\{h(1-g)\}$.  Hence $hx-\chi(h)xh=f(h)h(1-g)$, where $f$ is a map from $\G(H)$ to $\K$ with condition: $f=0$ when $g=1$. Then consider the conjugation of $\K[G]$, we have $f(hk)=\chi(k)f(h)+f(k)$. If $f(g)\neq 0$ and $g\neq 1$, then applying $f$ to the relation $gk=kg$ for any $k\in\G(H)$, we have  $\chi(g)f(k)+f(g)=\chi(k)f(g)+f(k)$, which implies that $\chi=\epsilon$ and thereby $f(hk)=f(h)+f(k)$.

Observe that $gx-xg=f(g)g(1-g)$. Then by Proposition \ref{proJ} and Lemma \ref{pqlem1},
\begin{align*}
\Delta(x^p)&=(x\otimes 1+g\otimes x)^p
      =x^p\otimes 1+g^p\otimes x^p+(g)(\AdR x)^{p-1}\otimes x\\
      &=x^p\otimes 1+g^p\otimes x^p+f(g)^{p-1}(g-g^p)\otimes x,\\
\Delta(x^p-f(g)^{p-1}x)&=(x^p-f(g)^{p-1}x)\otimes 1+g^p\otimes (x^p-f(g)^{p-1}x),
\end{align*}
which implies that $x^p-f(g)^{p-1}x\in\Pp_{1,g^p}(H)\cap H_{p-1}$.

Assume that $g^p=g$. Then  $x^p=\lambda_2x+\lambda_3(1-g^p)$ for $\lambda_2,\lambda_3\in\K$ and $f(g)=0$. By induction, $hx^n=[\chi(h)x+f(h)(1-g)]^nh$ for $n\in\N$ and $h\in\G(H)$. In particular,
\begin{align*}
hx^p&=[\chi(h)x+f(h)(1-g)]^ph=[\chi(h)x-f(h)g)]^ph+f(h)^ph\\
&=\chi(h)^px^ph-f(h)^pg^ph+f(h)^ph=\chi(h)^px^ph+f(h)^p(1-g^p)h.
\end{align*}

The verification of $h(x^p)=(hx)x^{p-1}$ imposes the conditions:
\begin{align*}
\lambda_2(\chi(h)^p-\chi(h))=0,\quad \lambda_3\chi(h)^p+f(h)^p=\lambda_2f(h)+\lambda_3.
\end{align*}

Let $\fA(g,\chi,f,\lambda_2,\lambda_3)$ be the Hopf algebra described as above. Then consider the translation $x\mapsto x+a(1-g)$ satisfying $a^p-\lambda_2a=\lambda_3$, we have $\fA(g,\chi,f,\lambda_2,\lambda_3)\cong \fA(g,\chi,f+a(\epsilon-\chi),\lambda_2,0)$.
If $\lambda_2=0$, then clearly $f+a(\epsilon-\chi)=0$ and so $\fA(g,\chi,f,\lambda_2,\lambda_3)\cong\fA^1_{\G(H)}(g,\chi)$. If $\lambda_2\neq 0$, then $\chi^{p-1}=\epsilon$ and by the translation $x\mapsto b^{-1}x$ satisfying $b^{p-1}=\lambda_2$, we have
$$\fA(g,\chi,f,\lambda_2,\lambda_3)\cong \fA(g,\chi,[f+a(\epsilon-\chi)]b^{-1},1,0)\cong\fA^2_{\G(H)}(g,\chi,[f+a(\epsilon-\chi)]b^{-1}).$$

Assume that $g^p\neq g$. Then  $x^p-f(g)^{p-1}x=\lambda_4(1-g^p)$. Furthermore,
\begin{align*}
hx^p&=[\chi(h)x+f(h)(1-g)]^ph=[\chi(h)x-f(h)g)]^ph+f(h)^ph\\
&=\chi(h)^px^ph-f(h)^pg^ph+(-f(h)g)(\ad_R\chi(h)x)^{p-1}h+f(h)^ph\\
&=\chi(h)^px^ph-f(h)^pg^ph-f(h)\chi(h)^{p-1}(g)(\ad_R x)^{p-1}h+f(h)^ph\\
&=\chi(h)^px^ph-f(h)^pg^ph-f(h)\chi(h)^{p-1}f(g)^{p-1}(g-g^p)h+f(h)^ph\\
&=\chi(h)^px^ph+f(h)^p(1-g^p)h-f(h)\chi(h)^{p-1}f(g)^{p-1}(g-g^p)h\\
&=\chi(h)^pf(g)^{p-1}xh+[\lambda_4\chi(h)^p+f(h)^p](1-g^p)h-f(h)\chi(h)^{p-1}f(g)^{p-1}(g-g^p)h;\\
h(x^p)&=f(g)^{p-1}hx+\lambda_4(1-g^p)h\\
&=\chi(h)f(g)^{p-1}xh+f(g)^{p-1}f(h)(1-g)h+\lambda_4(1-g^p)h.
\end{align*}

If $f(g)=0$, then from $(hx)x^{p-1}=h(x^p)$, we have $[\lambda_4(\chi^p-\epsilon)(h)+f(h)^p](1-g^p)=0$; otherwise $\chi=\epsilon$ and then we have
\begin{align*}
f(g)^{p-1}f(h)=f(h)^p.
\end{align*}

Similar to the last case, we may choose $\lambda_4=0$ by the  translation $x\mapsto x+a(1-g)$ satisfying $a^p-f(g)^{p-1}a=\lambda_4$.

If $f(g)=0$, then $f(h)(1-g^p)=0$ and $H\cong\fA^1_{\G(H)}(g,\chi,f)$; otherwise we have $\chi=\epsilon$, $f(g)^{p-1}f(h)=f(h)^p$ and then by the translation $x\mapsto f(g)^{-1}x$, we can take $f(g)=1$ and thereby $H\cong\fA_{\G(H)}^3(g,f)$.
\end{proof}

\begin{cor}\label{cor:classification-dimPQ}
Set $\Z_q:=\langle g\rangle$ and $\widehat{\Z_q}:=\langle\chi\rangle$. Let $H$ be a pointed Hopf algebra of dimension $pq$ for a prime number $q$. Then $H$ is isomorphic to $\fA_{\Z_q}^1(1,\epsilon)$, $\fA_{\Z_q}^2(1,\epsilon)$, $\fA_{\Z_q}^1(g,\epsilon)$, $\fA_{\Z_q}^2(g,\epsilon)$, $\fA_{\Z_q}^1(1,\chi)$ or $\fA_{\Z_q}^2(1,\chi)$.
\end{cor}
\begin{proof}

Let $R$ be the diagram of $H$ and $V:=R(1)$. We claim that $\G(H)\cong \Z_q$.  By Nichols-Zoeller Theorem \cite{NZ},  $\dim H_0 |\dim H$, which implies that $\dim H_0= p,q$. If $\dim H_0=p$, that is, $H_0\cong\K[\Z_p]$, then there must be an element $x\in V_g^{\epsilon}$ for some $g\in\G(H)$ with trivial braiding. Hence $\K[x]/(x^p)$ is a braided Hopf subalgebra of $R$. Then by \cite{G} or \cite[Proposition 2.16]{Scha01}, $\dim \K[x]/(x^p)=p$ must divide $\dim R=q$, a contradiction.
Consequently, $\dim H_0=q$, that is, $\G(H)\cong \Z_q$.

 Since $\G(H)\cong \Z_q:=\langle g\rangle$, it follows that $\dim R=p$. Then by Theorem \ref{thm:classification-rank-one}, $H\cong\fA^k_{\Z_q}(g^i,\chi^j)$ for $k\in\I_{1,2}$ and some $i,j\in\I_{0,q-1}$ such that  $\xi^{ij}=1$, where $\chi(g)=\xi$ is a primitive $q$th root of unity.  Furthermore,  $\xi^{ij}=1$ yields $i=0$ or $j=0$. If $i=0$, then up to change the character $\chi$, we can take $j\in\I_{0,1}$. If $j=0$, then   we can take $i\in\I_{0,1}$. This completes the proof.
\end{proof}

\begin{cor}\label{cor:classification-dimPQR}
Set $\Z_{qr}:=\langle g\rangle$, $\widehat{\Z_{qr}}:=\langle\chi\rangle$ and $\widehat{\Z_q\rtimes \Z_r}:=\langle\tau\rangle$. Let $H$ be a pointed Hopf algebra of dimension $pqr$ for distinct prime numbers $p,q,r$. Then $H$ is isomorphic to $\fA_{\Z_{qr}}^1(1,\chi^j)$, $\fA_{\Z_{qr}}^2(1,\chi^j)$ for $j\in\{0,1,q,r\}$, $\fA_{\Z_{qr}}^1(g^i,\epsilon)$, $\fA_{\Z_{qr}}^2(g^i,\epsilon)$ for $i\in\{1,q,r\}$,  $\fA_{\Z_q\rtimes \Z_r}^1(1,\tau)$ or $\fA_{\Z_q\rtimes \Z_r}^2(1,\tau)$.
\end{cor}
\begin{proof}
Let $R$ be the diagram of $H$ and $V:=R(1)$.
We claim that $\dim H_0=qr$. Indeed, by Nichols-Zoeller Theorem \cite{NZ},  $\dim H_0 |\dim H$, which implies that $\dim H_0=p,q,r,pq,qr,pr$. If $\dim H_0=pq$, then $\widehat{\G(H)}\cong \Z_{pq}:=\langle\zeta\rangle$ and $\dim R=r$. Hence $\dim R(1)=1$ with a basis $\{x\}$.   Furthermore, $x\in V_g^{\zeta^i},i\in\I_{0,q-1}$ with $g\in\G(H)$, which implies that $c(x\otimes x)=\zeta^i(g) x\otimes x$. Therefore, $R$ contains a braided Hopf subalgebra of dimension $p$ or $q$, a contradiction.   Hence $\dim H_0\neq pq$. Similarly, we have $\dim H_0\neq pr,qr,p,q,r$. Therefore, the claim follows.

Assume that $\G(H)=\Z_{qr}=\langle g\rangle$. Then $\dim R=p$ and hence   $\dim V=1$ with trivial braiding. Let $V:=\K\{x\}$ and $\widehat{\G(H)}=\langle\chi\rangle$, where $\chi(g)=\theta$ is a primitive $qr$th root of unity.  Then by Remark \ref{rmk:dimV=1}, $x\in V_{g^i}^{\chi^j}$ for some $i,j\in\I_{0,qr-1}$ such that $\theta^{ij}=1$, which implies that $qr\mid ij$. Hence $i=0$ or $j=0$. If $i=0$, then by Proposition \ref{pro:isomorohism-rank-1}, we can take $j\in\{0,1,q,r\}$. If $j=0$, then we can take $i\in\{0,1,q,r\}$.

Assume that $\G(H)=\Z_q\rtimes \Z_r$. Then $\dim R=p$ and hence $\dim V=1$ with trivial braiding. Let $V:=\K\{x\}$ and $\widehat{\G(H)}=\langle \tau\rangle$, where $\tau(g)=\xi$ is a primitive $q$th root of unity, and $\tau(h)=1$.  Since the center of $\G(H)$ is trivial,  by Remark \ref{rmk:dimV=1}, $x\in V_{1}^{\tau^i}$ for some $i\in\I_{0,q-1}$. By Proposition \ref{pro:isomorohism-rank-1},  we can take $i\in\I_{0,1}$.

Therefore the corollary follows by Theorem \ref{thm:classification-rank-one}.
\end{proof}

\begin{cor}\cite[Theorem 3.3]{WW}
Let $H$ be a non-connected pointed Hopf algebra of dimension $p^2$ and $g$ a generator of $\Z_p$. Then $H$ is isomorphic to $\fA_{\Z_p}^1(1,\epsilon)$, $\fA_{\Z_p}^2(1,\epsilon)$, $\fA_{\Z_p}^1(g,\epsilon)$ or $\fA_{\Z_p}^3(g,f)$.
\end{cor}
\begin{proof}
Let $R$ be the diagram of $H$. By assumptions, $\dim H_0=p$ and hence $\G(H)\cong\Z_p$ and $R\cong\K[x]/(x^p)$ with Yetter-Drinfeld module structure given by $g\cdot x=x, \delta(x)=g^i\otimes x$ for $i\in\I_{0,p-1}$. By changing the generator,  we may choose $i\in\I_{0,1}$.

Observe that $f=0$ when $f(g)=0$. If $i=0$, then by Theorem \ref{thm:classification-rank-one}, $H$ is isomorphic to $\fA_{\Z_p}^1(1,\epsilon)$ or $\fA_{\Z_p}^2(1,\epsilon)$; otherwise $H$ is isomorphic to $\fA_{\Z_p}^1(g,\epsilon)$ or $\fA_{\Z_p}^3(g,f)$.
\end{proof}
\subsection{Liftings of quantum
planes } We determine liftings of quantum
planes of dimension $p^2$.
\begin{defi}
A QPYD-datum $\D(G,\chi_1,\chi_2,g_1,g_2)$ consists of a group $G$ of order $m$,  characters $\chi_1,\chi_2\in\widehat{G}$ and $ g_1,g_2\in\mathcal{Z}(G)$ satisfying $\chi_1(g_1)=1=\chi_2(g_2)=\chi_1(g_2)\chi_2(g_1)$. We write $\D:=\D(G,\chi_1,\chi_2,g_1,g_2)$ for short.
\end{defi}
\begin{lem}
Let $V:=\K\{x,y\}$ and $\D$ a QPYD-datum. Then $(V,\D)\in{}_G^G\mathcal{YD}$ with $x\in V_{g_1}^{\chi_1}$, $y\in V_{g_2}^{\chi_2}$. Furthermore, $\dim\BN(V,\D)=p^2$.
\end{lem}
\begin{proof}
It follows by the fact that $(V,\D)$ is a quantum plane.
\end{proof}


Now we determine liftings of $\BN(V,\D)\sharp\K[G]$.
For a QPYD datum $\D:=\D(G,\chi_1,\chi_2,g_1,g_2)$,  we set $q_{ij}:=\chi_j(g_i)$ and $[x,y]_c=xy-q_{12}yx$ for short.
\begin{defi}\label{defi:dH}
For $k\in\I_{1,8}$,
\begin{align*}
\dH^k(\D):=T(V)\sharp\K[G]/\cI_k,
\end{align*}
where $\cI_k$ is the ideal given as follows:
\begin{itemize}
   \item $\cI_1:=([x,y]_c,x^p,y^p)$;
   \item $\cI_2:=([x,y]_c,x^p-x,y^p)$, if  $\chi_1^{p-1}=\epsilon$, $g_1^{p-1}=1$;
 \item  $\cI_3:=([x,y],x^p-y,y^p)$, if $\chi_1^{p}=\chi_2$, $g_1^p=g_2$;
   \item $\cI_4:=([x,y]_c, x^p-x,y^p-y)$, if $\chi_1^{p-1}=\epsilon=\chi_2^{p-1}$, $g_1^{p-1}=1=g_2^{p-1}$;
   \item $\cI_5:=( [x,y]-y, x^p-x,y^p)$, if $\chi_1=\epsilon$, $g_1=1$;
    \item $\cI_6:=([x,y]-(1-g_1g_2), x^p,y^p)$, if $\chi_1\chi_2=\epsilon$, $g_1g_2\neq 1$;
    \item $\cI_7:=([x,y]-y-(1-g_2), x^p-x,y^p)$, if  $\chi_1=\epsilon=\chi_2$, $g_1=1$, $g_2\neq 1$;
    \item  $\cI_8:=([x,y], x^p-y,y^p-x)$, if $\chi_1^{p}=\chi_2$, $\chi_2^p=\chi_1$, $g_1^p=g_2$, $g_2^p=g_1$ and $g_1\neq g_2$ or $\chi_1^{p}=\chi_2$, $\chi_2^p=\chi_1$, $g_1=g_2$, $g_1^{p-1}=1$ and $\chi_1\neq \chi_2$.
 \end{itemize}
 If no confusions, we also denote $\dH^k(\chi_1,\chi_2,g_1,g_2):=\dH^k(\D)$ for $k\in\I_{1,8}$.
\end{defi}

\begin{lem}
For $k\in\I_{1,8}$, $\dH^k(\D)$ is a Hopf algebra  with
\begin{align*}
\Delta(x)=x\otimes 1+g_1\otimes x,\quad \Delta(y)=y\otimes 1+g_2\otimes y.
\end{align*}
\end{lem}
\begin{proof}
It follows by direct computations that $\Delta(\cI_k)\subset \dH^k(\D)\otimes\cI_k+\cI_k\otimes\dH^k(\D)$ and $\epsilon(\cI_k)=0$.
\end{proof}

\begin{lem}
For $k\in\I_{1,8}$, $\dim\dH^k(\D)=p^2m$.
\end{lem}
\begin{proof}
 Applying the Diamond Lemma \cite{B}, it suffices to verify the following overlaps:
\begin{gather}
g^{m_g-1}(gx)=(g^{m_g-1}g)x,\quad g^{m_g-1}(gy)=(g^{m_g-1}g)y ,\label{eq:p2q-11-1}\\
g(xx^{p-1})=(gx)x^{p-1},\quad g(yy^{p-1})=(gy)y^{p-1},\label{eq:p2q-11-2}\\
g(xy)=(gx)y,\quad (gh)x=g(hx)\label{eq:p2q-11-3}\\
x(yy^{p-1})=(xy)y^{p-1},\quad x^{p-1}(xy)=(x^{p-1}x)y,\label{eq:p2q-11-4}\\
(xx^{p-1})x=x(x^{p-1}x),\quad (yy^{p-1})y=y(y^{p-1}y),\label{eq:p2q-11-5}
\end{gather}
are resolvable with the order $y<x<g$, where $g\in G$ with order $m_g$. We omit the details to save spaces, since it is tedious but straightforward.
\end{proof}
\begin{rmk}\label{rmk:dHk-1-5-Radfordbiproduct}
 Observe that $\{y^ix^jh,\ h\in G,i,j\in\I_{0,p-1}\}$ is a basis of $\dH^k(\D)$. For $k\in\I_{1,8}-\{6,7\}$, it is easy to see that the projection $\pi:\dH^k(\D)\rightarrow \K[G],\ y^jx^ih\rightarrow h$  is a bialgebra map admitting a bialgebra
section $\iota:\K[G]\rightarrow \dH^k(\D)$ such that $\pi\circ\iota=\id$. Then $\dH^k(\D)\cong R\sharp\K[G]$ for $k\in\I_{1,8}-\{6,7\}$, where $R\cong T(V)/\cI_k$ is a (maybe not usual) connected Hopf algebra of dimension $p^2$. If $\chi_2(g_1)=1$ (e.g. $k\in\{3,5\}$), then $R$ is one of the restricted universal enveloping algebras of 2-dimensional restricted Lie algebras classified in \cite[Proposition A.3]{W1}.
\end{rmk}
\begin{pro}
For $k\in\I_{1,8}$, $\gr\dH^k(\D)=\BN(V,\D)\sharp\K[G]$.
\end{pro}
\begin{proof}
Let $\cF_{(n)}:=\cF_{(n-1)}+\K[G]\{y^ix^j\mid i+j=n,i,j\in\I_{0,p-1}$ with $\cF_{(0)}=\K[G]$.
It is easy to check that the filtration $\{\cF_{(n)},n\in\I_{0,p^2-1}\}$ is a Hopf algebra filtration. Therefore,  $(\dH^k(\D))_0=\K[G]$ and hence $\dH^k(\D)$ is a pointed Hopf algebra with coradical $\K[G]$. Furthermore, $\gr_{\cF}\dH^k(\D)=R\sharp\K[G]$, where $\gr_{\cF}\dH^k(\D)$ is the graded Hopf algebra associated to the filtration  $\{\cF_{(n)},n\in\I_{0,p^2-1}\}$. Clearly, $V\subset\Pp(R)$. Since $\dim R=\dim\BN(V)=p^2$, it follows that $R=\BN(V,\D)$. Therefore, the   filtration  $\{\cF_{(n)},n\in\I_{0,p^2-1}\}$ is the coradical filtration of $\dH^k(\D)$.
\end{proof}

\begin{pro}\label{pro:isomorphism-rank-2}
Let $\D:=\D(G,\chi_1,\chi_2,g_1,g_2)$ and $\D':=\D(G,\chi'_1,\chi'_2,g'_1,g'_2)$.
For $k\in\I_{1,8}$, $\dH^k(\D)\cong\dH^k(\D')$ if and only if there exists $f\in\Aut(G)$ and $\sigma\in S_2$ such that $f(g_{\sigma(i)})=g_i^{\prime}$ and $\chi_{\sigma(i)}\cdot f^{-1}=\chi_{i}^{\prime}$.

\end{pro}
\begin{proof}
Set $x_1:=x$ and $x_2:=y$ for convenience. Suppose that $\psi:\dH^1(\D)\rightarrow \dH^k(\D')$ is an isomorphism of Hopf algebras. Then $\psi|_{G}\in\Aut(G)$ and $\psi(x_i)\in\Pp_{1,\psi(g_i)}(\dH^k(\D'))$.

If $\chi'_1= \chi'_2=\chi$ and $g'_1=g'_2=g$, then   $\psi(g_1)=g=\psi(g_2)$, which implies that $g_1=g_2$. Furthermore, there exist some $\alpha_i,\beta_i\in\K$ for $i\in\I_{1,3}$ such that
\begin{align*}
\psi(x_1)=\alpha_1x'_1+\alpha_2x'_2+\alpha_3(1-g),\quad \psi(x_2)=\beta_1x'_1+\beta_2x'_2+\beta_3(1-g),\quad \alpha_1\beta_2\neq \beta_1\alpha_2.
\end{align*}
If $g=1$, we let $\alpha_3=0=\beta_3$.  Since $hx_ih^{-1}=\chi_i(h)x_i$ for all $h\in G$, we see that
\begin{align*}
\chi\cdot\psi(h)=\chi_i(h),\quad (1-\chi_1(h))\alpha_3=0=(1-\chi_2(h))\beta_3.
\end{align*}
Hence $\chi_1=\chi\cdot\psi|_G=\chi_2$.

If $\chi'_1\neq \chi'_2$ or $g'_1\neq g'_2$, then there is $\sigma\in S_2$ and $\alpha_1,\alpha_2\in\K-\{0\},\beta_1,\beta_2\in\K$   such that
\begin{align*}
\psi(g_{i})=g'_{\sigma(i)},\quad \psi(x_i)=\alpha_ix'_{\sigma(i)}+\beta_i(1-g'_{\sigma(i)}),
\end{align*}
where $\beta_i=0$, if $g'_{\sigma(i)}=1$. Indeed, if $g'_1\neq g'_2$, then $\dim \Pp_{1,g'_i}(\dH(\D'))=1$ and hence the claim follows; if $g'_1=g'_2$ and $\chi'_1\neq \chi'_2$, then using the adjoint action of $\K[G]$, one can check easily that the claim follows.
Consequently, applying $\psi$ to the relations $hx_ih^{-1}=\chi_i(h)x_i$ for all $h\in G$, we have $\chi_i=\chi'_{\sigma(i)}\cdot\psi|_G$.

Conversely, let $f\in\Aut(G)$ and $\sigma\in S_2$ such that $f(g_{\sigma(i)})=g_i^{\prime}$ and $\chi_{\sigma(i)}\cdot f^{-1}=\chi_{i}^{\prime}$. Then one can extend $f$ to a Hopf algebra isomorphism from $\dH^k(\D)$ to $\dH^k(\D')$ by $f(x_i)=x'_{\sigma^{-1}(i)}$.
\end{proof}

\begin{pro}\label{pro:non-isomorphic-rank-2}
Let $\D:=\D(G,\chi_1,\chi_2,g_1,g_2)$,  $\D':=\D(G,\chi'_1,\chi'_2,g'_1,g'_2)$ and $i,j\in\I_{1,8}$.
If $i\neq j$, then $\dH^i(\D)\not\cong\dH^j(\D')$ as Hopf algebras.
\end{pro}
\begin{proof}
We show that $\dH^1(\D)\not\cong\dH^k(\D')$ for $k\in\I_{2,8}$. Set $x_1:=x$ and $x_2:=y$ for convenience. Assume that $\psi:\dH^1(\D)\rightarrow \dH^k(\D')$ for $k\in\I_{2,8}$ is an isomorphism of Hopf algebras. Then $\psi|_{G}\in\Aut(G)$ and $\psi(x_i)\in\Pp_{1,\psi(g_i)}(\dH^k(\D'))$.

If $\chi'_1= \chi'_2=\chi$ and $g'_1=g'_2=g$, then as shown in the proof of Proposition \ref{pro:isomorphism-rank-2}, $g_1=g_2$ and $\chi_1=\chi_2$  and there exist some $\alpha_i,\beta_i\in\K$ for $i\in\I_{1,3}$ such that
\begin{align*}
\psi(x_1)=\alpha_1x'_1+\alpha_2x'_2+\alpha_3(1-g),\quad \psi(x_2)=\beta_1x'_1+\beta_2x'_2+\beta_3(1-g),\quad \alpha_1\beta_2\neq \beta_1\alpha_2,
\end{align*}
with conditions: $\alpha_3=0=\beta_3$ if $g=1$.  Applying $\phi$ to the relations $[x_1,x_2]=0$, $x_1^p=0$ and $x_2^p=0$, we see that
\begin{align*}
[x'_1,x'_2]=0,\quad(\alpha_1x'_1+\alpha_2x'_2)^p+\alpha_3^p(1-g^p)=0=(\beta_1x'_1+\beta_2x'_2)^p+\beta_3^p(1-g^p).
\end{align*}
Then from the defining relations in $\dH^k(\D')$, one can check easily that $\alpha_1\beta_2-\alpha_2\beta_1=0$, a contradiction.

If $\chi'_1\neq \chi'_2$ or $g'_1\neq g'_2$, then there is $\sigma\in S_2$ and $\alpha_1,\alpha_2\in\K-\{0\},\beta_1,\beta_2\in\K$   such that
\begin{align*}
\psi(g_{i})=g'_{\sigma(i)},\quad \psi(x_i)=\alpha_ix'_{\sigma(i)}+\beta_i(1-g'_{\sigma(i)}),
\end{align*}
where $\beta_i=0$, if $g'_{\sigma(i)}=1$. Since $(x'_1)^p\neq 0$, $(x'_2)^p\neq 0$ or $[x'_1,x'_2]_c\neq 0$, from $[x_1,x_2]=0$, $x_1^p=0$ and $x_2^p=0$, we have
$\alpha_1\alpha_2=0$, a contradiction.

Similarly, for any $i\in\I_{2,7}$, one can show that $\dH^{i}(\D)\not\cong\dH^k(\D')$ for all $k\in\I_{i+1,8}$.
\end{proof}

\begin{defi}
 For a QPYD-datum $\D:=\D(G,\chi_1,\chi_2,g_1,g_2)$  with $p\nmid \ord(G)$, let $\dH$ be a Hopf algebra such that $\gr \dH\cong\BN(V,\D)\sharp\K[G]$.
\end{defi}

\begin{lem}\label{lem:Nichols-liftings-hx-chi1hhx}
There exists an epimorphism of Hopf algebras from $T(V,\D)\sharp\K[G]$ to $\dH$.
\end{lem}
\begin{proof}
Set $x_1:=x$ and $x_2=y$ for convenience. Since $x_i\in\Pp_{1,g_i}(\dH)$ for $i\in\I_{1,2}$, it follows that there exist two maps $f_1,f_2$ from $G$ to $\K$ such that $hx_ih^{-1}=\chi_i(h)x_i+f_i(h)(1-g_i)$ with conditions: $f_i=0$ if $g_i=1$.

If $g_i\neq 1$ and $g_1\neq g_2$, then there is an exact sequence of $G$-modules:
\begin{align*}
\K\{1-g_i\}\hookrightarrow\Pp_{1,g_i}(\dH)\twoheadrightarrow\K\{x_i\}.
\end{align*}
Since $p\nmid\ord(G)=m$, $\K[G]$  is semisimple and so the sequence is split, which implies that $\Pp_{1,g_i}(\dH)=\K\{1-g_i\}\oplus\K\{x_i\}$ as $G$-modules. Hence we can choose $f_i=0$.

If $g_1=g_2\neq 1$, then there is an exact sequence of $G$-modules:
\begin{align*}
\K\{1-g_1\}\hookrightarrow\Pp_{1,g_1}(\dH)\twoheadrightarrow\K\{x_1\}\oplus\K\{x_2\}.
\end{align*}
The sequence is also split since $\K[G]$ is semisimple, and hence we can choose $f_1=f_2=0$.
\end{proof}
\begin{rmk}
By Lemma \ref{lem:Nichols-liftings-hx-chi1hhx}, it remains to determine liftings of the relations in $\BN(V,\D)$.
\end{rmk}
\begin{lem}\label{lem:liftings-xy-yx-xp-yp}
\begin{itemize}
\item[(i)] $[x,y]_c\in\Pp_{1,g_1g_2}(\dH)$.
\item[(ii)] $x^p\in\Pp_{1,g_1^p}(\dH)$, $y^p\in\Pp_{1,g_2^p}(\dH)$.
\end{itemize}
\end{lem}
\begin{proof}
Since $x\in\Pp_{1,g_1}(\dH)$ and $y\in\Pp_{1,g_2}(\dH)$, it follows that $xy-q_{12}yx\in\Pp_{1,g_1g_2}(\dH)$. By Lemma \ref{lem:Nichols-liftings-hx-chi1hhx}, we have $g_1x=xg_1$ and $g_2y=yg_2$. Then by Theorem \ref{thm:q-binomial}, $x^p\in\Pp_{1,g_1^p}(\dH)$, $y^p\in\Pp_{1,g_2^p}(\dH)$.
\end{proof}

\begin{pro}\label{pro:liftings-dH-dimP(dH)=2}
Assume that $\dim\Pp(\dH)=2$. Then there exists $k\in\I_{1,8}-\{6,7\}$  such that $\dH\cong\dH^k(\D)$.
\end{pro}
\begin{proof}
By assumption, we have $g_1=g_2=1$ and so $\Pp(\dH)=V$. Then by Lemma \ref{lem:liftings-xy-yx-xp-yp}, $xy-yx,x^p,y^p\in\Pp(\dH)$. Since $\Pp(\dH)=\K\{x,y\}$, there exist $\alpha_1,\alpha_2,\beta_1,\beta_2,\gamma_1,\gamma_2\in\K$ such that
 \begin{align}\label{eq:gg11-relations}
 x^p=\alpha_1 x+\alpha_2 y,\quad y^p=\beta_1 x+\beta_2 y,\quad xy-yx=\gamma_1 x+\gamma_2 y.
\end{align}

Consider the adjoint action of $\K[G]$, we have the following conditions:
\begin{itemize}
  \item $\alpha_1=0$, if $\chi_1^{p-1}\neq \epsilon$;
  \item $\alpha_2=0$, if $\chi_1^p\neq \chi_2$;
  \item $\beta_1=0$, if  $\chi_2^p\neq \chi_1$;
  \item $\beta_2=0$, if $\chi_2^{p-1}\neq \epsilon$;
  \item $\gamma_1=0$, if $\chi_2\neq\epsilon$;
  \item $\gamma_2=0$, if $\chi_1\neq\epsilon$.
\end{itemize}

Let $K$ be the subalgebra of $\dH$ generated by $\Pp(\dH)$. Then Lemma \ref{lem:Nichols-liftings-hx-chi1hhx} and equations \eqref{eq:gg11-relations} imply that $K$ is a Hopf subalgebra of $\dH$. Furthermore, $\dH\cong K\sharp\K[G]$. Observe that $K$ is a usual connected Hopf algebras of dimension $p^2$. Indeed, $K$ is isomorphic to the restricted universal enveloping algebra $U(\Pp(H))$ of $\Pp(H)$. Then by \cite[Proposition A.3]{W1}, $K$ is isomorphic to one of the following Hopf algebras
\begin{enumerate}
\item[(1)] $\K[x,y]/(x^p,y^p)$,
 \item[(2)] $\K[x,y]/(x^p-x,y^p)$,
 \item[(3)] $\K[x,y]/(x^p-y,y^p)$,
 \item[(4)] $\K[x,y]/(x^p-x,y^p-y)$,
  \item[(5)] $\K\langle x,y\rangle/(x^p-x,y^p,[x,y]-y)$.

\end{enumerate}

\textbf{Case 1: } Assume that $\chi_1=\chi_2=\epsilon$. Then by Lemma \ref{lem:Nichols-liftings-hx-chi1hhx}, for any $h\in G$, $hx=xh$ and $hy=yh$. Therefore,  $\dH\cong U(\Pp(H))\otimes \K[G]$ and consequently $\dH\cong\dH^i(\chi_1,\chi_2,g_1,g_2)$ for $i\in\I_{1,5}$.

\textbf{Case 2: }  Assume that $\chi_1=\epsilon$ and $\chi_2\neq\epsilon$ $(\chi_2^p\neq\epsilon)$. Then $\alpha_2=\gamma_1=0=\beta_1$, that is, $x^p=\alpha_1x$, $y^p=\beta_2 y$ and $xy-yx=\gamma_2y$.

By induction, we have
\begin{align*}
x^ny=y(x+\gamma_2)^n,\quad xy^n=y^nx+n\gamma_2y^n.
\end{align*}
In particular, $x^py=yx^p+\gamma_2^py$ and $xy^p=y^px$. Then
\begin{align*}
(x^p)y&=(\alpha_1x)y=\alpha_1yx+\alpha_1\gamma_2y,\quad x^{p-1}(xy)=yx^p+\gamma_2^py=\alpha_1yx+\gamma_2^py,\\
x(y^p)&=\beta_2xy=\beta_2yx+\beta_2\gamma_2y,\quad (xy)y^{p-1}=y^px=\beta_2yx.
\end{align*}
Hence the verification of $(x^p)y=x^{p-1}(xy)$, $x(y^p)=(xy)y^{p-1}$ and $(y^p)y=y(y^p)$ imposes the conditions
\begin{align*}
\alpha_1\gamma_2-\gamma_2^p=0, \quad\beta_2\gamma_2=0.
\end{align*}

If $\gamma_2=0$, then we can take $\alpha_1,\beta_2\in\I_{0,1}$ by rescaling $x,y$. If $\alpha_1=\beta_2\in\I_{0,1}$, then $\dH\cong\dH^1(\chi_1,\chi_2,g_1,g_2)$ or $\dH^4(\chi_1,\chi_2,g_1,g_2)$. If $\alpha_1-1=0=\alpha_2$, then $\dH\cong\dH^2(\chi_1,\chi_2,g_1,g_2)$. If $\alpha_1=0=\alpha_2-1$, then $\chi_2^{p-1}=\epsilon$ and hence $\dH\cong\dH^2(\chi_2,\chi_1,g_2,g_1)$.

If $\gamma_2\neq 0$, then $\beta_2=0$ and $\alpha_1=\gamma_2^{p-1}$. Hence we can take $\alpha_1=\gamma_1=1$ via the linear translation $x\mapsto \gamma_2^{-1}x$ and hence $\dH\cong\dH^5(\chi_1,\chi_2,g_1,g_2)$.

\textbf{Case 3: }  Assume that $\chi_1\neq\epsilon$ and $\chi_2=\epsilon$. Then by swapping $(\chi_1,g_1,x)$ and $(\chi_2,g_2,y)$, it is the last case.

\textbf{Case 4: }  Assume that $\chi_1=\chi_2\neq\epsilon$.  Then  $\gamma_1=\gamma_2=0$, that is, $x^p=\alpha_1x+\alpha_2y$, $y^p=\beta_1x+\beta_2y$, $xy-yx=0$. If $\chi_1^{p-1}\neq\epsilon$, then   $\alpha_i=\beta_i=0$ for $i\in\I_{1,2}$ and hence $\dH\cong\dH^1(\chi_1,\chi_2,g_1,g_2)$.
If $\chi_1^{p-1}=\epsilon$, then one can check easily that for any isomorphism $\psi\in\Aut(U(\Pp(\dH)))$, there is $\phi\in\Aut(\dH)$ (e.g. $\psi\sharp\id$) such that $\phi|_{U(\Pp(H)}=\psi$, and then by \cite[Proposition A.3.]{W1}, $\dH\cong\dH^i(\chi_1,\chi_2,g_1,g_2)$ for $i\in\I_{1,4}$. Indeed, for any $\psi\in\Aut(U(\Pp(\dH)))$, there are some $a_1,a_2,b_1,b_2\in\K$ such that
\begin{align*}
\psi(x)=a_1x+a_2y,\quad \psi(y)=b_1x+b_2y,\quad a_1b_2-a_2b_1\neq 0.
\end{align*}
Since $x,y,x^p,y^p\in\Pp(\dH)^{\chi_1}=\{t\in\Pp(\dH)\mid  hth^{-1}=\chi_1(h)t, \forall h\in G\}$, it follows that $\psi$ is also an isomorphism of Hopf algebras in ${}_{G}^{G}\mathcal{YD}$ and hence $\psi\sharp\id\in\Aut(\dH)$.

\textbf{Case 5: } Assume that $\chi_1\neq\epsilon, \chi_2\neq\epsilon$ and $\chi_1\neq\chi_2$. Then $\gamma_1=0=\gamma_2$. that is, $x^p=\alpha_1x+\alpha_2y$, $y^p=\beta_1x+\beta_2y$, $xy-yx=0$.

If $\chi_1^{p-1}=\epsilon=\chi_2^{p-1}$, then $\chi_1^{p}=\chi_1\neq\chi_2$ and $\chi_2^p=\chi_2\neq\chi_1$. Hence $\alpha_2=\beta_1=0$,  and by rescaling $x,y$, we can take $\alpha_1,\beta_2\in\I_{0,1}$. If $\alpha_1=\beta_2\in\I_{0,1}$, then $\dH\cong\dH^1(\chi_1,\chi_2,g_1,g_2)$ or $\dH^4(\chi_1,\chi_2,g_1,g_2)$. If $\alpha_1-1=0=\alpha_2$, then $\dH\cong\dH^2(\chi_1,\chi_2,g_1,g_2)$. If $\alpha_1=0=\alpha_2-1$, then $\dH\cong\dH^2(\chi_2,\chi_1,g_2,g_1)$.

If $\chi_1^{p-1}=\epsilon$ and $\chi_2^{p-1}\neq\epsilon$, then $\chi_1^p\neq \chi_2$ and $\chi_2^{p}\neq\chi_1$. Indeed, if $\chi_2^p=\chi_1$, then $\chi_2^p=\chi_1^p$ and so $\chi_1=\chi_2$, a contradiction. Therefore, $\alpha_2=0=\beta_2=\beta_1$. Then by rescaling $x$,  we take $\alpha_1\in\I_{0,1}$ and hence $\dH\cong\dH^i(\chi_1,\chi_2,g_1,g_2)$ for $i\in\I_{1,2}$.

If $\chi_1^{p-1}\neq\epsilon$ and $\chi_2^{p-1}=\epsilon$, then by swapping $(\chi_1,g_1,x)$ and $(\chi_2,g_2,y)$, it is the last case.

If $\chi_1^{p-1}\neq\epsilon$ and $\chi_2^{p-1}\neq\epsilon$, then $\alpha_1=\beta_2=0$. If $\chi_1^p\neq\chi_2$ and $\chi_2^p\neq\chi_1$, then $\alpha_2=0=\beta_1$ and hence $\dH\cong\dH^1(\chi_1,\chi_2,g_1,g_2)$.
If $\chi_1^p=\chi_2$ and $\chi_2^p\neq\chi_1$, then $\beta_1=0$ and $\alpha_2\in\I_{0,1}$ by rescaling $x$, which implies that $\dH\cong\dH^i(\chi_1,\chi_2,g_1,g_2)$ for $i\in\{0,3\}$. If $\chi_1^p\neq\chi_2$ and $\chi_2^p=\chi_1$, then by swapping $(\chi_1,g_1,x)$ and $(\chi_2,g_2,y)$, it is the last case. If $\chi_1^p=\chi_2$ and $\chi_2^p=\chi_1$, then $x^p=\alpha_2y$ and $y^p=\beta_1x$ for $\alpha_2,\beta_1\in\K$. If $\alpha_2=0$, then by rescaling $y$, we can take $\beta_1\in\I_{0,1}$ and hence $\dH\cong\dH^k(\chi_1,\chi_2,g_1,g_2)$ for $k\in\{1, 3\}$.
If $\beta_1=0$, then by swapping $x$ and $y$, it is the last case.
If $\alpha_2\neq 0$ and $\beta_1\neq0$, then we can take $\alpha_2=1=\beta_1$ via the linear translation
$x\mapsto a^{-1}x,\ y\mapsto b^{-1}y$ satisfying $a^p=\alpha_2b$ and $b^p=\beta_1a$ and hence $\dH\cong\dH^8(\chi_1,\chi_2,g_1,g_2)$.
\end{proof}

\begin{pro}\label{pro:liftings-dimP(dH)=1}
 Assume that $\dim\Pp(\dH)=1$. Then there is $k\in\I_{1,7}$ such that $\dH\cong\dH^k(\D)$.
\end{pro}
\begin{proof}
By assumption, we may assume that $g_1=1$ and $g_2\neq 1$. By Lemma \ref{lem:liftings-xy-yx-xp-yp}, $xy-yx\in\Pp_{1,g_2}(\dH)$, $x^p\in\Pp(\dH)$ and $y^p\in\Pp_{1,g_2^p}(\dH)$. Since $\Pp(\dH)=\K\{x\}$ and $\Pp_{1,g_2}(\dH)=\K\{1-g_2,y\}$,
 \begin{align*}
 x^p=\lambda_2x, \quad [x,y]=\lambda_3y+\lambda_4(1-g_2)
\end{align*}
for some $\lambda_2,\lambda_3,\lambda_4\in\K$. Consider the adjoint action of $\K[G]$, we have the conditions
\begin{itemize}
  \item $\lambda_2=0$, if $\chi_1^{p-1}\neq\epsilon$;
  \item $\lambda_3=0$, if $\chi_1\neq\epsilon$;
  \item $\lambda_4=0$, if $\chi_1\chi_2\neq\epsilon$.
\end{itemize}

Now we determine liftings of $y^p=0$ in $\gr\dH$. Since $p\nmid\ord(\G(\dH))$, it follows that $g_2^p\neq 1$. Therefore, there are the follows two cases.

\textbf{Case 1: } Suppose that $g_2^{p-1}=1$, that is, $g_2^p=g_2$. Then $y^p=\lambda_5y+\lambda_6(1-g_2^{p})$ for some $\lambda_5,\lambda_6\in\K$. Consider the adjoint action of $\K[G]$, we have conditions:
\begin{itemize}
  \item $\lambda_5=0$, if $\chi_2^{p-1}\neq\epsilon$;
  \item $\lambda_6=0$, if $\chi_2\neq\epsilon$.
\end{itemize}

By induction, for $n>0$,  we have
       \begin{align*}
       xy^n&=y^nx+n\lambda_3y^n+n\lambda_4(1-g_2)y^{n-1},\\
       x^ny&=y(x+\lambda_3)^n+\lambda_4(1-g_2)\sum_{k=0}^{n-1}(x+\lambda_3)^{n-k-1}x^k.
       \end{align*}
       In particular, we have
       \begin{align*}
       xy^p&=y^px,\\
       x^py&=yx^p+\lambda_3^py+\lambda_4(1-g_2)\sum_{k=0}^{p-1}(x+\lambda_3)^{p-k-1}x^k\\
       &=yx^p+\lambda_3^py+\lambda_4(1-g_2)\sum_{k=0}^{p-1}\sum_{i=0}^{p-1-k}\left(\begin{array}{c}p-1-k \\i \end{array}\right)
       x^{p-1-i}\lambda_3^i\\
       &=yx^p+\lambda_3^py+\lambda_4\lambda_3^{p-1}(1-g_2).
       \end{align*}
       Then
       \begin{align*}
       x(y^p)&=\lambda_5xy+\lambda_6(1-g_2^p)x=\lambda_5yx+\lambda_3\lambda_5y+\lambda_4\lambda_5(1-g_2)+\lambda_6(1-g_2^p)x,\\
       (xy)y^{p-1}&=(y^p)x=\lambda_5yx+\lambda_6(1-g_2^p)x,\\
       (x^p)y&=\lambda_2xy=\lambda_2yx+\lambda_2\lambda_3y+\lambda_2\lambda_4(1-g_2),\\
       x^{p-1}(xy)&=y(x^p)+\lambda_3^py+\lambda_4(1-g_2)\sum_{k=0}^{p-1}(x+\lambda_3)^{p-1-k}x^k\\
       &=\lambda_2yx+\lambda_3^py+\lambda_4\lambda_3^{p-1}(1-g_2).
       \end{align*}
      Then the overlaps $x(yy^{p-1})=(xy)y^{p-1}$,$x^{p-1}(xy)=(x^{p-1}x)y$ gives the conditions
      \begin{align}\label{eq:P1g-1}
      (\lambda_2-\lambda_3^{p-1})\lambda_3=0=(\lambda_2-\lambda_3^{p-1})\lambda_4, \quad\lambda_5\lambda_3=0=\lambda_5\lambda_4.
      \end{align}
      It is easy to check that other overlaps are resolvable and give no  conditions.

      Assume that $\lambda_2=0$. Then equations \eqref{eq:P1g-1} imply that $\lambda_3=0$ and we can choose $\lambda_6=0$  via the linear translation $y \mapsto y-a(1-g^{})$ satisfying $a^p-\lambda_5a=\lambda_6$. Furthermore, we can also take $\lambda_4,\lambda_5\in\I_{0,1}$ by rescaling $x,y$. Observe that $\lambda_4\lambda_5=0$.
      If $\lambda_4=0=\lambda_5$, then  $\dH\cong\dH^1(\chi_1,\chi_2,g_1,g_2)$; if $\lambda_4=0=\lambda_5-1$, then $\chi_2^{p-1}=\epsilon$, $g_2^p=g_2$ and hence  $\dH\cong\dH^{2}(\chi_2,\chi_1,g_2,g_1)$; if $\lambda_4-1=0=\lambda_5$, then $\chi_1\chi_2=\epsilon$ and   $\dH\cong\dH^6(\chi_1,\chi_2,g_1,g_2)$

      Assume that $\lambda_2\neq 0$. Then  $\chi_1^{p-1}=\epsilon$ and we can take $\lambda_2=1$ by rescaling $x$. If $\lambda_5\neq 0$, then $\chi_2^{p-1}=\epsilon$ and equations \eqref{eq:P1g-1} imply that $\lambda_3=\lambda_4=0$. By rescaling $y$, we can take $\lambda_5=1$. Furthermore, we can choose $\lambda_6=0$ via the linear translation $y \mapsto y+a(1-g_2)$ satisfying $a^p-a=\lambda_6$. Hence $\dH\cong\dH^4(\chi_1,\chi_2,g_1,g_2)$.
If $\lambda_5=0$, then $\lambda_3^p=\lambda_3$,  $(1-\lambda_3^{p-1})\lambda_4=0$ and hence we can take $\lambda_3\in\I_{0,1}$ by rescaling $x$ and take $\lambda_6=0$ via the linear translation $y \mapsto y-a(1-g)$ satisfying $a^p=\lambda_6$. If $\lambda_3=0$, then $\lambda_4=0$ and hence $\dH\cong\dH^2(\chi_1,\chi_2,g_1,g_2)$. If $\lambda_3=1$, then  we take $\lambda_4\in\I_{0,1}$ and hence $\dH\cong\dH^i(\chi_1,\chi_2,g_1,g_2)$ for $i\in\{5,7\}$.

  \textbf{Case 2: } Suppose that $g_2^{p-1}\neq 1$, that is, $g_2^{p}\neq g_2$.  Then $y^p=\lambda_7(1-g_2^{p})$ for some $\lambda_7\in\K$ with conditions: $\lambda_7=0$ if $\chi_2\neq\epsilon$.

     Similar to the last case, we have
      \begin{align*}
      [x,y^p]=0,\quad
      [x^p,y]=\lambda_3^py+\lambda_3^{p-1}\lambda_4(1-g_2^{}).
      \end{align*}
      Applying the Diamond Lemma, the verification of overlaps \eqref{eq:p2q-11-1}--\eqref{eq:p2q-11-4} amounts to conditions:
      \begin{gather*}
      \lambda_2\lambda_3-\lambda_3^p=0=\lambda_2\lambda_4-\lambda_3^{p-1}\lambda_4.
      \end{gather*}
      We take $\lambda_7=0$ via the linear translation $y \mapsto y+a(1-g^{})$ satisfying $a^p=\lambda_7$.

      If $\lambda_2=0$, then $\lambda_3=0$ and we can take $\lambda_4\in\I_{0,1}$ by rescaling $x$.
      Hence $\dH\cong\dH^i(\chi_1,\chi_2,g_1,g_2)$ for $i\in\{1,6\}$.

      If $\lambda_2\neq 0$, then we can take $\lambda_2=1$ by rescaling $x$. Hence $\lambda_3^p=\lambda_3$ and $(\lambda_3^{p-1}-1)\lambda_4=0$. We can take $\lambda_3\in\I_{0,1}$ by rescaling $x$. If $\lambda_3=0$, then $\lambda_4=0$ and hence $\dH\cong\dH^2(\chi_1,\chi_2,g_1,g_2)$. If $\lambda_3=1$, then we take $\lambda_4\in\I_{0,1}$ by rescaling $y$ and hence $\dH\cong\dH^i(\chi_1,\chi_2,g_1,g_2)$ for $i\in\{5,7\}$.
\end{proof}

\begin{pro}\label{pro:liftings-dimP1g=2}
Assume that $\dim\Pp_{1,g}(\dH)=2$ for some $g\neq 1$. Then there exists $k\in\I_{1,8}-\{5,7\}$ such that $\dH\cong\dH^k(\D)$.
\end{pro}
\begin{proof}
 By assumption, we may assume that $g_1=g_2=g\neq 1$. Then by Lemma \ref{lem:liftings-xy-yx-xp-yp}, $x^p, y^p\in\Pp_{1,g_1^p}(\dH)$ and $[x,y]\in\Pp_{1,g_1g_2}(\dH)$. Observe that $\Pp_{1,g_1}(\dH)=\K\{x\}\oplus\K\{y\}\oplus\K\{1-g_1\}$ and $\Pp_{1,g_1g_2}(\dH)=\K\{1-g_1g_2\}$. Hence
    \begin{gather*}
    x^p=\lambda_3x+\lambda_4y+\lambda_5(1-g_1^p),\\
    y^p=\lambda_6x+\lambda_7y+\lambda_8(1-g_2^p),\quad xy-yx=\lambda_9(1-g_1g_2),
    \end{gather*}
    where $\lambda_3,\ldots,\lambda_9\in\K$ with conditions: $\lambda_3=\lambda_4=\lambda_6=\lambda_7=0$, if $g_1^{p-1}\neq 1$. Consider the adjoint action of $\K[G]$, we have the following conditions:
    \begin{itemize}
      \item $\lambda_3=0$, if $\chi_1^{p-1}\neq\epsilon$;
      \item $\lambda_4=0$, if $\chi_1^p\neq\chi_2$;
      \item $\lambda_5=0$, if $\chi_1\neq\epsilon$;
      \item $\lambda_6=0$, if $\chi_2^p\neq\chi_1$;
      \item $\lambda_7=0$, if $\chi_2^{p-1}\neq\epsilon$;
      \item $\lambda_8=0$, if $\chi_2\neq\epsilon$;
      \item $\lambda_9=0$, if $\chi_1\chi_2\neq\epsilon$.

    \end{itemize}

    By induction, for $n>0$,
    \begin{align*}
    x^ny=yx^n+nx^{n-1}\lambda_9(1-g^2),\quad xy^n=y^nx+ny^{n-1}\lambda_9(1-g^2).
    \end{align*}
    In particular, $x^py=yx^p$ and $xy^p=y^px$. Then
    \begin{gather*}
    x^{p-1}(xy)=yx^p=\lambda_3yx+\lambda_4y^2+\lambda_5y(1-g^p),\\
    (x^p)y=\lambda_3xy+\lambda_4y^2+\lambda_5y(1-g^p)=\lambda_3yx+\lambda_3\lambda_9(1-g^2)+\lambda_4y^2+\lambda_5y(1-g^p),\\
    (xy)y^{p-1}=y^px=\lambda_6x^2+\lambda_7yx+\lambda_8(1-g^p)x,\\
    x(y^p)=\lambda_6x^2+\lambda_7xy+\lambda_8(1-g^p)x=\lambda_6x^2+\lambda_7yx+\lambda_9\lambda_7(1-g^2)+\lambda_8(1-g^p)x.
    \end{gather*}
    Hence the verification of \eqref{eq:p2q-11-4} amounts to
    $$\lambda_3\lambda_9=0,\quad \lambda_7\lambda_9=0.$$
     Similarly, the overlaps \eqref{eq:p2q-11-1}, \eqref{eq:p2q-11-2}, \eqref{eq:p2q-11-3} are resolvable and give no conditions. The verification of \eqref{eq:p2q-11-5} amounts to the condition
\begin{align*}
\lambda_4\lambda_9=0=\lambda_6\lambda_9.
\end{align*}
    We can choose $\lambda_9\in\I_{0,1}$ by rescaling $x,y$.

    If $\lambda_9=0$, then we can take $\lambda_5=0=\lambda_8$ via the linear translation $x \mapsto x+\alpha_1(1-g),y\mapsto y+\alpha_2(1-g)$ satisfying
    \begin{align*}
    \alpha_1^p-\lambda_3\alpha_1-\lambda_4\alpha_2=-\lambda_5,\quad
    \alpha_2^p-\lambda_6\alpha_1-\lambda_7\alpha_2=-\lambda_8.
    \end{align*}
    Indeed, since $\K$ is algebraically closed,   equations have the solutions. If $g_1^{p-1}\neq 1$, then $\lambda_3=\lambda_4=\lambda_6=\lambda_7=0$ and hence $\dH\cong\dH^1((\chi_1,\chi_2,g_1,g_2)$. Now we assume that $g_1^{p-1}=1$.

    Assume that $\chi_1\neq\chi_2$. If $\chi_1^p=\chi_1$, then $\chi_1^p\neq\chi_2$ and $\chi_2^p\neq\chi_1$, which implies that $\lambda_4=0=\lambda_6$. Indeed, if $\chi_2^p=\chi_1$, then $\chi_1^p=\chi_2^p$ and so $\chi_1=\chi_2$, a contradiction. By rescaling $x, y$, we can take $\lambda_3,\lambda_7\in\I_{0,1}$ and hence $\dH\cong\dH^i(\chi_1,\chi_2,g_1,g_2)$ for $i\in\{1,2,4\}$. If $\chi_2^p=\chi_2$, then by exchanging $x$ with $y$, we have $\dH\cong\dH^i(\chi_1,\chi_2,g_1,g_2)$ for $i\in\{1,2,4\}$. If $\chi_1^p\neq \chi_1$ and $\chi_2^p\neq \chi_2$,  then $\lambda_3=0=\lambda_7$. If $\lambda_4=0$, then by rescaling $y$, we can take $\lambda_6\in\I_{0,1}$ and hence $\dH\cong\dH^k(\chi_1,\chi_2,g_1,g_2)$ for $k\in\{1, 3\}$. If $\lambda_6=0$, then by swapping $x$ and $y$, it is the last case.
If $\lambda_4\neq 0$ and $\lambda_6\neq 0$, then we can take $\lambda_4=1=\lambda_6$ by rescaling $x,y$ and hence $\dH\cong\dH^8(\chi_1,\chi_2,g_1,g_2)$.

    Assume that $\chi_1=\chi_2$. If $\chi_1^p\neq\chi_1$, then $\chi_1^p\neq \chi_2$, $\chi_2^p\neq \chi_1$ and $\chi_2^p\neq\chi_2$ and hence $\lambda_3=\lambda_4=\lambda_6=\lambda_7=0$, which implies that $\dH\cong\dH^1(\chi_1,\chi_2,g_1,g_2)$.
    If $\chi_1^p=\chi_1$, then $\chi_1^p=\chi_2^p=\chi_1=\chi_2$.
    It is clear that  $\dH= S(\lambda_3,\lambda_4,\lambda_6,\lambda_7)\sharp\K[G]$, where $S(\lambda_3,\lambda_4,\lambda_6,\lambda_7):=\K[x, y]/(x^p-(\lambda_3x+\lambda_4y),  y^p-(\lambda_6x+\lambda_7y))\in{}_{G}^{G}\mathcal{YD}$ with
    \begin{align*}
    h\cdot x=\chi_1(h)x, \quad h\cdot y=\chi_1(h)y, \quad \delta(x)=g_1\otimes x, \quad \delta(y)=g_1\otimes y;\\
    \Delta_S(x)=x\otimes 1+1\otimes x, \quad\Delta_S(y)=y\otimes 1+1\otimes y.
    \end{align*}
  Let $S:=S(\lambda_4,\lambda_5,\lambda_6,\lambda_7)$ for short. Clearly, $S$ is a usual connected Hopf algebra of dimension $p^2$. Observe that $x,y,x^p,y^p\in\Pp_{1,g_1}^{\chi_1}(\dH)=\{t\in\Pp_{1,g}(\dH)\mid hth^{-1}=\chi_1(h)t,\forall h\in G\}$. Then one can easily check that  for any isomorphism $\psi\in\Aut(S)$, $\psi$ is also an isomorphism of Hopf algebras in ${}_G^G\mathcal{YD}$ and hence there is $\phi\in\Aut(\dH)$ (e.g. $\psi\sharp\id$) such that $\phi|_{S}=\psi$; Then by \cite[Proposition A.3]{W1}, $\dH\cong\dH^i(\chi_1,\chi_2,g_1,g_2)$ for $i\in\I_{1,4}$.

   If $\lambda_9=1$, then $\chi_1\chi_2=\epsilon$, $\lambda_3=0=\lambda_7$ and $\lambda_4=0=\lambda_6$. Furthermore, we can take $\lambda_5=0=\lambda_8$ via the linear translation $x\mapsto x-a(1-g),\ y\mapsto y-b(1-g)$ satisfying $a^p =\lambda_5$ and $b^p =\lambda_8$. Hence  $\dH\cong\dH^6(\chi_1,\chi_2,g_1,g_2)$.
\end{proof}

\begin{pro}\label{pro:liftings-dimP1g=1}
 Assume that $g_1\neq 1, g_2\neq 1,g_1\neq g_2$. Then there exists $k\in\I_{1,8}-\{5,7\}$ such that $\dH\cong\dH^k(\D)$.
\end{pro}
\begin{proof}
By assumption, $g_1g_2^{-1}\neq 1$, $\Pp_{1,g_1}(\dH)=\K\{x\}\oplus\K\{1-g_1\}$ and $\Pp_{1,g_2}(\dH)=\K\{y\}\oplus\K\{1-g_2\}$ and $\Pp_{1,g_1g_2}(\dH)=\K\{1-g_1g_2\}$. By Lemma \ref{lem:liftings-xy-yx-xp-yp}, $x^p\in\Pp_{1,g_1^p}(\dH), y^p\in\Pp_{1,g_2^p}(\dH)$ and $[x,y]_c\in\Pp_{1,g_1g_2}(\dH)$. Then
\begin{align*}
[x,y]_c=\lambda_0(1-g_1g_2),
\end{align*}
for $\lambda_0\in\K$. Consider the adjoint action of $\K[G]$, we have the condition: $\lambda_0=0$ if $\chi_1\chi_2\neq\epsilon$. Furthermore, if $\chi_1\chi_2=\epsilon$, then $\chi_1(g_2)=\chi_2(g_1)=1$.

\textbf{Case 1: } Suppose that $g_1^p=g_1(\neq g_2)$. Then $g_2^p\neq g_1$. Indeed, if $g_2^p=g_1$, then $g_1^p=g_2^p$, that is, $(g_1g_2^{-1})^p=1$, which implies that $p\mid m=|G|$, a contradiction. Therefore, there are  $\lambda_2,\lambda_3,\lambda_4,\lambda_5\in\K$ such that
 \begin{align*}
x^p=\lambda_1x+\lambda_3(1-g_1^p),\quad y^p=\lambda_2y+\lambda_4(1-g_2^p),
\end{align*}
 with conditions:
\begin{itemize}
  \item $\lambda_1=0$,  if $\chi_1^{p-1}\neq\epsilon$;
  \item $\lambda_2=0$,  if $\chi_1^{p-1}\neq\epsilon$ or $g_2^{p-1}\neq 1$;
  \item $\lambda_{i+2}=0$,  if $\chi_i\neq\epsilon$ for $i\in\I_{1,2}$.
\end{itemize}
Furthermore, if $\lambda_3\neq 0$ or $\lambda_4\neq 0$, then $\chi_1(g_2)=\chi_2(g_1)=1$. Then we can take $\lambda_3=0=\lambda_4$ via the linear translation $x\mapsto x+a(1-g_1),y\mapsto b(1-g_2)$ satisfying $a^p-\lambda_1a=\lambda_3,b^p-\lambda_2b=\lambda_4$. We can take $\lambda_1,\lambda_2\in\I_{0,1}$ by rescaling $x,y$.

If $\lambda_1=\lambda_2\in\I_{0,1}$, then $\dH\cong\dH^1(\chi_1,\chi_2,g_1,g_2)$ or $\dH^4(\chi_1,\chi_2,g_1,g_2)$. If $\lambda_1-1=0=\lambda_2$, then $\dH\cong\dH^2(\chi_1,\chi_2,g_1,g_2)$. If $\lambda_1=0=\lambda_2-1$, then $\dH\cong\dH^2(\chi_2,\chi_1,g_2,g_1)$.

\textbf{Case 2: } Suppose that $g_1^p=g_2 (\neq g_1)$. Then $\chi_1(g_2)=1=\chi_2(g_1)$ and  $g_2^p\neq g_2$. Indeed, if $g_2^p=g_2$, then $g_1^p=g_2^p$ and hence $g_1=g_2$, a contradiction. Therefore,
\begin{align*}
x^p=\lambda_1y+\lambda_3(1-g_2),\quad y^p=\lambda_2x+\lambda_4(1-g_2^p),
\end{align*}
for $\lambda_2,\lambda_3,\lambda_4,\lambda_5\in\K$ with conditions:
\begin{itemize}
  \item $\lambda_1=0$,  if $\chi_1^{p}\neq\chi_2$;
  \item $\lambda_2=0$,  if $\chi_2^{p}\neq\chi_1$ or $g_2^{p}\neq g_1$;
  \item $\lambda_{i+2}=0$,  if $\chi_i\neq\epsilon$ for $i\in\I_{1,2}$.
\end{itemize}
 The verification of $[x^p,x]=0$ and $[y^p,y]=0$ amounts to the conditions
 \begin{align*}
 \lambda_0\lambda_1=0=\lambda_0\lambda_2.
 \end{align*}

If $\lambda_2=0$, then we can take $\lambda_3=0=\lambda_4$ via the linear translation $x\mapsto x+a(1-g_1),y\mapsto b(1-g_2)$ satisfying $a^p-\lambda_1b=\lambda_3,b^p=\lambda_4$. Then we can take $\lambda_0,\lambda_1\in\I_{0,1}$ by rescaling $x,y$. If $\lambda_0=0$, then $\lambda_1\in\I_{0,1}$ and hence  $\dH\cong\dH^1(\chi_1,\chi_2,g_1,g_2)$ or $\dH^3(\chi_1,\chi_2,g_1,g_2)$. If $\lambda_0=1$, then $\lambda_1=0$ and hence $\dH\cong\dH^6(\chi_1,\chi_2,g_1,g_2)$.

If $\lambda_2\neq 0$, then $\lambda_0=0$, $\chi_2^p=\chi_1$ and $g_2^p=g_1$. Furthermore, we can take $\lambda_3=0=\lambda_4$ via the linear translation $x\mapsto x+a(1-g_1),y\mapsto y+b(1-g_2)$ satisfying $a^p-\lambda_1b=\lambda_3, b^p-\lambda_2a=\lambda_4$.
Then we can take $\lambda_2=1$ and $\lambda_1\in\I_{0,1}$ by rescaling $x,y$. Therefore, $\dH\cong\dH^3(\chi_2,\chi_1,g_2,g_1)$ or $\dH^8(\chi_1,\chi_2,g_1,g_2)$.

\textbf{Case 3: } Suppose that $g_1^p\neq g_1, g_1^p\neq g_2$. If $g_2^p=g_2$, then by swapping $x$ and $y$, it is the case 1. If $g_2^p=g_1$, then by swapping $x$ and $y$, it is the case 2. Therefore, we may assume that $g_2^p\neq g_1, g_1^p\neq g_2$ and hence there exist $\lambda_3,\lambda_4\in\K$ such that
\begin{align*}
x^p=\lambda_3(1-g_1^p),\quad y^p=\lambda_4(1-g_2^p).
\end{align*}

If $\chi_1\chi_2=\epsilon$, then $\chi_1(g_2)=\chi_2(g_1)=1$ and we can take $\lambda_3=0=\lambda_4$ via the linear translation $x\mapsto x+a(1-g_1), y\mapsto y+b(1-g_2)$ satisfying $a^p=\lambda_3,b^p=\lambda_4$. Therefore, $\dH\cong\dH^1(\chi_1,\chi_2,g_1,g_2)$ or $\dH^6(\chi_1,\chi_2,g_1,g_2)$.

If $\chi_1\chi_2\neq\epsilon$, then $\lambda_0=0$ and $\lambda_3\lambda_4=0$. If $\chi_1=\epsilon$ or $\chi_2=\epsilon$, then $\chi_1(g_2)=\chi_2(g_1)=1$ and we can take $\lambda_3=0=\lambda_4$ via the linear translation $x\mapsto x+a(1-g_1), y\mapsto y+b(1-g_2)$ satisfying $a^p=\lambda_3,b^p=\lambda_4$; otherwise consider the adjoint action of $\K[G]$, we have $\lambda_3=0=\lambda_4$. Consequently, $\dH\cong\dH^1(\chi_1,\chi_2,g_1,g_2)$.
\end{proof}

\begin{thm}\label{thm:liftingsofdH}
There exists $k\in\I_{1,8}$ and a QPYD-datum $\D$ such that $\dH\cong\dH^k(\D)$ as Hopf algebras.
\end{thm}
\begin{proof}
By definition, $\dH\cong\gr\dH$ as coalgebras. According to  the spaces of skew-primitive elements, there are four possibilities: (1) $\dim\Pp(\dH)=2$,  (2) $\dim\Pp(\dH)=1$, (3)  $\dim\Pp_{1,g}(\dH)=2$ for some $g\neq 1\in G$, (4) $\dim\Pp_{1,g_1}(\dH)=1=\dim\Pp_{1,g_2}(\dH)$ with $g_1\neq 1,g_2\neq 1$. Consequently, it follows by Propositions \ref{pro:liftings-dH-dimP(dH)=2}, \ref{pro:liftings-dimP(dH)=1}, \ref{pro:liftings-dimP1g=2} and \ref{pro:liftings-dimP1g=1}.
\end{proof}

\section{The diagrams are not Nichols algebras}\label{secNonNicholsalgebra}
Let $\Char\K=p>0$. In this section, we study pointed Hopf algebras of dimension $p^2m$ whose diagrams are not Nichols algebras and have dimension $p^2$. We first classify the coradically graded ones, then determine the liftings and finally determine the isomorphism classes.
\subsection{The coradically graded pointed Hopf algebras of dimension $p^2m$}

Let $\cH$ be a coradically graded pointed Hopf algebra of dimension $p^2m$ that is not generated by group-like elements and skew-primitive elements, where $\G(\cH)$ is of order $m$ with $(p,m)=1$. We study the structure of $\cH$.
\begin{lem}\cite[4.2~Case (C)]{NW}\label{lem:H-coh-1}
Let $\Char\K=p>0$ and $\BN(V)=\K[x]/(x^p)$ with $V:=\K\{x\}$. Then $\dim H^2(\K,\BN(V))=1$ with a generator $\omega_0(x)$, where
$$\omega_0(x)=\sum_{i=1}^{p-1}\frac{(p-1)!}{i!(p-i)!}x^i\otimes x^{p-i}.$$
\end{lem}

\begin{lem}\label{lem:p2dividedimR}
Let $R:=\oplus_{n=0}^{\infty}R(n)$ be a strictly graded Hopf algebra in ${}_G^G\mathcal{YD}$ for some group algebra $\K[G]$ such that $\BN(R(1))=p$. Assume that $R$ is not a Nichols algebra. Then $R$ contains a Hopf subalgebra $S$ of dimension $p^2$, where  $S\cong\K[x,y]/(x^p,y^p)\in{}_{G}^{G}\mathcal{YD}$ with
\begin{align*}
\Delta_R(x)=x\otimes 1+1\otimes x, \Delta_R(y)=y\otimes 1+1\otimes y+\omega_0(x),
\end{align*}
with $x\in R(1)_{g}^{\chi}$ and $y\in R(p)_{g^p}^{\chi^p}$ for some $g\in\mathcal{Z}(\G(\cH))$ and $\chi\in\widehat{\G(\cH)}$ satisfying $\chi(g)=1$.
\end{lem}
\begin{proof}
Let $V:=R(1)$ for short. By assumption, $\dim V=1$ with trivial braiding. Set $V:=\K\{x\}$. Then $\BN(V)=\K[x]/(x^p)$, which is a usual connected Hopf algebra of dimension $p$. Furthermore, by Remark \ref{rmk:dimV=1}, $\BN(V)\in{}^{G}_{G}\mathcal{YD}$ with $x\in V_{g}^{\chi}$, where $g$ lies in the center of $G$, $\chi\in \widehat{G}$ satisfying $\chi(g)=1$.

By Lemma \ref{lem:H-coh-1},  $\dim H^2(\K,\BN(V))=1$ with a generator $\omega_0(x)$. Since $\omega_0(x)\in\sum_{i=1}^{p-1}R(i)\otimes R(p-i)$ and $\{x^i\}_{i=0}^{p-1}$ is a basis of $\BN(V)$,    the total degree of $\omega_0(x)$ is $p$ and $\BN(V)(p)=0$.
 Then by Theorem \ref{thm:non-primitive-genetrators-Hoch}, there is an isomorphism  in ${}_{G}^{G}\mathcal{YD}$:
\begin{gather*}
d^1:R(p)\rightarrow H^{2,p}(\K,\BN(V)).
\end{gather*}
 Therefore, $\dim R(p)=1$ and there exists a basis $\{y\}$ of $R(p)$ such that $d^1(y)=\omega_0(x)$, that is,
\begin{align*}
\Delta_R(y)=y\otimes 1+1\otimes y+\omega_0(x).
\end{align*}
Furthermore, for any $h\in G$,
\begin{gather*}
\delta(\omega_0)=g^{p}\otimes \omega_0,\quad h\cdot\omega_0=\chi^p(h)\omega_0;\quad
\delta(y)=g^{p}\otimes y,\quad h\cdot y=\chi^{p}(h)y.
\end{gather*}
Then using the braiding formula in ${}_{G}^{G}\mathcal{YD}$, we have $c(a\otimes b)=b\otimes a$ for $a,b\in\{x,y\}$. Furthermore, it follows by a direct computation that $xy-yx\in\Pp(R)=R(1)\cap R(p+1)$ and hence $xy-yx=0$ in $R$. Then
\begin{align*}
\Delta(y^p)&=(y\otimes 1+1\otimes y+\omega_0(x))^p=y^p\otimes 1+1\otimes y^p+\omega_0(x)^p\\
& =y^p\otimes 1+1\otimes y^p+\omega_0(x^p)=y^p\otimes 1+1\otimes y^p,
\end{align*}
which implies that $y^p\in R(1)\cap R(p^2)$ and hence $y^p=0$ in $R$.  Consequently,  $S:=K[x,y]/(x^p,y^p)$ is the Hopf subalgebra of $R$ generated by $x$ and $y$ in ${}_{G}^{G}\mathcal{YD}$.
\end{proof}
\begin{cor}\label{cor:p2dividedimR}
With the notations in Lemma \ref{lem:p2dividedimR},  $p^2|\dim R$. In particular, if $\dim R=p^2$, then $R\cong S$ in ${}_{G}^{G}\mathcal{YD}$.
\end{cor}
\begin{proof}
It follows directly by Lemma \ref{lem:p2dividedimR} and \cite[Proposition 2.16]{Scha01}.
\end{proof}
\begin{rmk}\label{rmk:R-p2m-1}
  The Hopf algebra $S:=\K[x^p,y^p]$ in ${}_{G}^{G}\mathcal{YD}$ is   a usual co-commutative graded connected Hopf algebra of dimension $p^2$ classified in \cite{Hen95} (see also \cite{W1,NW}).
\end{rmk}

\begin{thm}\label{thm:p2m-nonNichols-1}
Let $\cH$ be a coradically graded  pointed Hopf algebra with $\Char\K=p$ of dimension $p^2m$ such that $\dim \cH_0=m$. Suppose that $\cH$ is not generated by group-like elements and skew-primitive elements.  Then
$\cH$ is generated by generators of $\G(\cH)$, and $x, y$, subject to the relations in $\G(\cH)$ and the following:
\begin{gather}
hx=\chi(h)xh,\qquad hy=\chi^p(h)yh, \qquad \forall h\in\G(\cH),\\
[x,y]=0,\\
x^p=0,\qquad y^p=0,
\end{gather}
where $x\in\Pp_{1,g}(\cH)$ for some $g\in\mathcal{Z}(\G(\cH))$ and $\chi\in\widehat{\G(\cH)}$ such that $\chi(g)=1$ and
\begin{align*}
\Delta(y)=y\otimes 1+g^p\otimes y+\sum_{i=1}^{p-1}\frac{(p-1)!}{i!(p-i)!}x^ig^{p-i}\otimes x^{p-i}.
\end{align*}
\end{thm}
\begin{proof}
By assumption, $\cH\cong R\sharp\cH_0$, where $R=\cH^{\co\cH_0}$ is a strictly graded Hopf algebra of dimension $p^2$  in ${}_{\cH_0}^{\cH_0}\mathcal{YD}$. Since $R$ is not a Nichols algebra, it follows that $\dim\BN(R(1))=p$. Consequently, the assertion follows by Lemma \ref{lem:p2dividedimR} and Corollary \ref{cor:p2dividedimR}.
\end{proof}

\subsection{Liftings of $\cH$}
Suppose that $\Char\K=p$ and $p\nmid m$. Let $\mathbb{H}$ be a pointed Hopf algebra with abelian coradical such that $\gr\mathbb{H}\cong\cH$ as Hopf algebras and $\mathbb{H}\cong\cH$ as coalgebras.

By Theorem \ref{thm:p2m-nonNichols-1}, $\mathbb{H}$ is generated by $h_1,h_2,\cdots,h_{t}$, $x$ and $y$, where $h_i, i\in\I_{1,t}$ generate the group $\G(\bH)$ of order $m$, $x\in\Pp_{1,g}(\bH)$ for some $g\in\mathcal{Z}(\G(\bH))$ and
\begin{align*}
\Delta(y)=y\otimes 1+g^p\otimes y+\sum_{i=1}^{p-1}\frac{(p-1)!}{i!(p-i)!}x^ig^{p-i}\otimes x^{p-i}.
\end{align*}

Now we determine defining relations of $\bH$. Set $\omega_{g}(x):=\sum_{i=1}^{p-1}\frac{(p-1)!}{i!(p-i)!}x^ig^{p-i}\otimes x^{p-i}$ for short in what follows.

\begin{lem}\label{lem:lfitingsof-hx-xh}
The relation $hx-\chi(h)h=0$ for $h\in\G(\bH)$ holds in $\bH$.
\end{lem}
\begin{proof}
If $g=1$, then we have $\Pp(\bH)\cong \K\{x\}$ as $\G(\bH)$-modules, otherwise as well-known, there is an exact sequence of $\G(\bH)$-modules:
\begin{align*}
\K\{1-g\}\hookrightarrow\Pp_{1,g}(\bH)\twoheadrightarrow \K\{x\}.
\end{align*}
By the assumption that $(m,p)=1$, $\K[\G(\bH)]$ is semisimple and so the exact sequence is split, which implies that $\Pp_{1,g}(\bH)\cong\K\{1-g\}\oplus\K\{x\}$ as $\G(\bH)$-modules. Consequently, the relation $hx-\chi(h)xh=0$ holds in $\bH$ for $h\in\G(\bH)$.
\end{proof}

\begin{lem}\label{lem:lfitingsof-hy-yh}
The relation $hy-\chi^p(h)yh=0$ for $h\in \G(\bH)$  holds in $\bH$.
\end{lem}
\begin{proof}
By Lemma \ref{lem:lfitingsof-hx-xh}, $hx-\chi(h)xh=0$ for $h\in \G(\bH)$  in $\bH$. Observe that $\Delta(h)\omega_g(x)=\chi^p(h)\omega_g(x)\Delta(h)$. Then it follows by a direct computation that
 \begin{align*}
 \Delta(hy-\chi^p(h)yh)=(hy-\chi^p(h)yh)\otimes h+g^ph\otimes(hy-\chi^p(h)yh).
 \end{align*}
 Therefore, $hy-\chi^p(h)yh\in\Pp_{h,g^ph}(\bH)\cap \bH_{p-1}$.

 Assume that $g^{p-1}\neq 1$. Then $\Pp_{h,g^ph}(\bH)=\K\{h(1-g^p)\}$ and there is a map $\lambda: \G(\bH)\rightarrow\K$ such that $hy-\chi^p(h)yh=\lambda_hh(1-g^p)$. Then consider the conjugation of $\G(\bH)$, we have $\lambda_{hk}=\chi^p(k)\lambda_h+\lambda_k$. Observe that $\G(\bH)$ is abelian. Then from $\lambda_{hk}=\lambda_{kh}$, we have $[1-\chi^p(h)]\lambda_k=[1-\chi^p(k)]\lambda_h$.
 If $\chi^p(h)\neq 1$, then  we can take $\lambda_h=0$ via the linear translation $y\mapsto y-\alpha(1-g^{p})$ satisfying $(1-\chi^p(h))\alpha=\lambda_h$. If $\chi^p(h)=1$, then
 $0=[1,y]=[h^{m_h},y]=m_h\lambda_h(1-g^p)$ and so $\lambda_h=0$.

 Assume that $g^{p-1}=1$. Then $\Pp_{h,g^ph}(\bH)=\K\{h(1-g^p),xh\}$ and there are two map $\lambda, \gamma$ from $\G(\bH)$ to $\K$ such that $hy-\chi^p(h)yh=\lambda_h xh+\gamma_h h(1-g^p)$. If $g=1$, then set $\gamma_h=0$. Then consider the conjugation of $\G(\bH)$, we have $\lambda_{hk}=\chi^p(k)\lambda_h+\chi(h)\lambda_k$ and $\gamma_{hk}=\chi^p(k)\gamma_h+\gamma_k$. From $\lambda_{hk}=\lambda_{kh}$ and $\gamma_{hk}=\gamma_{kh}$, we have $[\chi(h)-\chi^p(h)]\lambda_k=[\chi(k)-\chi^p(k)]\lambda_h$ and $[1-\chi^p(h)]\gamma_k=[1-\chi^p(k)]\gamma_h$.

Let $\xi=\chi(h)$ for short.  By induction, we have
\begin{align*}
h^ny=\xi^{np}yh^n+(n)_{\xi^{1-p}}\xi^{np}\lambda_hxh^n+(n)_{\xi^p}\gamma_h(1-g^p)h^n.
\end{align*}
Then from $[h^{m_h},y]=[1,y]=0$ for $m_h=\ord(h)$, we have
\begin{align*}
(m_h)_{\xi^{1-p}}\lambda_h=0,\quad (m_h)_{\xi^p}\gamma_h=0.
\end{align*}

If $\chi^p(h)=1$, that is, $\xi^p$=1, then we have $\gamma_h=0$ since $(m_h,p)=1$; otherwise,
 we can take
 $\gamma_h=0$ via the linear translation $y\mapsto y-\alpha(1-g^{p})$ satisfying $(1-\chi^p(h))\alpha=\lambda_h$.

 If $\chi^{p-1}(h)=1$, that is, $\xi^{1-p}=1$, then $\lambda_h=0$; otherwise we can take
 $\lambda_h=0$ via the linear translation $y\mapsto y-\alpha x$ satisfying $\alpha\chi(h)(1-\chi^{p-1}(h))=\lambda_h$. One can directly verify the translations as described above are isomorphisms of Hopf algebras.
 \end{proof}

\begin{lem}\label{lem:liftingofxp}
There exists $\lambda\in\I_{0,1}$ such that $x^p=\lambda x$ in $\bH$ with condition: $\lambda=0$ if $g^{p-1}\neq 1$ or $\chi^{p-1}\neq \epsilon$.
\end{lem}
\begin{proof}
 Since $gx=xg$ in $\bH$, by Theorem \ref{thm:q-binomial},
\begin{align*}
\Delta(x^p)=(x\otimes 1+g\otimes x)^p=x^p\otimes 1+g^{p}\otimes x^p.
\end{align*}
Therefore, $x^p\in\Pp_{1,g^{p}}(\bH)$.

Assume that $g^{p-1}\neq 1$. Then $\Pp_{1,g^p}(\bH)=\K\{1-g^p\}$ and  hence $x^p=\lambda_1(1-g^p)$ for some $\lambda_1\in\K$. If $\chi\neq \epsilon$, then consider the adjoint action of $\K[G(\bH)]$, we have $\lambda_1=0$, otherwise we can take $\lambda_1=0$ via   the translation $x\mapsto x-a(1-g)$ satisfying $a^p=\lambda_1$.

Assume that $g^{p-1}=1$. Then $\Pp_{1,g^p}(\bH)=\K\{1-g^p, x\}$ and hence $x^p=\lambda_2x+\lambda_3(1-g^p)$ for some $\lambda_2,\lambda_3\in\K$.
 If $\chi^p\neq \chi$, then consider the adjoint action of $\K[\G(\bH)]$, we have $\lambda_2=0=\lambda_3$. If $\chi^p=\chi\neq \epsilon$, then $\lambda_3=0$. If $\chi=\epsilon$, then we can take $\lambda_3=0$ via the linear translation $x \mapsto x-a(1-g)$ satisfying $a^p-\lambda_2a=\lambda_3$.
By rescaling $x$, we can take $\lambda_2\in\I_{0,1}$.
\end{proof}

\begin{lem}\label{lem:liftingsof[x,y]}
There exist $\lambda,\gamma\in\K$ such that
\begin{align*}
[x,y]=\lambda x+\gamma(1-g^{p+1}),
\end{align*}
  in $\bH$ with the conditions: $\lambda=0$ if $g\neq 1$ or $\chi\neq \epsilon$; $\gamma=0$ if $\chi^{p+1}\neq\epsilon$ or $g^{p+1}=1$. In particular, $\lambda\gamma=0$.
\end{lem}
\begin{proof}
Observe that $\Delta(x)\omega_{g}(x)=\omega_{g}(x)\Delta(x)$. Then
  \begin{gather*}
 \Delta(xy-yx)=(xy-yx)\otimes 1+g^{p+1}\otimes(xy-yx),
  \end{gather*}
that is, $xy-yx\in\Pp_{1,g^{p+1}}(\bH)$.

Assume that $g=1$. Then $\Pp_{1,g^{p+1}}(\bH)=\Pp(\bH)=\K\{x\}$ and so $[x,y]=\lambda x$ for $\lambda\in\K$.
If $\chi^p\neq\epsilon$, that is, $\chi\neq\epsilon$, then consider the adjoint action of $\K[\G(\bH)]$, we have $\lambda=0$.

Assume that $g\neq 1$.  Then  $\Pp_{1,g^{p+1}}(\bH)=\K\{1-g^{p+1}\}$ and so $[x,y]=\gamma(1-g^{p+1})$ for some $\lambda\in\K$. If $\chi^{p+1}\neq\epsilon$, then consider the adjoint action of $\K[\G(\bH)]$, then $\gamma=0$.
\end{proof}

\begin{lem}\label{lem:comm.x-and-y}
Assume the situation described as above, for $n>0$,
\begin{align*}
x^ny&=yx^n+n\lambda x^n+n\gamma x^{n-1}(1-g^{1+p}),\\
xy^n&=(y+\lambda)^nx+n\gamma y^{n-1}(1-g^{p+1}).
\end{align*}
\end{lem}
\begin{proof}
It follows by induction on $n$.
\end{proof}

The following result  generalizes the relative results in \cite[Case C]{NW}.
\begin{lem}\label{lemHJJJJ}
\begin{gather*}
(\omega_{g}(x))(\AdR  y\otimes 1+g^{ p}\otimes y)^{p-1}
=-d^1_{1,g^{ p^2}}[([x,y]x^{p-1})(\AdR y)^{p-2}].
\end{gather*}
\end{lem}
\begin{proof}
Observe that $[g,x]=0=[g,y]$ in $H$. Then
\begin{align*}
  &\omega_{g}(\AdR(y\otimes 1+g^{p}\otimes y))\\
  &=[\sum_{i=1}^{p-1}\frac{(p-1)!}{i!(p-i)!}x^ig^{p-i}\otimes x^{p-i},y\otimes 1+g^{p}\otimes y]\\
  &=[\sum_{i=1}^{p-1}\frac{(p-1)!}{i!(p-i)!}x^ig^{p-i}\otimes x^{p-i},y\otimes 1]+[\sum_{i=1}^{p-1}\frac{(p-1)!}{i!(p-i)!}x^ig^{p-i}\otimes x^{p-i},g^{ p}\otimes y]\\
  &=\sum_{i=1}^{p-1}\frac{(p-1)!}{i!(p-i)!}[x^ig^{p-i},y]\otimes x^{p-i}+\sum_{i=1}^{p-1}\frac{(p-1)!}{i!(p-i)!}x^ig^{2p-i}\otimes [x^{p-i},y]\\
  &=\sum_{i=1}^{p-1}\frac{(p-1)!}{i!(p-i)!}[x^i,y]g^{p-i}\otimes x^{p-i}+\sum_{i=1}^{p-1}\frac{(p-1)!}{i!(p-i)!}x^ig^{2p-i}\otimes [x^{p-i},y]\\
  &=\sum_{i=1}^{p-1}\frac{(p-1)!}{(i-1)!(p-i)!}[x,y]x^{i-1}g^{p-i}\otimes x^{p-i}+\sum_{i=1}^{p-1}\frac{(p-1)!}{i!(p-i-1)!}x^ig^{2p-i}\otimes [x,y]x^{p-i-1}\\
  &=\sum_{i=0}^{p-2}\left(\begin{array}{c}p-1\\i\end{array}\right)[x,y]x^{i}g^{p-i-1}\otimes x^{p-i-1}+\sum_{i=1}^{p-1}\left(\begin{array}{c}p-1 \\i\end{array}\right)x^{i}g^{2p-i}\otimes [x,y]x^{p-i-1}\\
  &=([x,y]\otimes 1)(\Delta(x^{p-1})-x^{p-1}\otimes 1)+(g^{p+1}\otimes [x,y])(\Delta(x^{p-1})-g^{p-1}\otimes x^{p-1})\\
  &=\Delta([x,y]x^{p-1})-[x,y]x^{p-1}\otimes 1-g^{2 p}\otimes [x,y]x^{p-1}\\
  &=-d^1_{1,g^{2 p}}([x,y]x^{p-1}).
  \end{align*}
Therefore, by Lemma \ref{lemHHHH}, we have that
\begin{align*}
\omega_{g}(\AdR y\otimes 1+g^{p}\otimes y)^{p-1}
&=[-d^1_{1,g^{2  p}}([x,y]x^{p-1})](\AdR (y\otimes 1+g^{  p}\otimes y))^{p-2}=\cdots\cdots\\
&=-d^1_{1,g^{  p^2}}[([x,y]x^{p-1})(\AdR y)^{p-2}].
\end{align*}
\end{proof}

\begin{lem}\label{lem:Delta(Y)}
Let $Y:=y^p-([x,y]x^{p-1})(\AdR y)^{p-2}$. Then
\begin{align*}
\Delta(Y)=Y\otimes 1+g^{p^2}\otimes Y+\omega_g(x^p).
\end{align*}
\end{lem}
\begin{proof}
By Proposition \ref{proJ},
\begin{align*}
\Delta(y^p)&=(y\otimes 1+g^p\otimes y+\omega_g(x))^p\\
&=(y\otimes 1+g^p\otimes y)^p+\omega_g(x)^p+\omega_g(x)(\AdR(y\otimes 1+g^p\otimes y))^{p-1}\\
&=y^p\otimes 1+g^{p^2}\otimes y^p+\omega_g(x^p)+\omega_g(x)(\AdR(y\otimes 1+g^p\otimes y))^{p-1}\\
&=y^p\otimes 1+g^{p^2}\otimes y^p+\omega_g(x^p)-d^1_{1,g^{ p^2}}[([x,y]x^{p-1})(\AdR y)^{p-2}].
\end{align*}
By definition of $d^1_{1,g^{p^2}}$, the assertion holds.
\end{proof}

\begin{cor}\label{cor:liftingsyp-2}
Let $Y:=y^p-([x,y]x^{p-1})(\AdR y)^{p-2}$. If  $g^{p-1}=1$ and $\chi^{p-1}=\epsilon$, then $x^p=\lambda x$ for $\lambda\in\I_{0,1}$ and $Y-\lambda^p y\in\Pp_{1,g^{p^2}}(\bH)=\Pp_{1,g}(\bH)$; otherwise $Y\in\Pp_{1,g^{p^2}}(\bH)$.
\end{cor}
\begin{proof}
If  $g^{p-1}=1$ and $\chi^{p-1}=\epsilon$, then by Lemma \ref{lem:liftingofxp}, $x^p=\lambda x$ for $\lambda\in\I_{0,1}$.  Since $\Delta(y)=y\otimes 1+g^p\otimes y+\omega_g(x)$ and $\omega_g(x^p)=\omega_g(\lambda x)$, it follows that $Y-\lambda^py\in\Pp_{1,g^{p^2}}(\bH)=\Pp_{1,g}(\bH)$. Otherwise  $x^p=0$ in $\bH$ and so $\omega_g(x^p)=0$, which implies that $Y\in\Pp_{1,g^{p^2}}(\bH)$.
\end{proof}

\begin{thm}\label{thm:liftingsofcH}
Let $\cR_G$ be the set of defining relations of $G:=\G(\bH)$ and $hx=\chi(h)xh, hy=\chi^p(h)yh$ for $h\in\G(\bH)$. Then $\bH$ is isomorphic to one of the following Hopf algebras:
\begin{itemize}
  \item[(1) ]$\fH_G^1(g,\chi):=\K\langle h_1,\cdots,h_t, x, y\rangle/(\cR_G,   [x, y],  x^p, y^p)$;
  \item[(2) ]$\fH_G^2(g,\chi):=\K\langle h_1,\cdots,h_t, x, y\rangle/(\cR_G,    [x, y],  x^p, y^p-x)$, if $\chi^{p-1}=\epsilon$ and $g^{p-1}=1$;
  \item[(3) ]$\fH_G^3(g,\chi):=\K\langle h_1,\cdots,h_t, x, y\rangle/(\cR_G,   [x, y],  x^p-x, y^p-y)$,  if $\chi^{p-1}=\epsilon$ and $g^{p-1}=1$;
 \item[(4) ]$\fH_G^4(g,\chi):=\K\langle h_1,\cdots,h_t, x, y\rangle/(\cR_G,   [x,y]-1+g^{p+1}, x^p, y^p+(1-g^{p+1})^{p-1}x-  x)$, if $g^{p-1}=1$, $\chi^{p-1}=\epsilon$, $g^{p+1}\neq 1$ and $\chi^{p+1}=\epsilon$;
 \end{itemize}
where $h_i\in\G(\bH)$ for $i\in\I_{1,t}$, $x\in\Pp_{1,g}(\bH)$ for some $g\in\mathcal{Z}(\G(\bH))$ and $\chi\in\widehat{\G(\bH)}$ such that $\chi(g)=1$ and
\begin{align*}
\Delta(y)=y\otimes 1+g^p\otimes y+\sum_{i=1}^{p-1}\frac{(p-1)!}{i!(p-i)!}x^ig^{p-i}\otimes x^{p-i}.
\end{align*}
\end{thm}
\begin{proof}
By Lemmas \ref{lem:lfitingsof-hx-xh}, \ref{lem:lfitingsof-hy-yh} and \ref{lem:liftingofxp}, the following relations hold in $\bH$:
\begin{align*}
hx-\chi(h) xh=0,\quad hy-\chi^p(h) yg=0,\quad x^p=\lambda_1 x,
\end{align*}
for $\lambda_1\in\I_{0,1}$ with conditions: $\lambda_1=0$, if $g^{p-1}\neq 1$ or $\chi^{p-1}\neq \epsilon$.

\textbf{Case 1. } Assume that $g=1$. Then by Lemma \ref{lem:liftingsof[x,y]}, $[x,y]=\lambda_2x$ for $\lambda_2\in\K$ with conditions: $\lambda_2=0$, if $g\neq 1$ or $\chi\neq\epsilon$.

Observe that $([x,y]x^{p-1})(\AdR y)^{p-2}=(\lambda_2x^{p})(\AdR y)^{p-2}=\lambda_1\lambda_2^{p-1}x$. Then by Corollary  \ref{cor:liftingsyp-2},
$y^p-\lambda_1\lambda_2^{p-1}x-\lambda_1^py\in\Pp(\bH)\cap \bH_{p-1}$ and hence $y^p-\lambda_1^py=\lambda_3 x$ for $\lambda_3\in\K$. Consider the adjoint action of $\K[\G(\bH)]$, we have $\lambda_3=0$ if $\chi^{p-1}\neq\epsilon$.

By Lemma \ref{lem:comm.x-and-y},  $x^py=yx^p$,  $xy^p=y^px+\lambda_2^px$.
Then
  the verification of $[x^p,y]=(\ad_L x)^p(y)$ and $[x,y^p]=(x)(\ad_R y)^p$ gives the conditions  $$\lambda_1\lambda_2=0=(\lambda_1-\lambda_2^{p-1})\lambda_2\Rightarrow \lambda_2=0. $$

If $\lambda_1=0$, then we can choose $\lambda_3\in \I_{0,1}$ by rescaling $x,y$ and obtain two classes  described in $(1)$--$(2)$.

If $\lambda_1=1$, then $g^{p-1}=1$,  $\chi^{p-1}=1$ and so we can take $\lambda_3=0$ via the linear translation $y\mapsto y-bx$ satisfying $b^p-b=\lambda_3$, which gives one class of $H$ described in $(3)$.

\textbf{Case 2.} Assume that $g\neq 1$. Then by Lemma \ref{lem:liftingsof[x,y]}, $[x,y]=\lambda_2(1-g^{p+1})$ for $\lambda_2\in\K$ with conditions: $\lambda_2=0$, if $g^{p+1}=1$ or $\chi^{p+1}\neq\epsilon$. Observe that $([x,y]x^{p-1})(\ad y)^{p-2}=(p-1)![x,y]^{p-1}x=(p-1)!\lambda_2^{p-1}(1-g^{p+1})^{p-1}x$.

 \textbf{Case 2a. } If $g^{p-1}=1$ and $\chi^{p-1}=\varepsilon$, then by Corollary \ref{cor:liftingsyp-2}, $y^p-([x,y]x^{p-1})(\text{ad}\;y)^{p-2}-\lambda_1^p y\in\Pp_{1,g}(H)$. Therefore,
      $$y^p-([x,y]x^{p-1})(\text{ad}\;y)^{p-2}-\lambda_1^p y=\nu_1x+\nu_2(1-g).$$

 If $\chi\neq\epsilon$, then consider the adjoint action of $\K[\G(\bH)]$, we have $\nu_2=0$; otherwise we take $\nu_2=0$ via the linear translation $y\mapsto y-a(1-g)$ satisfying $a^p-\lambda_1^pa=\nu_2$.

       Observe that $(p-1)!=-1$. The verification of $[x^p,y]=(\ad_L x)^p(y)$, $[x,y^p]=(x)(\ad_R y)^p$ and $[y,y^p]=0$ amounts to the conditions
$$\lambda_1\lambda_2(1-g^{p+1})=0=(\lambda_2^p-\lambda_2\nu_1)(1-g^{p+1}).$$

      If $\lambda_1=1$, then $\lambda_2=0$. We can take $\nu_1=0$ via the linear translation $y\mapsto y-ax$ satsifying $a^p=\nu_1$,   which gives one class  described in $(3)$.

      If $\lambda_1=0=\lambda_2$, then we take $\nu_1\in\I_{0,1}$ by rescaling $x,y$, which gives two classes   described in $(1)-(2)$.

      If $\lambda_1=0$ and $\lambda_2\neq 0$, then $g^{p+1}\neq 1$ and $\chi^{p+1}=\epsilon$. Furthermore, we take $\lambda_2=1$ via the linear translation $x\mapsto a^{-1}x,y\mapsto a^{-p}y$ satisfying $a^{p+1}=\lambda_2$, and hence $\nu_1=1$, which gives one class  described in $(4)$.

  \textbf{Case 2b. } If $g^{p-1}\neq 1$ or $\chi^{p-1}\neq 1$, then $x^p=0$ and hence $y^p-([x,y]x^{p-1})(\ad y)^{p-2}=\nu(1-g^{ p^2})$.

The verification of $[y,y^p]=0$ amounts to the condition $\lambda_2=0$. Then we can take $\nu=0$ via the linear translation $y\mapsto y-a(1-g^{p})$ satisfying $a^p=\nu$, which gives    one class of $\bH$ described in $(1)$.
\end{proof}

\begin{rmk}\label{rmk:fHk-1-3-Radfordbiproduct}
It is clear that $\{y^ix^jh,\ h\in G,i,j\in\I_{0,p-1}\}$ is a basis of $\fH_G^k(g,\chi)$ for $k\in\I_{1,3}$. One can check easily that $\pi:\fH_G^k(g,\chi)\rightarrow \K[G],\ y^ix^jh\rightarrow h$ is a bialgebra map admitting a bialgebra
section $\iota:\K[G]\rightarrow \fH_G^k(g,\chi)$ such that $\pi\circ\iota=\id$. Then $\fH^k(g,\chi)\cong R\sharp\K[G]$ for $k\in\I_{1,3}$, where $R$ is one of connected Hopf algebras of dimension $p^2$ classified in \cite[Lemma 7.3]{W1}.
\end{rmk}

\begin{rmk}
\begin{itemize}
\item[(i)] As Hopf algebras, $\fH_G^1(1,\epsilon)\cong\K[\G(\bH)]\otimes\K[x,y]/(x^p,y^p)$, $\fH_G^2(1,\epsilon)\cong\K[\G(\bH)]\otimes\K[x,y]/(x^p,y^p-x)$, $\fH_G^3(1,\epsilon)\cong\K[\G(\bH)]\otimes\K[x,y]/(x^p-x,y^p-y)$.

\item[(ii)] The Hopf subalgebra generated by $x, y$  appeared in \cite{W1} as examples of connected Hopf algebras of dimension $p^2$. Among them,  $\K[x,y]/(x^p,y^p)$ and $\K[x,y]/(x^p-x,y^p-y)$ are dual Hopf algebras of $\K[T]/(T^{p^2})$ and $\K[X]/(X^{p^2}-1)$, respectively, where $\Delta(T)=T\otimes 1+1\otimes T$ and $\Delta(X)=X\otimes X$ (see \cite[Corollary 7.5]{W1}). In particular, up to isomorphism, they are regarded as Hopf subalgebras of the algebra of distributions on $G_a$ and $G_m$, respectively (see \cite[7.8 ]{Ja}).
\end{itemize}
\end{rmk}

\begin{pro}\label{pro:non-isomorphic}
The Hopf algebras $\fH_G^1(g,\chi)$, $\fH_G^2(g,\chi)$, $\fH_G^3(g,\chi)$ and $\fH_G^4(g,\chi)$ are pairwise non-isomorphic.
\end{pro}
\begin{proof}
We first show that $\fH_G^1(g,\chi)\not\cong \fH_G^2(g',\chi')$. Suppose that there is a Hopf algebra isomorphism  $\phi:\fH_G^1(g,\chi)\rightarrow \fH_G^2(g',\chi')$, then $\phi|_{G}\in\Aut(G)$ and  by Proposition \ref{pro:R11-4.3.3} $\phi(\Pp_{1,g}(\fH_G^1(g,\chi)))=\Pp_{1,g'}(\fH_G^2(g',\chi'))$. Therefore, $\phi(g)=g'$ and $\phi(x)=\alpha x'+\beta(1-g')$ for some $\alpha\in\K^{\times},\beta\in\K$.

Applying $\phi$ to the relations $hx-\chi(h)x=0$ and $x^p=0$ in $\fH_G^1(g,\chi)$, we have $\beta=0$.

 Consider $\gr \phi:\gr\fH_G^1(g,\chi)\rightarrow\gr\fH_G^2(g',\chi')$ which is induced by $\phi$. Then there exist $\beta_i\in\K$ such that
     \begin{align*}
     (\gr\phi)(y)=\sum \beta_i g_i^{\prime}y^{\prime},\quad g_i^{\prime}\in G.
     \end{align*}
     Then from $\Delta\gr\phi(y)=(\gr\phi\otimes \gr\phi)\Delta(y)$,
     we deduce $\beta_i=0$ if $g_i^{\prime}\neq 1$.
     Hence $\gr\phi(y)=\beta_0y^{\prime}$. Then $\phi(y)=\beta_0y^{\prime}+\omega$, where $\omega$ belongs to the subalgebra generated by the first term of the coradical filtration. From $\Delta\phi(y)=(\phi\otimes \phi)\Delta(y)$, we have
     \begin{align*}
     \beta_0=\alpha^p,\quad \Delta(\omega)=\omega\otimes 1+(g')^p\otimes \omega.
     \end{align*}
      Thus  $\omega=\gamma_1 x^{\prime}+\gamma_2(1-g')$ and $\phi(y)=\alpha^py'+\gamma_1 x'+\gamma_2(1-g')$. From $y^p=0$ in $\fH_G^1(g,\chi)$,  we deduce $\alpha=0$, a contradiction. Hence $\fH_G^1(g,\chi)\not\cong \fH_G^2(g',\chi')$.

Similarly, one can check that $\fH_G^1(g,\chi)\not\cong \fH_G^3(g,\chi)$, $\fH_G^1(g,\chi)\not\cong \fH_G^4(g,\chi)$, $\fH_G^2(g,\chi)\not\cong \fH_G^3(g,\chi)$, $\fH_G^2(g,\chi)\not\cong \fH_G^4(g,\chi)$ and $\fH_G^3(g,\chi)\not\cong \fH_G^4(g,\chi)$.
\end{proof}

\begin{pro}\label{pro:isomorohism}
$\fH_G^i(g,\chi)\cong\fH_G^i(g',\chi')$ for $i\in\I_{1,4}$ if and only if there exists $f\in\Aut(G)$ such that $f(g)=g'$ and $\chi\cdot f^{-1}=\chi'$.
\end{pro}
\begin{proof}
Let $\psi: \fH_G^1(g,\chi)\rightarrow \fH_G^1(g',\chi')$ be a Hopf algebra isomorphism. Then $\psi|_G\in\Aut(G)$ and $\psi(\Pp_{1,g}(\fH_G^1(g,\chi))=\Pp_{1,g'}(\fH_G^1(g',\chi'))$. Therefore,
\begin{align*}
\psi(g)=g',\quad \psi(x)=\alpha x'+\beta(1-g'),\quad \alpha\neq 0,\beta\in\K.
\end{align*}
Since $\psi(hxh^{-1}-\chi(h)x)=0$, it follows that $\chi'\cdot\psi=\chi$ and $\beta(\chi-\epsilon)=0$. Applying $\psi$ to the relation $x^p=0$, we have $\beta=0$.

Consider $\gr \psi:\gr\fH_G^1(g,\chi)\rightarrow\gr\fH_G^1(g',\chi')$ which is induced by $\psi$. Then there exist $\beta_i\in\K$ such that
     \begin{align*}
     (\gr\phi)(y)=\sum \beta_i g_i^{\prime}y^{\prime},\quad g_i^{\prime}\in G.
     \end{align*}
     Then from $\Delta\gr\psi(y)=(\gr\psi\otimes \gr\psi)\Delta(y)$,
     we deduce $\beta_i=0$ if $g_i^{\prime}\neq 1$.
     Hence $\gr\psi(y)=\beta_0y'$. Then $\phi(y)=\beta_0y'+\omega$, where $\omega$ belongs to the subalgebra generated by the first term of the coradical filtration. From $\Delta\psi(y)=(\phi\otimes \psi)\Delta(y)$, we have
     \begin{align*}
     \beta_0=\alpha^p,\quad \Delta(\omega)=\omega\otimes 1+(g')^p\otimes \omega.
     \end{align*}
      Thus  $\omega=\gamma_1 x'+\gamma_2(1-g')$ and $\phi(y)=\alpha^py'+\gamma_1 x'+\gamma_2(1-g')$. From $hy-\chi^p(h)yh=0$ and $y^p=0$ in $\fH_G^1(g,\chi)$, we have $\gamma_2(1-g')=0$ and $(\chi^{p-1}-\epsilon)\gamma_1=0$.

      Conversely, if there exists $f\in\Aut(G)$ such that $f(g)=g'$ and $\chi\cdot f^{-1}=\chi'$. Then $f$ can extend to an isomorphism of Hopf algebras from  $\fH_G^1(g,\chi)$ to $\fH_G^1(g',\chi')$ by $f(x)=x'$ and $f(y)=y'$.

      The proof of the remaining cases follows the same line of the last case.
\end{proof}

\subsection{Classification results: diagrams are not Nichols algebras}We classify pointed Hopf algebras of dimension $p^2m$ whose diagrams are not Nichols algebras and the coradicals have dimension $m$.
\begin{lem}
Let $\fA$ be the Hopf subalgebra of $\bH$ generated by group-like elements and skew-primitive elements. Then there exists a YD-triple $(\G(\bH),g,\chi)$ and $k\in\I_{1,2}$ such that $\fA\cong\fA_{\G(\bH)}^k(g,\chi)$ (see Definition \ref{defi:fA}).
\end{lem}
\begin{proof}
By assumption and Theorem \ref{thm:liftingsofcH},  $\fA$ is a  pointed Hopf algebra of dimension $pm$ whose diagram has dimension $p$ with $p\nmid m$. Then by Theorem \ref{thm:classification-rank-one},  there exists a tuple $(\G(H),g,\chi,f)$ and $k\in\I_{1,2}$  such that $\fA\cong\fA_{\G(H)}^1(g,\chi,f)$. Since $p\nmid m=|\G(\fA)|$, by Remark \ref{rmk:important}(3), we have $f=0$. This completes the proof.
\end{proof}

\begin{pro}\label{pro:Hoch-fA}
$\dim H^2({}^1\K^{g^p},\fA)=1$ with a basis $\{\omega_g(x)\}$.
\end{pro}
\begin{proof}
 We first claim that $\omega_g(x)\neq 0$ in $H^2({}^1\K^{g^p},\fA)$. Indeed, it  is easy to see that $\omega_g(x)\in Z^2({}^1\K^{g^2},\fA)$. Suppose that $\omega_g(x)\in B^2({}^1\K^{g^p},\fA)$. Then there is an element $a\in \fA$ such that $-d^1_{1,g^p}(a)=\omega_g(x)$, that is, $\Delta(a)=a\otimes 1+g^p\otimes a+\omega_g(x)$.
 On  one hand,  $a=\sum_{i=0}^{p-1}g_ix^i$ for some $g_i\in\K[G]$ and so $\Delta(a)\in \sum_{k=0}^{p-1}\fA_k\otimes \fA_{p-1-k}$. On the other hand,  $\omega_g(x)\in \sum_{k=1}^{p-1}\fA_k\otimes \fA_{p-k}$. Therefore, $\omega_g(x)\not\in B^2({}^1\K^{g^p},\fA)$ and so $\dim H^2({}^{1}\K^{g^p},\fA)\geq 1$.

 By Remark \ref{rmk:fA-dual}(iii), $\fA\As\cong R\As\sharp(\K[G])\As$ and as algebras, $R\As\cong\K[\Z_p]$. By Proposition \ref{proDualCo},
  \begin{align*}
 1\leq\dim H^2({}^{1}\K^{g^p},\fA)=\dim H^2(\fA\As,\K)=\dim H^2(R\As\sharp(\K[G])\As,\K).
 \end{align*}

Since $(\K[G])\As$ is semisimple, it follows by \cite[Theorem 3.3 and Eq. (3.6.1)]{St}, $H^2(R\As\sharp(\K[G])\As,\K)\cong H^2(R\As,\K)^{(\K[G])\As}$, where $H^2(R\As,\K)^{(\K[G])\As}$ is the space of $(\K[G])\As)$-invariant, see \cite{St} for details. Then
  \begin{align*}
 \dim H^2(R\As\sharp(\K[G])\As,\K)=\dim H^2(R\As,\K)=\dim H^2(\Z_p,\K)=1.
 \end{align*}
 Therefore, $\dim H^2({}^{1}\K^{g^p},\fA)=1$ with a basis $\omega_g(x)$.
\end{proof}

Let $H$ be the Hopf algebra such that $\gr H\cong\cH$ in Theorem \ref{thm:p2m-nonNichols-1}. Proposition \ref{pro:Hoch-fA} shows that $\Delta(y)$ does not admit non-trivial deformations in the lifting procedure. Namely, there does not exist an element $\omega\neq\omega_g(x)$ in $H^2({}^1\K^{g^p},\fA)$ such that $\Delta_H(y)=y\otimes 1+g^p\otimes y+\omega$.

\begin{thm}\label{thm:H-group-algebra-of-dim-m}
Let $\Char\K=p$. Let $H$ be a pointed Hopf algebra with abelian coradical of dimension $p^2m$ such that the diagram has dimension $p^2$. Suppose that $H$ is not generated by group-like elements and skew-primitive elements.  Then
$H$ is isomorphic to $\fH_{\G(H)}^k(g,\chi)$ for some $k\in\I_{1,4}$ in Theorem \ref{thm:liftingsofcH}. Furthermore, $\fH_{\G(H)}^i(g,\chi)\cong\fH_{\G(H)}^i(g',\chi')$ for $i\in\I_{1,4}$ if and only if there exists $f\in\Aut(\G(H))$ such that $f(g)=g'$ and $\chi\cdot f^{-1}=\chi'$.
\end{thm}
\begin{proof}
By assumption,  it is clear that $\gr H\cong\gr\bH\cong\cH$ in Theorem \ref{thm:p2m-nonNichols-1}. By Proposition \ref{pro:Hoch-fA}, there  exists some $y\in H-\fA$ such that  $\Delta(y)=y\otimes 1+g^p\otimes y+\omega_g(x)$, that is, $H\cong\bH$ as Hopf algebras. Therefore, the assertion follows by Theorem \ref{thm:liftingsofcH}, Propositions \ref{pro:non-isomorphic} and \ref{pro:isomorohism}.
\end{proof}

\section{Main results}\label{sec:p2q}
Let $\Char\K=p>0$. In this section, we introduce our main classification results. We give the complete classification of pointed Hopf algebras of dimension $p^2q$. Besides, we classify pointed Hopf algebras of dimension $p^2m$ with abelian coradicals, where $m$ is square-free and $p\nmid m$.

\subsection{Isomorphism classes of pointed Hopf algebras of dimension $p^2q$}\label{subsec:p2q}
We give a complete list of isomorphism classes of pointed Hopf algebras over $\K$ of dimension $p^2q$.
\begin{lem}\label{lem:p2q-H0-1}
Let $H$ be a pointed Hopf algebra with $\Char\K=p$ of dimension $p^2q$ and $R$ the diagram of $H$. Then $\dim H_0=pq$ or $q$.
\end{lem}
\begin{proof}
Let $R$ be the diagram of $H$ and $V:=R(1)\cong\Pp(R)$. By Nichols-Zoeller Theorem \cite{NZ},  $\dim H_0\mid\dim H$ and $\dim H=\dim R\dim H_0$. Thus we have the following possibilities:
\begin{enumerate}
  \item $\dim H_0=p^2$. Then $|\G(H)|=p^2$ and $\dim R=q$, which implies that $\dim V=1$. Let $V:=\K\{x\}$. Since $\widehat{\G(H)}=\{\epsilon\}$, $x\in V^{\epsilon}$ and hence  $c(x\otimes x)=x\otimes x$. Therefore, $R$ contains a braided Hopf algebra $\K[x]/(x^p)$ of dimension $p$, a contradiction.

  \item $\dim H_0=p$. Then $\G(H)\cong \Z_p$ and $\dim R=pq$.  It is impossible. Indeed,  if $\dim V=1$, then $\dim\BN(V)=p$ and hence $R$ is not a Nichols algebra; by Lemma \ref{lem:p2dividedimR}, $p^2\mid \dim R$, a contradiction. If $\dim V\geq 2$,  then from the proof of  \cite[Lemma 3.1]{X23}, we have $p^2\mid \dim R$, a contradiction. Indeed, there must be a  two-dimensional subobject $W\subset V$, which is of diagonal type with trivial braiding or of Jordan type, which implies that $\dim\BN(W)\mid\dim\BN(V)$.
    \item $\dim H_0=pq$. Then $|\G(H)|=pq$ and $\dim R=p$. Furthermore, $R\cong\K[x]/(x^p)$.
    \item $\dim H_0=q$. Then  $|\G(H)|=q$  and $\dim R=p^2$.
\end{enumerate}
\end{proof}

\begin{pro}\label{pro:p2q-1}
Let $H$ be a pointed Hopf algebra over $\K$ of dimension $p^2q$ whose diagram $R$ has dimension $p$. Then $H$ is isomorphic to one of the following Hopf algebras:
\begin{description}
  \item[(1)]$\K[\Z_{pq}]\otimes\K[x]/(x^p)$, with   $x\in\Pp(H)$;
  \item[(2)]$\K[\Z_{pq}]\otimes\K[x]/(x^p-x)$, with  $x\in\Pp(H)$;
  \item[(3)]$\K\langle g,x\rangle/(g^{pq}-1, gx-\xi xg,x^p)$, $\xi$ a primitive $q$th root of unity, with $g\in\G(H)$, $x\in\Pp(H)$;
  \item[(4)]$\K\langle g,x\rangle/(g^{pq}-1, gx-\xi xg,x^p-x)$, $q\mid p-1$, with $g\in\G(H)$, $x\in\Pp(H)$;

 \item[(5)]$\K\langle g,x\rangle/(g^{pq}-1, gx-\xi xg,x^p)$, with $g\in\G(H)$, $x\in\Pp_{1,g^q}(H)$;

  \item[(6)]$\K[g,x]/(g^{pq}-1, x^p)$, with $g\in\G(H)$, $x\in\Pp_{1,g}(H)$;
  \item[(7)]$\K\langle g,x\rangle/(g^{pq}-1, [g,x]-g+g^2,x^p-x)$, with $g\in\G(H)$, $x\in\Pp_{1,g}(H)$;

  \item[(8)]$\K[g,x]/(g^{pq}-1, x^p)$, with $g\in\G(H)$, $x\in\Pp_{1,g^q}(H)$;
\item[(9)]$\K\langle g,x\rangle/(g^{pq}-1, [g,x]-g+g^{q+1},x^p-q^{p-1}x)$, with $g\in\G(H)$, $x\in\Pp_{1,g^q}(H)$;

\item [(10)]$\K[g,x]/(g^{pq}-1, x^p)$, with $g\in\G(H)$, $x\in\Pp_{1,g^p}(H)$;
\item [(11)]$\K[g,x]/(g^{pq}-1, x^p-x)$, $q\mid p-1$, with $g\in\G(H)$, $x\in\Pp_{1,g^p}(H)$;
\item[(12)]$\K\langle g,x\rangle/(g^{pq}-1, [g,x]-g+g^{p+1},x^p-x)$, $q\mid p-1$, with $g\in\G(H)$, $x\in\Pp_{1,g^p}(H)$;

\item[(13)]$\K[\Z_p\rtimes \Z_q]\otimes \K[x]/(x^p) $, with $x\in\Pp(H)$;
\item[(14)] $\K[\Z_p\rtimes \Z_q]\otimes \K[x]/(x^p-x) $, with $x\in\Pp(H)$;
\item[(15)]$\K\langle g,h,x\rangle/(g^{q}-1, h^p-1,ghg^{-1}-h^t,gx-\xi xg,hx-xh,x^p)$, with $g,h\in \G(H)$, $x\in\Pp(H)$;
\item[(16)]$\K\langle g,h,x\rangle/(g^{q}-1, h^p-1,ghg^{-1}-h^t,gx-\xi xg,hx-xh,x^p-x) $, with $g,h\in \G(H)$, $x\in\Pp(H)$;
\item[(17)]$\K[\Z_q\rtimes \Z_p]\otimes \K[x]/(x^p)$, with $x\in\Pp(H)$;
\item[(18)]$\K[\Z_q\rtimes \Z_p]\otimes \K[x]/(x^p-x)$, with $x\in\Pp(H)$.
\end{description}
\end{pro}
\begin{proof}
Let $V:=R(1)$. By assumption, $R\cong\K[x]/(x^p)$ with $x\in V_{h}^{\tau}$, where $g\in\mathcal{Z}(\G(H))$ and $\tau\in\widehat{\G(H)}$ such that $\tau(h)=1$.
Furthermore, $\G(H)$ is isomorphic either to $\Z_{pq}$, $\Z_p\rtimes \Z_q$, $\Z_q\rtimes  \Z_p$. Therefore, by Theorem \ref{thm:classification-rank-one},  there exists a tuple $(\G(H),h,\tau,f)$  such that $H\cong\fA_{\G(H)}^k(h,\tau,f)$ for $k\in\I_{1,2}$ or $\fA_{\G(H)}^3(h,f)$. Now we determine isomorphism classes.

Assume that $\G(H)\cong \Z_{pq}:=\langle g\rangle$.  Observe that $\mathcal{Z}(\G(H))=\Z_{pq}$ and $\widehat{\G(H)}:=\langle \chi\rangle$, where $\chi(g)=\xi$ is a primitive $q$th root of unity.  Hence $x\in V_{g^{i}}^{\chi^j}$ for some   $i\in \I_{0,pq-1},j\in \I_{0,q-1}$ such that  $\chi^j(g^i)=\xi^{ij}=1$, which implies that $q\mid ij$, that is, $q\mid i$ or $j=0$.  If $j=0$, then by changing the generator of $\Z_{pq}$, we can take $i\in\{0,1,p,q\}$.   If $i=0$, then we can take $j\in\I_{0,1}$. If $i\neq 0,j\neq 0$, then $q\mid i$;   we can take $i=q$.  Therefore, up to isomorphism, we may restrict to the following realization of $R$:
\begin{gather*}
(i,j)=\{(0,0),\; (0,1),\; (1,0),\; (p,0),\; (q,0),\;(q,j), j\in\I_{1,q-1}\}.
\end{gather*}

If $(i,j)\in\{(0,0),\; (0,1)\}$, then $h=1$ and hence $H\cong\fA_{\Z_{pq}}^k(1,\chi^j)$ for $k\in\I_{1,2}$, which are described in $(1)$--$(4)$.

If $(i,j)\in\{(q,j),\  j\in\I_{1,q-1}\}$, then $\ord(h)=p$, $\tau\neq\epsilon$. Since $\chi^{j}(g)$ is a primitive $q$th root of unity, it follows by Remark \ref{rmk:important} (1), $f(h)=f(g^q)=0$. Hence  $H\cong\fA_{\Z_{pq}}^1(g^q,\chi^j,f)$. Observe that the value of $f(k)$ for any $k\in\Z_{pq}$ is determined by $f(g)$. Then by Proposition \ref{pro:isomorohism-rank-1}, we have $\fA_{\Z_{pq}}^1(g^q,\chi^j,f)\cong \fA_{\Z_{pq}}^1(g^q,\chi^j)$ via the linear translation $x\mapsto x+\alpha(1-g^q)$ satisfying $f(g)=\alpha(\chi^j-\epsilon)(g)$, which gives the one class descried in $(5)$.

If $(i,j)=(1,0)$, then $\tau=\epsilon$, $\ord(h)=pq$. From which, we have $h^{p-1}\neq 1$ and  $h^{p}\neq 1$. Therefore,  $H\cong\fA_{\Z_{pq}}^1(g,\epsilon)$ or $\fA_{\Z_{pq}}^3(g,f)$ with $f(g)=1$, which are described in $(6)$--$(7)$.

If $(i,j)=(q,0)$, then $\tau=\epsilon$, $\ord(h)=p$. By Remark \ref{rmk:important} (2), we have $f=0$ when $f(g^q)=0$. Consequently,  $H\cong\fA_{\Z_{pq}}^1(g^q,\epsilon)$ or $\fA_{\Z_{pq}}^3(g^q,f)$ with $f(g)=1$, which are described in $(8)$--$(9)$.

 If $(i,j)=(p,0)$, then $\ord(h)=q$, and $\tau=\epsilon$. From which, we have $p\nmid\ord(h)$ and  $h^{p}\neq 1$. Therefore, $H\cong\fA_{\Z_{pq}}^k(g^p,\epsilon)$ for $k\in\I_{1,2}$ or $\fA_{\Z_{pq}}^2(g^p,\epsilon, f)$ with $f(g)=1$, which are described in $(10)$--$(12)$. From the commutativity, the Hopf algebras described in $(11)$ and $(12)$ are non-isomorphic.

Assume that $\G(H)=\Z_p\rtimes \Z_q$.  Since the center of $\G(H)$ is trivial,  $x\in V_{1}^{\chi^i}$ for some $i\in\I_{0,q-1}$.   Up to change the character $\chi$, we take $i\in\I_{0,1}$. Therefore, $H\cong\fA_{\Z_p\rtimes \Z_q}^k(1,\chi^i)$ for $k\in\I_{1,2}$, which are described in $(13)$--$(16)$.

Assume that $\G(H)\cong\Z_q\rtimes \Z_p$. Since the center of $\G(H)$ and $\widehat{\G(H)}$ is trivial,  $x\in V_{1}^{\epsilon}$. Therefore, $H\cong\fA_{\Z_q\rtimes \Z_p}^k(1,\epsilon)$ for $k\in\I_{1,2}$, which are described in $(17)$--$(18)$.
\end{proof}

\begin{pro}\label{pro:p2q-rank-2-liftings}
Let $H$ be a pointed Hopf algebra over $\K$ of dimension $p^2q$ such that the diagram $R$ has dimension $p^2$. Then $H$ is isomorphic to one of the following Hopf algebras:
\begin{description}
  \item[(1)] $\K[\Z_q]\otimes\K[x,y]/(x^p,y^p)$, with  $x,y\in\Pp(H)$;
  \item[(2)] $\K[\Z_q]\otimes\K[x,y]/(x^p-x,y^p)$, with   $x,y\in\Pp(H)$;
  \item[(3)] $\K[\Z_q]\otimes\K[x,y]/(x^p-y,y^p)$, with  $x,y\in\Pp(H)$;
  \item[(4)] $\K[\Z_q]\otimes\K[x,y]/(x^p-x,y^p-y)$, with   $x,y\in\Pp(H)$;
  \item[(5)] $\K[\Z_q]\otimes\K\langle x,y\rangle/(x^p-x,y^p, [x,y]-y)$, with $g\in\G(H)$, $x,y\in\Pp(H)$;

  \item[(6)] $\K\langle g,x,y\rangle/(g^q-1,gx-xg, gy-\xi yg,x^p ,y^p, [x,y]),$ with $g\in\G(H)$, $x,y\in\Pp(H)$;
  \item[(7)] $\K\langle g,x,y\rangle/(g^q-1,gx-xg, gy-\xi yg,x^p-x,y^p, [x,y]),$ with $g\in\G(H)$, $x,y\in\Pp(H)$;
  \item[(8)] $\K\langle g,x,y\rangle/(g^q-1,gx-xg, gy-\xi yg,x^p ,y^p-y, [x,y]),$ $q\mid p-1$, with $g\in\G(H)$, $x,y\in\Pp(H)$;
  \item[(9)] $\K\langle g,x,y\rangle/(g^q-1,gx-xg, gy-\xi yg,x^p-x,y^p-y, [x,y]),$    $q\mid p-1$, with $g\in\G(H)$, $x,y\in\Pp(H)$;
  \item[(10)] $\K\langle g,x,y\rangle/(g^q-1,gx-xg, gy-\xi yg,x^p-x,y^p, [x,y]-y),$ with $g\in\G(H)$, $x,y\in\Pp(H)$;

  \item[(11)] $\K\langle g,x,y\rangle/(g^q-1,gx-\xi xg, gy-\xi yg,x^p,y^p, [x,y]),$ with $g\in\G(H)$, $x,y\in\Pp(H)$;
  \item[(12)] $\K\langle g,x,y\rangle/(g^q-1,gx-\xi xg, gy-\xi yg,x^p-x,y^p, [x,y]),$ $q\mid p-1$, with $g\in\G(H)$, $x,y\in\Pp(H)$;
  \item[(13)] $\K\langle g,x,y\rangle/(g^q-1,gx-\xi xg, gy-\xi yg,x^p-y,y^p, [x,y]),$ $q\mid p-1$, with $g\in\G(H)$, $x,y\in\Pp(H)$;
  \item[(14)] $\K\langle g,x,y\rangle/(g^q-1,gx-\xi xg, gy-\xi yg,x^p-x,y^p-y, [x,y]),$ $q\mid p-1$, with $g\in\G(H)$, $x,y\in\Pp(H)$;
  \item[(15)] $\K\langle g,x,y\rangle/(g^q-1,gx-\xi xg, gy-\xi^{\nu} yg,x^p,y^p, [x,y]),$ $\nu\in\I_{2,q-1}$, with $g\in\G(H)$, $x,y\in\Pp(H)$;

  \item[(16)] $\K\langle g,x,y\rangle/(g^q-1,gx-\xi xg, gy-\xi^{\nu} yg,x^p-x,y^p, [x,y]),$ $q\mid p-1$, $\nu\in\I_{2,q-1}$, with $g\in\G(H)$, $x,y\in\Pp(H)$;
  \item[(17)] $\K\langle g,x,y\rangle/(g^q-1,gx-\xi xg, gy-\xi^{\nu} yg,x^p,y^p-y, [x,y]),$ $q\mid p-1$, $\nu\in\I_{2,q-2}$, with $g\in\G(H)$, $x,y\in\Pp(H)$;
   \item[(18)] $\K\langle g,x,y\rangle/(g^q-1,gx-\xi xg, gy-\xi^{\nu} yg,x^p-y,y^p, [x,y]),$ $q\mid p-\nu$, $\nu\in\I_{2,q-1}$, with $g\in\G(H)$, $x,y\in\Pp(H)$;
    \item[(19)] $\K\langle g,x,y\rangle/(g^q-1,gx-\xi xg, gy-\xi^{\nu} yg,x^p,y^p-x, [x,y]),$ $q\mid p\nu-1$, $\nu\in\I_{2,q-2}$, with $g\in\G(H)$, $x,y\in\Pp(H)$;
    \item[(20)] $\K\langle g,x,y\rangle/(g^q-1,gx-\xi xg, gy-\xi^{\nu} yg,x^p-x,y^p-y, [x,y]),$ $q\mid p-1$, $\nu\in\I_{2,q-1}$, with $g\in\G(H)$, $x,y\in\Pp(H)$;
   \item[(21)] $\K\langle g,x,y\rangle/(g^q-1,gx-\xi xg, gy-\xi^{-1} yg,x^p-y,y^p-x, [x,y]),$ $q\nmid p-1$, $q\mid p+1$, with $g\in\G(H)$, $x,y\in\Pp(H)$;

  \item[(22)] $\K[g,x,y]/(g^q-1,x^p,y^p)$, with $g\in G(H)$, $x\in\Pp(H)$, $y\in\Pp_{1,g}(H)$;
  \item[(23)] $\K[g,x,y]/(g^q-1,x^p-x,y^p)$, with $g\in G(H)$, $x\in\Pp(H)$, $y\in\Pp_{1,g}(H)$;
  \item[(24)] $\K[g,x,y]/(g^q-1,x^p,y^p-y)$,$q\mid p-1$, with $g\in G(H)$, $x\in\Pp(H)$, $y\in\Pp_{1,g}(H)$;
  \item[(25)] $\K[g,x,y]/(g^q-1,x^p-x,y^p-y)$, $q\mid p-1$, with $g\in G(H)$, $x\in\Pp(H)$, $y\in\Pp_{1,g}(H)$;
  \item[(26)] $\K\langle g,x,y\rangle/(g^q-1,gx-xg, gy-yg,x^p-x,y^p, [x,y]-y),$ with $g\in G(H)$, $x\in\Pp(H)$, $y\in\Pp_{1,g}(H)$;
   \item[(27)] $\K\langle g,x,y\rangle/(g^q-1,gx-xg, gy-yg,x^p,y^p, [x,y]-1+g),$ with $g\in G(H)$, $x\in\Pp(H)$, $y\in\Pp_{1,g}(H)$;
  \item[(28)] $\K\langle g,x,y\rangle/(g^q-1,gx-xg, gy-yg,x^p-x,y^p, [x,y]-y-1+g),$ with $g\in G(H)$, $x\in\Pp(H)$, $y\in\Pp_{1,g}(H)$;

 \item[(29)] $\K[g, x, y]/(g^q-1, x^p, y^p)$, with $g\in G(H)$, $x,y\in\Pp_{1,g}(H)$;
    \item[(30)] $\K[g, x, y]/(g^q-1, x^p-x, y^p)$,  $q\mid p-1$, with $g\in G(H)$, $x,y\in\Pp_{1,g}(H)$;
    \item[(31)] $\K[g, x, y]/(g^q-1,  x^p-y, y^p)$,  $q\mid p-1$, with $g\in G(H)$, $x,y\in\Pp_{1,g}(H)$;
    \item[(32)] $\K[g, x, y]/(g^q-1, x^p-x, y^p-y)$,  $q\mid p-1$, with $g\in G(H)$, $x,y\in\Pp_{1,g}(H)$;
    \item[(33)] $\K\langle g, x, y\rangle/(g^q-1, gx-xg,  gy-yg, x^p, y^p,  [x, y]-1+g^2), q\neq 2$,
    with $g\in G(H)$, $x,y\in\Pp_{1,g}(H)$.

  \item[(34)] $\K[g,x,y]/(g^q-1,x^p,y^p)$, with $g\in G(H)$, $x\in\Pp_{1,g}(H)$, $y\in\Pp_{1,g^{\nu}}(H), \nu\in\I_{2,q-1}$;
  \item[(35)] $\K[g,x,y]/(g^q-1,x^p-x,y^p)$, $q\mid p-1$, with $g\in G(H)$, $x\in\Pp_{1,g}(H)$, $y\in\Pp_{1,g^{\nu}}(H), \nu\in\I_{2,q-1}$;
  \item[(36)] $\K[g,x,y]/(g^q-1,x^p,y^p-y)$, $q\mid p-1$, with $g\in G(H)$, $x\in\Pp_{1,g}(H)$, $y\in\Pp_{1,g^{\nu}}(H), \nu\in\I_{2,q-2}$;
  \item[(37)] $\K[g,x,y]/(g^q-1,x^p-y,y^p)$, $q\mid p-\nu$, with $g\in G(H)$, $x\in\Pp_{1,g}(H)$, $y\in\Pp_{1,g^{\nu}}(H), \nu\in\I_{2,q-1}$;
  \item[(38)] $\K[g,x,y]/(g^q-1,x^p,y^p-x)$, $q\mid p\nu-1$, with $g\in G(H)$, $x\in\Pp_{1,g}(H)$, $y\in\Pp_{1,g^{\nu}}(H), \nu\in\I_{2,q-2}$;

        \item[(39)] $\K[g,x,y]/(g^q-1,x^p-x,y^p-y)$, $q\mid p-1$, with $g\in G(H)$, $x\in\Pp_{1,g}(H)$, $y\in\Pp_{1,g^{\nu}}(H), \nu\in\I_{2,q-1}$;
  \item[(40)] $\K\langle g,x,y\rangle/(g^q-1,gx-xg, gy-yg,x^p,y^p, [x,y]-1+g^{\nu+1})$,    with $g\in G(H)$, $x\in\Pp_{1,g}(H)$, $y\in\Pp_{1,g^{\nu}}(H), \nu\in\I_{2,q-2}$;
  \item[(41)] $\K[ g,x,y]/(g^q-1,x^p-y,y^p-x)$,  $q\nmid p-1$, $q\mid p+1$,   with $g\in G(H)$, $x\in\Pp_{1,g}(H)$, $y\in\Pp_{1,g^{-1}}(H)$.


\end{description}
\end{pro}

\begin{proof}
By assumption, $\G(H)\cong\Z_q$ with generator $g$.  Let  $\widehat{\G(H)}\cong \langle\chi\rangle$, such that $\chi(g)=\xi$ is a primitive $q$th root of unity. Then by Lemma \ref{lem:p2m-H0-2},  $R\cong\BN(V,\D)$ for some QPYD-datum $\D:=\D(\Z_q, \chi^i,\chi^l, g^j,g^k)$ with $i,j,k,l\in\I_{0,q-1}$. Since $\chi^i(g^j)=1=\chi^l(g^k)=\chi^i(g^k)\chi^l(g^j)$, it follows that
  $q\mid ij$, $q\mid kl$ and $q\mid kj+li$. Therefore, $ij=0$, $kl=0$ and $kj+li=0$. Observe that $\Aut(\Z_q)\cong\Z_{q-1}$. Let $\Omega:=\{(i,l,j,k)\mid ij=0,kl=0,kj+li=0,i,j,j,l\in\I_{0,q-1}\}$. Then consider the action of $\Aut(\Z_q)\times S_2$ on $\Omega$ defined by Proposition \ref{pro:isomorphism-rank-2},
   we restrict to consider the cases:
\begin{enumerate}
  \item[(1)] $V=V_{1}^{\epsilon}$.
  \item[(2)]$V=V_{1}^{\epsilon}\oplus V_{1}^{\chi}$.
  \item[(3)]$V=V_{1}^{\chi}\oplus V_{1}^{\chi^\nu},\ \nu\in\I_{1,q-1}$.

  \item[(4)] $V=V_{1}^{\epsilon}\oplus V_{g}^{\epsilon}.$
  \item[(5)]$V= V_{g}^{\epsilon}\oplus V_{g^{\nu}}^{\epsilon},\ \nu\in\I_{1,q-1}.$
\end{enumerate}

\textbf{Case (1) }   By Proposition \ref{pro:liftings-dH-dimP(dH)=2}, $H\cong\dH^k(\Z_q,\epsilon,\epsilon,1,1)$ for $k\in\I_{1,5}$, which are the non-isomorphic classes  described in $(1)$--$(5)$.

  \textbf{Case (2) }  By Proposition \ref{pro:liftings-dH-dimP(dH)=2}, $H\cong\dH^k(\Z_q,\chi,\epsilon,1,1)$ for $k\in\{1,2,4\}$ or  $H\cong\dH^k(\Z_q,\epsilon,\chi,1,1)$ for $k\in\{1,2,4,5\}$. It is clear that $\dH^k(\Z_q,\epsilon,\chi,1,1)\cong \dH^k(\Z_q,\chi,\epsilon,1,1)$ for $k\in\{1,4\}$ by exchanging $x$ with $y$. Hence we obtain five classes, which are described in $(6)$-$(10)$.

  We claim that the Hopf algebras described in $(6)$-$(10)$ are pairwise non-isomorphic. Indeed, by Proposition \ref{pro:non-isomorphic-rank-2}, it remains to show that the Hopf algebras described in $(7)$-$(8)$ are non-isomorphic.   Denote by $\cH_{7}$ and $\cH_{8}$  the Hopf algebras described in $(7)$--$(8)$ respectively. If there is an isomorphism of Hopf algebras $\phi: \cH_{7}\rightarrow\cH_{8}$, then by Proposition \ref{pro:R11-4.3.3} $\phi(\Pp(\cH_{7}))=\Pp(\cH_{8})$ and hence we have
 \begin{align*}
 \phi(x)=\alpha_1x^{\prime}+\alpha_2y^{\prime}, \quad \phi(y)=\beta_1x^{\prime}+\beta_2y^{\prime},\quad \alpha_1\beta_2-\alpha_2\beta_1\neq 0.
 \end{align*}
 From $gx=xg$, $gy=\xi yg$ in $\cH_{7}$, we deduce that $\alpha_2=0=\beta_1$. From $x^p=x$ in $\cH_{7}$, we have that $\alpha_1=0$, a contradiction. Hence $\cH_{7}\not\cong\cH_{8}$.

\textbf{Case (3).} Suppose that  $\nu=1$. Then by Proposition \ref{pro:liftings-dH-dimP(dH)=2}, $H\cong\dH^k(\Z_q,\chi,\chi,1,1)$ for $k\in\I_{1,4}$, which are described in $(11)$--$(14)$.

   Suppose that $\nu\neq 1$. Then by Proposition \ref{pro:liftings-dH-dimP(dH)=2}, $H\cong\dH^k(\Z_q,\chi,\chi^{\nu},1,1)$ or $\dH^k(\Z_q,\chi^{\nu},\chi,1,1)$ for $k\in\{1,2,3,4,8\}$. It is clear that $\dH^k(\Z_q,\chi,\chi^{\nu},1,1)\cong \dH^k(\Z_q,\chi^{\nu},\chi,1,1)$ for $k\in\{1,4,8\}$ by exchanging $x$ with $y$. Furthermore, if $\dH\cong\dH^8(\Z_q,\chi^{\nu},\chi,1,1)$, then from the conditions: $\chi^{p\nu}=\chi$, $\chi^p=\chi^{\nu}$ and $\chi^{\nu}\neq\chi$, we see that $q\nmid p-1$, $q\mid p+1$ and $\nu=q-1$.  Hence we obtain seven classes, which are described in $(15)$-$(21)$.

     Similar to Case $(2)$, the Hopf algebras described in $(15)$-$(21)$ are pairwise non-isomorphic, except for the following two cases:
     \begin{itemize}
       \item the Hopf algebras described in $(16)$ and $(17)$ are isomorphic if and only if $\nu=q-1$. Indeed, if there is a Hopf algebra isomorphism $\phi$  then by Proposition \ref{pro:R11-4.3.3}, there are  $\alpha_1,\alpha_2,\beta_1,\beta_2$ such that
         \begin{align*}
         \phi(x)=\alpha_1x^{\prime}+\alpha_2y^{\prime},\quad \phi(y)=\beta_1x^{\prime}+\beta_2y^{\prime}.
         \end{align*}
         Applying $\phi$ to the relations $x^p-x=0,y^p=0,[x,y]=0$, it follows that $\alpha_1=\beta_2=0$~and~$\alpha_2^p-\alpha_2=0$. Then applying $\phi$ to the rest of the defining relations, we have $\xi^{\nu^2-1}=1$, that is, $\nu=q-1$. If $\nu=q-1$, then by swapping~$x$~and~$y$, the Hopf algebras described in $(16)$ and $(17)$ are isomorphic.
       \item if $q\mid p-\nu$~and~$q\mid p\nu-1$, the Hopf algebras described in $(18)$ and $(19)$ are isomorphic if and only if  $\nu= q-1$.
     \end{itemize}

\textbf{Case (4).} By Proposition \ref{pro:liftings-dimP(dH)=1}, $H\cong\dH^k(\Z_q,\epsilon,\epsilon,g,1)$ for $k\in\I_{1,7}-\{3,5,7\}$ or $H\cong\dH^k(\Z_q,\epsilon,\epsilon,1,g)$ for $k\in\I_{1,7}-\{3\}$. It is clear that $\dH^k(\Z_q,\epsilon,\chi,g,1)\cong \dH^k(\Z_q,\chi,\epsilon,1,g)$ for $k\in\{1,4,6\}$ by exchanging $x$ with $y$. Hence we obtain seven classes, which are described in $(22)$--$(28)$.

It is clear that Hopf algebras described in $(23)$--$(24)$ are pairwise non-isomorphic. Then by Proposition \ref{pro:non-isomorphic-rank-2}, the Hopf algebras described in $(22)$--$(28)$ are pairwise non-isomorphic.

\textbf{Case (5).}
Suppose that $\nu=1$. Then by Proposition \ref{pro:liftings-dimP1g=2}, $H\cong\dH^k(\Z_q,\epsilon,\epsilon,g,g)$ for $k\in\I_{1,6}-\{5\}$, which are described in $(29)$-$(33)$.

Suppose that $\nu\neq  1$. Then  by Proposition \ref{pro:liftings-dimP1g=1}, $H\cong\dH^k(\Z_q,\epsilon,\epsilon,g,g^{\nu})$ or $H\cong\dH^k(\Z_q,\epsilon,\epsilon,g^{\nu},g)$ for $k\in\I_{1,8}-\{5,7\}$. It is clear that $\dH^k(\Z_q,\epsilon,\epsilon,g,g^{\nu})\cong \dH^k(\Z_q,\epsilon,\epsilon,g^{\nu},g)$ for $k\in\{1,4,6,8\}$ by exchanging $x$ with $y$. Furthermore, if $\dH\cong\dH^8(\Z_q,\epsilon,\epsilon,g,g^{\nu})$, then from the conditions: $g^{p\nu}=g$, $g^p=g^{\nu}$ and $g^{\nu}\neq g$, we see that $q\nmid p-1$, $\nu=q-1$ and $q\mid p+1$. Hence we obtain the classes described in $(34)$--$(41)$.

 Following the same lines of the last case,   one can show that the Hopf algebras described in $(35)$ and $(36)$ are isomorphic if $\nu=q-1$; if $q\mid p-\nu$ and $q\mid p\nu-1$, then the Hopf algebras described in $(37)$ and $(38)$ are isomorphic if and only if $\nu= q-1$; otherwise the Hopf algebras described in $(34)$--$(41)$ are pairwise non-isomorphic.
 \end{proof}
\begin{rmk}
In Proposition \ref{pro:p2q-rank-2-liftings}, the Hopf algebras described in $(26)$--$(27)$, $(33)$ and $(40)$ are noncommutative and noncocommutative, which constitute new examples of non-commutative non-cocommutative pointed Hopf algebras. By Remark \ref{rmk:dHk-1-5-Radfordbiproduct}, the rest are Radford biproducts of restricted universal enveloping algebras of dimension $p^2$ by $\K[\Z_q]$.
\end{rmk}

\begin{pro}\label{pro:p2q-2}
Let $H$ be a pointed Hopf algebra over $\K$ of dimension $p^2q$ whose diagram $R$ is not a Nichols algebra. Then $H$ is isomorphic to one of the following Hopf algebras
\begin{itemize}
  \item[(1) ]$ \K[\Z_q]\otimes \K[x,y]/(x^p,y^p)$, with $g=1$;
  \item[(2) ]$ \K[\Z_q]\otimes \K[x,y]/(x^p,y^p-x)$, with $g=1$;
  \item[(3) ]$ \K[\Z_q]\otimes \K[x,y]/(x^p-x,y^p-y)$, with $g=1$;
  \item[(4) ]$\K\langle h,x,y\rangle/(h^q-1, hx-\xi xh, hy-\xi^pyh,[x,y], x^p,y^p)$, with $g=1$;
  \item[(5) ]$\K\langle h,x,y\rangle/(h^q-1, hx-\xi xh, hy-\xi^pyh,[x,y], x^p,y^p-x)$, $q\mid p-1$, with $g=1$;
  \item[(6) ]$\K\langle h,x,y\rangle/(h^q-1, hx-\xi xh, hy-\xi^pyh,[x,y], x^p-x,y^p-y)$, $q\mid p-1$, with $g=1$;

  \item[(7) ]$\K[h,x,y]/(h^q-1,x^p,y^p)$, with $g=h$;
  \item[(8) ]$\K[h,x,y]/(h^q-1,x^p,y^p-x)$, $q\mid p-1$, with $g=h$;
   \item[(9) ]$\K[h,x,y]/(h^q-1,x^p-x,y^p-y)$, $q\mid p-1$, with $g=h$;
  \item[(10) ]$\K\langle h,x,y\rangle/(h^q-1, [h,x], [h,y], [x,y]-1+h^{p+1},x^p, y^p+(1-h^{p+1})^{p-1}x-x)$, $q\mid p-1$, $q\nmid p+1$, with $g=h$;
 \end{itemize}
  where $\xi$ is a primitive $q$th root of unity, $h\in\G(H)$, $x\in\Pp_{1,g}(H)$ and $\Delta(y)=y\otimes 1+g^p\otimes y+\omega_g(x)$.

\end{pro}
\begin{proof}
By assumption and Lemma \ref{lem:p2q-H0-1}, $\G(H)\cong \Z_q$ with generator $h$. Then $\dim\BN(R(1))=p$ and hence $\dim R(1)=1$. Let $V:=R(1)=\K\{x\}$.
   Then $\K\{x\}\in{}^{\G(H)}_{\G(H)}\mathcal{YD}$ by
 \begin{itemize}
 \item $x\in V^{\chi^i}_{h^j}$ for $i,j\in\I_{0,q-1}$, where $\chi\in \widehat{\G}$ satisfying $\chi(h)=\xi$.
 \end{itemize}
 Hence $\xi^{ij}=1$, that is, $q\mid ij$, which implies that $i=0$ or $j=0$. Up to change the character $\chi$ and the element $h$
  we may restrict to consider the cases: $(i,j)=(0,0),(1,0),(0,1)$.
Then by Theorem \ref{thm:H-group-algebra-of-dim-m}, $H$ is isomorphic to $\fH_{\Z_q}^k(1,\epsilon)$, $\fH_{\Z_q}^k(1,\chi)$ for $k\in\I_{1,3}$ or $\fH_{\Z_q}^k(g,\epsilon)$ for $k\in\I_{1,4}$.
\end{proof}
\begin{rmk}
By Propositions \ref{pro:non-isomorphic}, \ref{pro:isomorohism} and  \ref{pro:p2q-2},  there are 10 isomorphism classes of pointed Hopf algebras of dimension $p^2q$  whose  diagrams are not Nichols algebras. By Remark \ref{rmk:fHk-1-3-Radfordbiproduct}, the classes described in $(1)$-$(9)$ are Radford biproducts of connected Hopf algebras of dimension $p^2$ \cite{W1} by $\K[\Z_q]$; the class  described in $(10)$ is the unique non-commutative non-cocommutative pointed Hopf algebra of dimension $p^2q$ whose diagram is not a Nichols algebra, which constitute new examples of non-commutative non-cocommutative pointed Hopf algebras.
\end{rmk}

\begin{thm}\label{thm:dimp2q-completeclassification}
Let $H$ be a pointed Hopf algebra over $\K$ of dimension $p^2q$. If $H$ is generated by group-like elements and skew-primitive elements, then $H$ is one of the algebras listed in Propositions \ref{pro:p2q-1} and \ref{pro:p2q-rank-2-liftings}; otherwise $H$ is one of the algebras listed in Proposition \ref{pro:p2q-2}.
\end{thm}
\begin{proof}
By Lemma \ref{lem:p2q-H0-1}, the diagram of $H$ has dimension $p$ or $p^2$. Therefore, it follows by Propositions \ref{pro:p2q-1}, \ref{pro:p2q-rank-2-liftings} and \ref{pro:p2q-2}.
\end{proof}
\subsection{On pointed Hopf algebras of dimension $p^2m$}
Let $m$ be square-free and $p\nmid m$. Now we first give a classification of pointed Hopf algebras of dimension $p^2m$ with abelian coradicals whose diagrams are Nichols algebras.
\begin{lem}\label{lem:p2m-H0-2}
Suppose that $\Char\K=p$, $m$ is square-free and $p\nmid m$. Let $H$ be a pointed Hopf algebra over $\K$  of dimension $p^2m$ whose diagram is a Nichols algebra. Then  $\dim R=p$ or $p^2$. Furthermore,
\begin{itemize}
  \item [(1)] If $\dim R=p$, then $R\cong\K[x]/(x^p)$ with $x\in R(1)_g^{\chi}$ satisfying $\chi(g)=1$ for some $g\in\mathcal{Z}(\G(H))$ and $\chi\in\widehat{\G(H)}$;
  \item [(2)] If $\dim R=p^2$, then $R\cong\BN(V,\D)$ for some QPYD-datum $\D:=\D(\G(H),\chi_1,\chi_2,g_1,g_2)$.
\end{itemize}

\end{lem}
\begin{proof}
We first claim that $p\mid\dim R$. Let $\dim H_0=p^kn$ with $p\nmid n$ for $k\in\I_{0,2}$. Then $\dim R=p^{2-k}m/n$. By assumption, there is an element $x\in R(1)_g^{\chi}$ for some YD-pair $(g,\chi)$ such that $c(x\otimes x)=\chi(g)x\otimes x$. Then $\BN(\K\{x\})$ is a Hopf subalgebra of $R$ in ${}_{\G(H)}^{\G(H)}\mathcal{YD}$ and hence $\dim\BN(\K\{x\})\mid \dim R$.  Suppose that $\chi(g)\neq 1$. Then $\chi(g)=\xi^{i}$ for $i\in\I_{1,n-1}$, where $\xi$ is a primitive $n$th root of unity and hence $\ord(\xi^i)\mid n$, which implies that $\ord(\xi^i)\nmid m/n$. On the other hand, $\dim\BN(\K\{x\})=\ord(\xi^i)\mid m/n$, a contradiction. Consequently, $\chi(g)=1$ and hence $p=\dim\BN(\K\{x\})\mid \dim R$.

Since  $p\mid\dim R$, it follows that the order of $\G(H)$ is square-free and  $\G(H)\cong\Z_{p^{k}n}$ for $k\in\I_{0,1}$.

 We claim that $p^2\mid\dim R$ when $\dim R\neq p$. Assume that $\dim R\neq p$. If $\K\{x\}$ is  a subobject of some non-simple indecomposable object $\mathcal{V}$, then  by \cite[Example 2.2]{X23} $p^2\mid\dim R$, which implies that $p\nmid\dim H_0$ and so ${}_{H_0}^{H_0}\mathcal{YD}$ are semisimple, a contradiction. Then there is an element $y\in R(1)-\K\{x\}$  such that $y\in R(1)_{g'}^{\chi'}$. Furthermore, $\chi'(g')=1$. Let $W:=\K\{x,y\}$. Then
$\BN(W)$ is of diagonal type whose braiding $(q_{i,j})_{i,j\in\I_{1,2}}$ is given by $q_{i,i}=1$. Then by  \cite[Theorem 5.1]{WH}, $\BN(W)$ is finite-dimensional if and only if $q_{1,2}q_{2,1}=1$. Hence $\BN(W)=\K[x,y]/(x^p,y^p)$ and then $p^2\mid\dim R$.

If $\dim R\neq p$, then $p^2\mid\dim R$ and hence $\G(H)\cong\Z_n$. Similarly, if $\dim R\not\in\{p,p^2\}$, then $p^3\mid\dim R$, a contradiction. Consequently, $\dim R=p$ or $p^2$.
\end{proof}

\begin{lem}\label{lem:dimH0-dimp2m-nonN}
Suppose that $\Char\K=p$, $m$ is square-free and $p\nmid m$. Let $H$ be a pointed Hopf algebra over $\K$  of dimension $p^2m$ whose diagram is not a Nichols algebra. Then  $\dim R=p^2$.
\end{lem}
\begin{proof}
By Lemma \ref{lem:p2m-H0-2}, $\BN(R(1))\cong \K[x]/(x^p)$, where $R(1):=\K\{x\}$.  Then by Corollary \ref{cor:p2dividedimR}, $p^2\mid\dim R$.
If $\dim R\neq p^2$, then by \cite[Lemma 3.2]{X22}, there are a braided Hopf subalgebra of dimension $p^3$, which is isomorphic to $\K[x,y,x]/(x^p,y^p,z^p)$ with
\begin{align*}
\Delta_R(x)&=x\otimes 1+1\otimes x,\quad
\Delta_R(y)=y\otimes 1+1\otimes y+\omega_0(x),\\
\Delta_R(z)&=z\otimes 1+1\otimes z+\omega_0(x)(y\otimes 1+1\otimes y)^{p-1}+\omega_0(y),
\end{align*}
which implies that $p^3\mid\dim R$, a contradiction. Consequently, we have $\dim R=p^2$.
\end{proof}
\begin{thm}\label{thm:p2m-withabeliancoradical}
Suppose that $\Char\K=p$, $m$ is square-free and $p\nmid m$. Let $H$ be a pointed Hopf algebra over $\K$  of dimension $p^2m$ with abelian coradical.  Then $H$ is isomorphic to one of the following Hopf algebras:
\begin{itemize}
\item[(i)] $\fA^k_{\G(H)}(g,\chi,f)$ for $k\in\I_{1,2}$ and $\fA^3_{\G(H)}(g,f)$;
\item[(ii)]$\dH^k(\D)$ for $k\in\I_{1,8}$;
\item[(iii)] $\fH^k_{\G(H)}(g,\chi)$ for $k\in\I_{1,4}$.
\end{itemize}
Furthermore,
\begin{itemize}
\item $\fA_G^1(g,\chi,f)\cong\fA_G^1(g',\chi',f')$  if and only if there exists $F\in\Aut(G)$ such that $F(g)=g'$,  $\chi\cdot F^{-1}=\chi'$ and $\alpha f'F-f+\beta(\chi-\epsilon)=0$ for some $\alpha\in\K^{\times},\beta\in\K$ satisfying $\beta(1-g^p)=0$.
\item $\fA_G^2(g,\chi,f)\cong\fA_G^2(g',\chi',f')$   if and only if there exists $F\in\Aut(G)$ such that $F(g)=g'$,  $\chi\cdot F^{-1}=\chi'$ and $\alpha f'F-f+\beta(\chi-\epsilon)=0$ for some $\alpha\in\K^{\times},\beta\in\K$ satisfying $\alpha^p=\alpha$ and $(\beta^p-\beta)(1-g)=0$.
    \item $\fA_{\G(H)}^3(g,\eta)\cong\fA_{\G(H)}^3(g',\eta')$ if and only if there exists $F\in\Aut(\G(H))$ such that $F(g)=g'$ and $f\cdot F^{-1}=f'$;
    \item  $\dH^k(\D)\cong\dH^k(\D')$ for $k\in\I_{1,8}$  if and only if  there are $F\in\Aut(\G(H))$ and $\sigma\in S_2$ such that $F(g_{\sigma(i)})=g_i^{\prime}$ and $\chi_{\sigma(i)}\cdot F^{-1}=\chi_{i}^{\prime}$;
    \item $\fH_{\G(H)}^i(g,\chi)\cong\fH_{\G(H)}^i(g',\chi')$ for $i\in\I_{1,4}$ if and only if there exists $F\in\Aut(\G(H))$ such that $F(g)=g'$ and $\chi\cdot F^{-1}=\chi'$.
\end{itemize}
\end{thm}
\begin{proof}
If the diagram is a Nichols algebra, then it follows by Lemma \ref{lem:p2m-H0-2}, Theorems \ref{thm:classification-rank-one}, \ref{thm:liftingsofdH}; otherwise it follows by Lemma \ref{lem:dimH0-dimp2m-nonN}  and Theorem \ref{thm:H-group-algebra-of-dim-m}.
\end{proof}

\section*{Acknowledgements}
The author is grateful to Prof. V. C. Nguyen for her suggestions and comments on earlier draft of this article and to Profs. Quanshui Wu and Xingting Wang so much for the  help and  encouragement during his visiting at the Shanghai Center for Mathematical Sciences. The author would like to thank  the referee for many helpful comments and suggestions that improved the exposition.

\end{document}